  \newcommand{\TODO}[1]{\@ifmtarg{#1}{\emph{\textcolor{red}{\textbf{TODO}}}~}{\textcolor{red}{ \emph{\textbf{TODO:}~#1~}}}}
  \newcommand{\FORUS}[1]{\@ifmtarg{#1}{\emph{\textcolor{blue}
  {\textbf{TODO}}}~}{\textcolor{blue}{ \emph{\textbf{FOR US:}~#1~}}}}
\numberwithin{equation}{section}
\theoremstyle{plain}%default
\newtheorem{thm}{Theorem}[section]
\newtheorem{lem}[thm]{Lemma}
\newtheorem{theorem*}{Theorem}[]
\newtheorem{prop}[thm]{Proposition}
\theoremstyle{definition}
\newtheorem{defi}[thm]{Definition}
\newtheorem{rmk}[thm]{Remark}
\theoremstyle{definition}
\newtheorem{example}[thm]{Example}
\theoremstyle{remark}
\newtheorem{question}{Question}
\newcommand{\R}{\mathbb{R}}
\newcommand{\bigslant}[2]{{\raisebox{.2em}{$#1$}\left/\raisebox{-.2em}{$#2$}\right.}}
\setlist[itemize]{labelindent=.6em, itemindent=1em, leftmargin=!, label=\textbullet}
\title{Sobolev sheaves on the plane}
\author{M'hammed OUDRANE}
\address {IRMAR (UMR 6625), Université de Rennes, Campus de Beaulieu,
35042 Rennes Cedex, France.}
\email{m-hammed.oudrane@univ-rennes.fr}
\keywords {
%{\textcolor{red}{to update},
Sobolev spaces, sheaf theory, o-minimal  geometry.}
\begin{document}

\begin{abstract}
In this paper, we show that for any integer $k \in \mathbb{N}$ there exists a Sobolev sheaf (in the sense of Lebeau) on any definable site of $\mathbb{R}^2$ that agrees with Sobolev spaces on cuspidal domains. We also provide a complete computation of the cohomology of these sheaves using the notion of good direction introduced by Valette. 
\end{abstract}
\maketitle

\vspace{1cm}
\section{Introduction}

Sheaves of functional spaces on the subanalytic topology (introduced by Kashiwara and Schapira in \cite{KS}) are important objects in algebraic analysis, which involves studying solutions of  $\mathcal{D}$-modules as a generalization of linear partial differential equations. The most famous example is the sheaf of tempered distributions on the subanalytic site of a complex manifold, introduced by Kashiwara \cite{KA} to provide an elegant solution to the Riemann-Hilbert problem. In this paper, our focus is on sheaves composed of Sobolev functions. For $s\in \mathbb{R}$, the presheaf of $\mathbb{C}$-vector spaces
$$U \subset \mathbb{R}^n \mapsto W^{s,2}(U)=\{F_{\mid U} \; : \; F \in W^{s,2} (\mathbb{R}^n)  \},$$
is not always a sheaf (as shown by Lebeau \cite{L}). This is related to the fact that if $U\subset \mathbb{R}^n$ is an open subanalytic set with a non-Lipschitz boundary $\partial U$, then the space $W^{s,2} (U)$ doesn't exhibit favorable properties. More precisely, it is well known that in this case, Sobolev functions on $U$ are not necessarily restrictions of Sobolev functions on $\mathbb{R}^n$, and this gives rise to various issues. The aim of this paper is to find for $s>0$ an optimal sheafification of Sobolev spaces $W^{s,2}$ on the definable site (of a fixed o-minimal structure). Optimal in the sense that for $U\subset \mathbb{R}^n$, the space $W^{s,2} (U)$ will be modified only if it is necessary. 

In \cite{L}, Lebeau proved that for any $s<0$, there exists an object $\mathcal{F}^s$ in the derived category of sheaves on the subanalytic topology of $\mathbb{R}^n$, such that for any open bounded subanalytic set $U\subset \mathbb{R}^n$ with Lipschitz boundary, the complex $\mathcal{F}^s (U)$ is concentrated in degree $0$ and equal to the classical Sobolev space $W^{s,2}(U)$. The proof relies on the linear subanalytic site introduced by Guillermou and Schapira in \cite{GS}.

For \(k \in \mathbb{N}\), we construct a sheaf \(\mathcal{F}^k\) of distributions on the definable site of \(\mathbb{R}^2\) such that, for any small open set \(U \subset \mathbb{R}^2\) (either an open \(L\)-regular cell or an open set that is locally \(L\)-regular near its boundary), we have
\begin{center}
 $\mathcal{F}^k (U)=W^{k,2}(U)$.
 \end{center} 
 In a more formal way, our main result in this paper will be:\\
 
 \textbf{Main result:} Let  $\mathcal{A}$ be an o-minimal structure  on the real field $(\mathbb{R} , + , \cdot)$. Then, for any $k\in \mathbb {N}$, there exists a sheaf $\mathcal{F}^k$ on the definable site (associated to $\mathcal{A}$) of $\mathbb {R}^2$ such that, for any $U\subset \mathbb {R} ^2$ open definable bounded L-regular cell, we have
 $\mathcal{F}^k (U)=W^{k,2}(U)$.
 Moreover, for any $U\subset \mathbb {R}^2$ open definable bounded and for any $j>1$, we have
\begin{center}
$H^j (U, \mathcal{F}^k)=0$. 
\end{center}

Additionally, if $U$ has no punctured disk singularities, then
\begin{center}
$H^j (U, \mathcal{F}^k)=\left\{
                        \begin{array}{ll}
                  \mathcal{F}^k (U) \; \; \; \; \; \;  \; \; \; \; if \;  j=0 \\
                        \{0\}  \; \; \; \; \; \; \; \; \; \; \; \; \; if \; j\geqslant 1. \\
                        \end{array}
                        \right.$
\end{center}

This sheaf is unique (thanks to L-regular decomposition (see \cite{Pa3})) and agrees with $W^{k,2}$ on domains with Lipschitz boundaries. The idea of the  construction is based on understanding the local obstructions for $W^{k,2}$ to be a sheaf. Note that again thanks to L-regular decomposition, for $s\in ]-\frac{1}{2} , \frac{1}{2}[$ the presheaf $U\mapsto W^{s,2}(U)$ is a sheaf (see Lebeau \cite{L}). The obstructions are present for  $s>0$ big enough to have embedding of $W^{s,2}$ into at least the space of continuous functions. In the two dimensional case, the construction is explicit because the Lipschitz structure of definable open subsets in $\mathbb{R}^2$ has an explicit classification. Computation
of the cohomology is less obvious and requires more technical work. \\

The paper is organized as follows:
\begin{itemize}
    \item \textbf{Section 2:} We recall the basic concepts of o-minimal structures that are necessary for the context of this paper.
    \item \textbf{Section 3:} We present the definitions of Sobolev spaces $W^{s,2}$ as introduced in \cite{L}, along with the classical Stein extension theorem (Theorem ~\ref{thm3.2}).
    \item \textbf{Section 4:} We provide the definitions of definable sites and sheaves on definable sites (after Kashiwara and Schapira \cite{KS}), followed by the discussion of the sheafification problem for Sobolev spaces.
    \item \textbf{Section 5:} We discuss the spaces $W^{s,2}$ for $s\in ]-\frac{1}{2} , \frac{1}{2}[$.  
    \item \textbf{Section 6:} Here, we define the presheaf $\mathcal{F}^k$ (for $k \in \mathbb{N}$) of Hilbert spaces on a fixed definable site of  $\mathbb{R}^2$ and subsequently prove its sheaf property.
    \item \textbf{Section 7:} This is a core section focusing on a complete cohomology computation, establishing $\mathcal{F}^k$ as a Sobolev sheaf.
    \item \textbf{Section 8:} We give a sufficient condition to extend our method to Sobolev spaces $W^{s,2}$ for $s\in \mathbb{R}$. Notably, this offers a categorical proof of Lebeau's result from \cite{L}, affirming the validity of the Mayer-Vietoris sequence on domains with Lipschitz boundaries.
    \item \textbf{Section 9:} Finally, we provide remarks and insights concerning challenges in higher dimensions and the case of Sobolev spaces with fractional degrees of differentiability $s \in \R$.
\end{itemize}
\vspace{0.2cm}

\textbf{Acknowledgment.} The author is very grateful to Adam Parusi\'nski and Armin Rainer for their help and support, and the long hours of discussion they 
devoted to the author during the preparation of this work. The author extends warm and profound thanks to Georges Comte and  Guillaume Valette for reading this manuscript, and for the valuable comments, remarks, and suggestions. Part of the work has been done at University of Vienna, where the author’s research was funded by the Austrian Science Fund (FWF) Project P 32905-N. I am very grateful for the kind hospitality and the excellent working conditions. The author sincerely thanks the referees for their valuable comments, helpful corrections, and for providing Example 6.4.

\section{Definitions and Preliminaries}

\subsection{Notations:}
\begin{itemize}

\item $\mathcal{P}(X)$ is the set of subsets of $X$.
\item For a finite set $X$,  $\sharp X$ is the number of elements of $X$.
\item $B(v,r)$ represents the open ball with radius $r$ and center $v$, and $\overline{B}(v,r)$ represents the closed ball with radius $r$ and center $v$. Alternatively, notations $B_r(v)$ and $\overline{B}_r(v)$ might be used. 
\item $C(v,r)$ represents the sphere with radius $r$ and center $v$, i.e.,
$$C(v,r)=\overline{B}_r(v) \setminus B_r(v)=\{x\in \mathbb{R}^n \; : \; d(x,v)=r  \}.$$
\item For a definable set $X\subset \mathbb{R}^n$, $X^{reg}$ is the set of points $x\in X$ where $X$ is a $C^{1}$ manifold nearby $x$.
\item For $v\in \mathbb{R}^{n-1}$, $\pi_{v}:\mathbb{R}^{n}\longrightarrow \mathbb{R}^{n-1}$ is the linear projection parallel to $Vect((v,1))$.
\item For a set $A\subset \mathbb{R}^{n}\times \mathbb{R}^{m}$ and $x_{0}\in \mathbb{R}^{n}$, we denote by $A_{x_{0}}$ the set
\begin{center}
 $A_{x_{0}}=\{y\in \mathbb{R}^{m} \; : \; (x_{0},y)\in A \}$.
 \end{center} 
\item $\overline{A}$ refers to the topological closure of $A$.
\item For a set $U \subset \mathbb{R}^{n}$, $\partial U$ represents the boundary of $U$, i.e., $\partial U=\overline{U}\setminus U$. 
\item For an open set $U \subset \mathbb{R}^2$, a point $x \in \mathbb{R}^2$, and $r > 0$, we denote by $U_r(x)$ the open set $U \cap B(x, r)$.
\item $\mathbb{N}$ denotes the set of nonnegative integers.
\item For a map $f:A \to B$, $\Gamma_f$ denotes the graph of $f$.
\item For two functions $f:A\to [0,+\infty [$ and $g:A\to [0,+\infty [$, we write $f \lesssim g$ if there is $C>0$ such that $f(x) \leqslant C g(x)$ for all $x\in A$. 
\item For two functions $f:A\to \mathbb{R}$ and $g:A\to \mathbb{R}$ with $f<g$, $\Gamma (A,f,g)$ (or simply $\Gamma (f,g)$) denotes the set:
$$\Gamma (A,f,g) = \{(x,y)\in A\times \mathbb{R} \; : \; f(x)<y<g(x)  \}.$$
\item If $u,v\in \mathbb{R}^2 \setminus \{ 0 \}$, $\angle (u,v)$ represents the angle between $u$ and $v$ with respect to the anticlockwise orientation.
\item For $U\subset \mathbb{R}^n$ open, $\mathcal{D}(U)$ represents the topological vector space of $C^\infty$ functions with compact support in $U$, and $\mathcal{D}'(U)$ represents the space of continuous linear forms on $\mathcal{D}(U)$. 
\item $H^j(X,\mathcal{F})$ denotes the $j$-th cohomology group of the sheaf $\mathcal{F}$ on the topological space $X$.
\item If $\mathcal{A}$ is an o-minimal structure on the real field $(\mathbb{R}, +, \cdot)$, then $X_{\mathcal{A}}(\mathbb{R}^n)$ represents the site on $\mathbb{R}^n$ where open sets are open bounded definable (in $\mathcal{A}$) subsets of $\mathbb{R}^n$, and coverings are finite. $D^+(X_{\mathcal{A}}(\mathbb{R}^n))$ denotes the derived category of bounded below complexes of sheaves on the site $X_{\mathcal{A}}(\mathbb{R}^n)$. If $\mathcal{A}$ is the structure of globally subanalytic sets, then $X_{sa}(\mathbb{R}^n)$ is used instead of $X_{\mathcal{A}}(\mathbb{R}^n)$.

\end{itemize}

\subsection{O-minimal structures}
An o-minimal structure on the field $(\mathbb{R},+,\cdot)$ is a sequence $\mathcal{A}=(\mathcal{A}_{n})_{n\in \mathbb{N} }$ such that for any $n$, we have:
\begin{itemize}

\item   $\mathcal{A}_{n} $ is a Boolean subalgebra of $ \mathcal{P}(\mathbb{R}^{n})$.
\item   $\mathcal{A}_{n} $ contains all the real algebraic subsets of $\mathbb {R}^n$.
\item   $\pi (\mathcal{A} _{n})\subset \mathcal{A}_{n-1}$, where $\pi : \mathbb{R}^{n} \longrightarrow \mathbb{R}^{n-1}$ is the standard projection.
\item For all  $(n,m)\in \mathbb {N} ^2$: $\mathcal{A}_{n} \times \mathcal{A}_{m} \subset \mathcal{A}_{n+m}$.
\item   For any $A\in \mathcal{A}_{1} $, $A$ is a finite union of points and intervals.
\end{itemize}
For a fixed o-minimal structure $\mathcal{A}$:
\begin{itemize}
\item Elements of $\mathcal{A}_{n}$ are called definable sets.
\item If $A\in \mathcal{A}_{n}$ and $B\in \mathcal{A}_{m}$, then a map $f:A\longrightarrow B$ is called a definable map if its graph is a definable set.

\end{itemize}
We refer to \cite{VD} for the fundamentals of o-minimal geometry.

\textbf{Cell decomposition:}

For a given positive integer $p$, a definable set $C$ in $\mathbb{R}^{n}$ is referred to as a $C^{p}$-cell if:

\begin{itemize}

\item[case $n=1$:] $C$ is either a point or an open interval.
\item[case $n\geq 2$:] $C$ is one of the following:

\begin{itemize}
\item[$\bullet$] $C=\Gamma_{\phi}$ (the graph of $\phi$), where $\phi:B\longrightarrow \mathbb{R}$ is a $C^{p}$ definable function, and $B$ is a $C^{p}$-cell in $\mathbb{R}^{n-1}$. 
\item[$\bullet$] $C=\Gamma(\phi , \varphi)=\{ (x,y) \in B\times \mathbb{R} \; : \; \phi(x)<y<\varphi(x)  \}$, where $\phi$ and $\varphi$ are two $C^{p}$ definable functions on a $C^{p}$-cell $B$, satisfying $\phi < \varphi$ with the possibility of $\phi=-\infty$ or $\varphi =+\infty$.

\end{itemize}

\end{itemize}

A $C^{p}$-cell decomposition of $\mathbb{R}^{n}$ is defined by induction as follows:

\begin{itemize}

\item[$\bullet$] A $C^{p}$-cell decomposition of $\mathbb{R}$ is a finite partition consisting of points and open intervals.
\item[$\bullet$] A $C^{p}$-cell decomposition of $\mathbb{R}^{n}$ is a finite partition $\mathcal{P}$ of $\mathbb{R}^{n}$ by $C^{p}$-cells. It is required that $\pi(\mathcal{P})$ is a $C^{p}$-cell decomposition of $\mathbb{R}^{n-1}$, where $\pi: \mathbb{R}^{n}\longrightarrow \mathbb{R}^{n-1}$ is the standard projection, and $\pi(\mathcal{P})$ is the family

\begin{center}
$\pi(\mathcal{P})=\{\pi(A) \; : \; A\in \mathcal{P} \}$.
\end{center}

\end{itemize}

\begin{thm}
Let $p\in \mathbb{N}$ and $\{ X_{1},...,X_{n} \}$ be a finite family of definable sets of $\mathbb{R}^{n}$. Then there is a $C^{p}$-cell decomposition of $\mathbb{R}^{n}$ compatible with this family, i.e. each $X_{i}$ is a union of some cells.
\end{thm}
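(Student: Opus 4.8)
The plan is to prove the statement by induction on the ambient dimension $n$, carried out simultaneously with the companion ``functional'' statement

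$(\star_n)$: \emph{for every definable function $f\colon A\to\mathbb{R}$ with $A\subseteq\mathbb{R}^n$ definable there is a $C^{p}$-cell decomposition of $\mathbb{R}^n$ compatible with $A$ such that $f|_C$ is $C^{p}$ on every cell $C\subseteq A$.}

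Write $(CD_n)$ for the assertion of the theorem in dimension $n$. I would set up the bookkeeping so that $(CD_1)$ and $(\star_1)$ form the base case, and then establish the two implications $(\star_{n-1})\Rightarrow(CD_n)$ and $\bigl((CD_n)\text{ and }(\star_{n-1})\bigr)\Rightarrow(\star_n)$. The base case $(CD_1)$ is immediate from the o-minimality axiom (a definable subset of $\mathbb{R}$ is a finite union of points and intervals): partition $\mathbb{R}$ by the finitely many endpoints occurring among the $X_i$. For $(\star_1)$ one invokes the ($C^{p}$-)monotonicity theorem --- part of the same inductive package --- which splits the domain into finitely many subintervals on each of which $f$ is $C^{p}$ and either strictly monotone or constant.

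\textbf{Set step $(\star_{n-1})\Rightarrow(CD_n)$.} Write points of $\mathbb{R}^n$ as $(x',x_n)$ with $\pi(x',x_n)=x'\in\mathbb{R}^{n-1}$. For each $x'$, every fiber $(X_i)_{x'}\subseteq\mathbb{R}$ is a finite union of points and open intervals, hence described by its finite ordered tuple of endpoints. By o-minimality the number of endpoints is uniformly bounded and, for each value of this ``combinatorial type'', the set of $x'$ realizing it is definable and on it the $j$-th endpoint is a definable function $\xi_j(x')$ with $\xi_1<\xi_2<\cdots$. Apply $(CD_{n-1})$ to the finite family of these type strata, then $(\star_{n-1})$ to the finitely many functions $\xi_j$, obtaining a $C^{p}$-cell decomposition $\mathcal{D}_{n-1}$ of $\mathbb{R}^{n-1}$ over each cell $B$ of which all relevant $\xi_j$ are defined, $C^{p}$, and strictly ordered, and over which each $(X_i)_{x'}$ has constant type. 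Over a given $B$, the graphs $\Gamma_{\xi_j}$ and the bands $\Gamma(B,\xi_j,\xi_{j+1})$ (including the unbounded ones) are $C^{p}$-cells that partition $\pi^{-1}(B)$ and are each contained in or disjoint from each $X_i$. Gathering these over all $B\in\mathcal{D}_{n-1}$ gives a partition of $\mathbb{R}^n$ that projects onto $\mathcal{D}_{n-1}$, hence is a $C^{p}$-cell decomposition, and it is compatible with $\{X_1,\dots,X_n\}$.

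\textbf{Function step.} Given $f\colon A\to\mathbb{R}$, first use $(CD_n)$ to reduce to $A$ a single cell $C$. If $C=\Gamma_\phi$ is a graph over a cell $B\subseteq\mathbb{R}^{n-1}$, then $f$ descends to a definable function on $B$ and one concludes by $(CD_{n-1})$ and $(\star_{n-1})$, lifting the decomposition of $B$ back up. The substantive case is a band $C=\Gamma(B,\phi,\varphi)$: (i) freezing $x'$ and applying ($C^{p}$-)monotonicity in $x_n$, the breakpoints of $x_n\mapsto f(x',x_n)$ are finitely many definable functions of $x'$; after refining via $(CD_{n-1})$ and $(\star_{n-1})$ one may assume $x_n\mapsto f(x',x_n)$ is $C^{p}$ and strictly monotone or constant on each remaining band; (ii) on such a band one must upgrade this, together with the smoothness in the $x'$-directions coming from the inductive hypothesis, to joint $C^{p}$-regularity. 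Here one uses the elementary fact that a function on an open set all of whose first partial derivatives exist and are continuous is $C^{1}$ (then bootstraps in $p$), noting that the partial derivatives are themselves definable; the definable locus where some partial fails to exist or to be continuous has dimension $<n$, so by $(CD_n)$ applied to $C$ together with this bad locus, and an induction on dimension, it is absorbed into lower-dimensional cells on which $f$ is automatically $C^{p}$.

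\textbf{Main obstacle.} The delicate point is step (ii) of the band case: passing from separate $C^{p}$-smoothness in the $x_n$-direction and in the $x'$-directions to joint $C^{p}$-smoothness, while controlling the lower-dimensional exceptional locus --- this is where the monotonicity theorem, the functional statement $(\star_n)$, and the dimension theory of definable sets (a proper definable subset of a cell has strictly smaller dimension) all have to be interlocked into a single induction. This argument, and the remaining routine bookkeeping in assembling the cells compatibly, is carried out in detail in Chapter 3 of \cite{VD}.
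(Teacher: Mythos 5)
Your outline is the standard double-induction proof (cell decomposition interleaved with piecewise $C^{p}$-smoothness of definable functions, resting on the monotonicity theorem and the uniform finiteness of fibers), which is precisely the argument of Chapter 3 of \cite{VD} and of \cite{M} that the paper itself cites in lieu of a proof. So the proposal is correct and takes essentially the same approach as the paper's (referenced) proof; the only cosmetic difference is that you fold the uniform-finiteness statement into the induction informally rather than carrying it as a separate clause as in \cite{VD}.
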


\begin{proof}
See \cite{M} or \cite{VD}.
\end{proof} 
Now we can define the dimension of a definable set.
Take $X$ a definable subset of $\mathbb{R}^{n}$ and $\mathcal{C}$ a cell decomposition of $\mathbb{R}^{n}$ compatible with $X$, then we define the dimension 

\begin{center}
$dim_{\mathcal{C}}(X)=max\{ dim(C) \; : \; C\subset X \; and \; C\in \mathcal{C} \}$.
\end{center}
This number does not depend on $\mathcal{C}$, we denote it by $dim(X)$.\\

\vspace{0.1cm}

Throughout the text, we assume $\mathcal{A}$ is an o-minimal structure on $(\mathbb{R},+,.)$.
\vspace{0.1cm}
\subsection{L-regular decomposition}
L-regular cells (Lipschitz cells) were introduced by A. Parusi\'nski to establish the existence of Lipschitz stratification for subanalytic sets (\cite{Pa3}, see also \cite{K}).
\begin{defi}
Let $X\subset \mathbb{R}^n$ be a definable subset. We say that $X$ is L-regular if:
\begin{itemize}
\item[$\bullet$] $X$ is a point if $\dim(X)=0$.
\item[$\bullet$] $X$ is an open interval if $\dim(X)=1$ and $n=1$.
\item[$\bullet$] If $\dim(X)=n$ (with $n>1$), then there exists $X'\subset \mathbb{R}^{n-1}$ that is L-regular, along with two $C^1$ definable functions with bounded derivatives $\phi _1 , \phi _2 :X' \longrightarrow \mathbb{R}$ where $\phi _1 <\phi _2 $, satisfying  
$$X=\{ (x',x_n)\in X' \times \mathbb{R} \; : \; \phi _1 (x')< x_n < \phi _2 (x')  \} .$$
\item[$\bullet$] If $\dim(X)=k<n$, then $X$ is the graph of a $C^1$ definable map
$ \phi : X' \longrightarrow \mathbb{R}^{n-k}$ with bounded derivatives on $Int(X')$, where $X' \subset \mathbb{R}^k$
 is L-regular and of dimension $k$.
 
\end{itemize}

\end{defi}
We will also say that $A$ is L-regular if it becomes so after a linear change of coordinates.

\begin{figure}[H]
  \centering
  \includegraphics[scale=0.4]{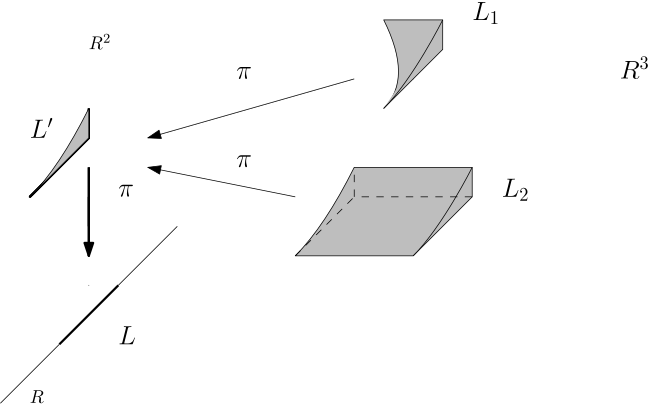}
  \caption{Example of building L-regular cells by induction.}   
\end{figure}

\begin{thm}\label{L-reg-decomposition}
Let $X_1, \ldots, X_l$ be definable subsets of $\mathbb{R}^n$. Then, there exists a finite definable partition $(L_k)_k$ of $\bigcup _i X_i$ that is compatible with each $X_i$, and each element $L_k$ is $L$-regular.

\begin{proof}
See \cite{Pa3} or \cite{K}.
\end{proof}

\end{thm}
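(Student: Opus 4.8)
The plan is to argue by induction on the ambient dimension $n$, reducing to a single $C^1$-cell and then repairing the derivative bounds of its defining functions by choosing a better projection on suitable pieces. For $n=1$ there is nothing to do: a definable subset of $\mathbb{R}$ is a finite union of points and open intervals, and these are L-regular by definition. Assume the statement in $\mathbb{R}^{n-1}$. Given $X_1,\dots,X_l\subset\mathbb{R}^n$, first apply the $C^1$-cell decomposition theorem to get a finite partition of $\mathbb{R}^n$ into $C^1$-cells compatible with the family; since the cells are pairwise disjoint it suffices to partition each cell $C$ separately into L-regular cells. Such a $C$ is either a graph $\Gamma_\phi$ or a band $\Gamma(\phi_1,\phi_2)$ over a $C^1$-cell $B\subset\mathbb{R}^{n-1}$, with $\phi,\phi_i$ of class $C^1$ and definable. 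Running the inductive hypothesis on $B$ (applied to the family consisting of $B$ together with all the loci introduced below), we may assume $B$ is L-regular of the appropriate dimension; the only remaining defect of $C$ is that the defining functions need not have bounded derivatives.

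\textbf{The graph case.} Partition $B$ according to the index $j$ realizing $|\partial_j\phi|=\max_i|\partial_i\phi|$ and according to whether $\max_i|\partial_i\phi|\leqslant 1$ or $>1$. On $\{x\in B:\max_i|\partial_i\phi(x)|\leqslant 1\}$ the gradient is already bounded; this bound is preserved under any further refinement into L-regular subcells (it is a closed condition, stable by restriction), so by the inductive hypothesis those pieces are L-regular. On $\Omega_j=\{x\in B:|\partial_j\phi(x)|=\max_i|\partial_i\phi(x)|>1\}$, the implicit function theorem lets one rewrite $\Gamma_\phi\cap(\Omega_j\times\mathbb{R})$, after interchanging the coordinates $x_j$ and $x_n$, as the graph of a $C^1$ definable function $\psi$; by construction $|\partial_{x_n}\psi|=1/|\partial_j\phi|<1$ and $|\partial_{x_i}\psi|=|\partial_i\phi|/|\partial_j\phi|\leqslant 1$, so $\psi$ has bounded derivatives. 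Its base is the projection of this piece onto the new hyperplane, a definable subset of $\mathbb{R}^{n-1}$, which we make L-regular by the inductive hypothesis; since the definition of L-regularity allows a linear change of coordinates, the resulting pieces are L-regular. (For cells of codimension $>1$ one iterates this, cutting at each stage by the dominant partial derivatives of all the remaining component functions.)

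\textbf{The band case and the main obstacle.} A full-dimensional band cannot be re-projected as directly: one cannot in general bound the derivatives of $\phi_1,\phi_2$ by any finite partition of $B$ (think of $\phi_2(x)=1/x$ on $(0,1)$), and after swapping $x_j$ with $x_n$ the band condition $\phi_1<x_n<\phi_2$ need no longer define a band over the new hyperplane since the new vertical slices may be disconnected. The fix I would pursue is to partition $B$ so that on each piece \emph{either} $\phi_1$ and $\phi_2$ both have bounded gradients — whence the band is directly L-regular — \emph{or} a coordinate direction $e_j$ works in the sense that, after re-projecting the band piece onto the hyperplane orthogonal to $e_j$ and re-applying the $C^1$-cell decomposition in the new coordinates (to restore genuine bands), the new defining functions have bounded gradients; the example $\{0<x<\tfrac12,\ 2<y<1/x\}$, which becomes the band $\{y>2,\ 0<x<1/y\}$ over the $y$-axis, is the model case. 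The heart of the argument — and where I expect the real work to lie — is proving that this process terminates after finitely many re-projections, for which one needs the o-minimal finiteness of the relevant configurations (the behaviour of the Gauss map of the graphs $\Gamma_{\phi_i}$, controlled via monotonicity and curve selection), organized as an induction on a suitable complexity measure of the steepness locus; one must also carefully propagate uniformity, i.e. check that every auxiliary sub-decomposition invoked through the inductive hypothesis does not destroy the bounds already gained (which is why the induction must be carried in the strengthened form "compatible with a prescribed finite family", so that restrictions of the $\phi_i$ to lower-dimensional subcells are handled at the same time). Granting these points, the construction produces the desired finite partition of $\bigcup_i X_i$ into L-regular cells compatible with each $X_i$.
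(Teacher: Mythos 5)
The paper does not actually prove this theorem --- it defers to Parusi\'nski \cite{Pa3} and Kurdyka \cite{K} --- so the only question is whether your argument stands on its own, and as written it does not: you have left the genuinely hard part as a plan. Your graph case is sound in outline and is essentially the key lemma of \cite{K} for hypersurface graphs (one repair is still needed: after swapping $x_j$ and $x_n$ you must further refine $\Omega_j$ so that the projection along $e_j$ is injective on $\Gamma_\phi\cap(\Omega_j\times\mathbb{R})$; strict monotonicity of $\phi$ in $x_j$ gives injectivity only on each connected component of a slice of $\Omega_j$, and those slices need not be connected, so $\phi$ can take the same value on different components and the re-projected set fails to be a graph).

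The band case, however, is where the theorem lives, and there you write ``granting these points'' for precisely the point that constitutes the theorem: that your re-projection process terminates. An iteration of the form ``re-project, re-decompose into cells, repeat'' has no a priori reason to stop --- each re-projection produces new bands whose defining functions can again be steep in some other direction --- and you supply no decreasing complexity measure, only the expectation that one exists. The proofs in \cite{Pa3} and \cite{K} do not run such an iteration at all. They first decompose the \emph{boundary} hypersurfaces $\Gamma_{\phi_1},\Gamma_{\phi_2}$ into finitely many Lipschitz graphs by a compactness argument on the sphere of directions: the set of unit normals (the definable Gauss map image) is covered by finitely many caps fixed in advance, each cap yielding one linear change of coordinates valid on the corresponding piece; the open set is then carved into bands between consecutive such Lipschitz graphs over a common L-regular base supplied by the inductive hypothesis in $\mathbb{R}^{n-1}$. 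That single, a priori finite choice of directions is the idea that replaces your open-ended loop, and it is what is missing from your proposal. As it stands the proposal is an honest and well-oriented sketch, but not a proof.
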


\section{$L^2$-based Sobolov spaces revisited.}

For proofs and further details of the statements in this section, we refer to \cite{L} or \cite{J}. Let $n\in \mathbb{N}$. We denote:

\begin{itemize}
\item $\mathcal{S}(\mathbb{R}^n)$ as the space of Schwartz functions ($C^\infty$-functions that vanish at infinity along with all their derivatives, decaying faster than any polynomial).
\item $\mathcal{S}'(\mathbb{R}^n)$ as the topological dual of $\mathcal{S}(\mathbb{R}^n)$.
\end{itemize}
And we have natural continuous injections
$$\mathcal{S}(\mathbb{R}^n) \subset L^2(\mathbb{R}^n) \subset \mathcal{S}'(\mathbb{R}^n).$$

We recall the Fourier transform
\begin{center}
$u\in \mathcal{S}(\mathbb{R}^n) \mapsto \widehat{u} \in \mathcal{S}(\mathbb{R}^n)$,
\end{center}
where 
\begin{equation}
\widehat{u}(y)=\frac{1}{(2\pi)^{\frac{n}{2}}}\int_{\mathbb{R}^n} e^{-iy\cdot x}u(x)dx.
\end{equation}

By duality, the Fourier transform extends in a canonical way to $\mathcal{S}'(\mathbb{R}^n)$. Finally, for $s\in \mathbb{R}$, we recall the Sobolev space 
$$W^{s,2}(\mathbb{R}^n) =\{ u \in \mathcal{S}'(\mathbb{R}^n) \; : \;  \| u \| _{W^{s,2}(\mathbb{R}^n)}=\sqrt{\int _{\mathbb{R}^n} (1+\left| y \right|^2 )^s \left| \widehat{u}(y) \right|^2  dy } <+\infty    \},$$
with the natural dense inclusions (for $s\geqslant 0$)

\begin{center}
$\mathcal{D}(\mathbb{R}^n) \subset \mathcal{S}(\mathbb{R}^n) \subset L^2(\mathbb{R}^n) \subset W^{s,2}(\mathbb{R}^n) \subset \mathcal{S}'(\mathbb{R}^n)$.
\end{center}

An equivalent way to define $W^{s,2} (\mathbb{R}^n)$ is as follows:

\begin{itemize}
\item[$\bullet$] For $k\in \mathbb{N}$
$$W^{k,2} (\mathbb{R}^n)=\{f\in L^2 (\mathbb{R}^n) \; : \; \forall |\alpha| \leqslant k , \; \partial^\alpha f \in L^2 (\mathbb{R}^n)      \},$$
where $\partial^\alpha f$ denotes the distributional derivative of $f$ for $\alpha \in \mathbb{N}^n$.

\item[$\bullet$] For $s\in ]k,k+1[$ for some $k\in \mathbb{N}$, then $W^{s,2}$ is the interpolation space
$$W^{s,2} (\mathbb{R}^n)=[W^{k+1,2} (\mathbb{R}^n),W^{k,2} (\mathbb{R}^n)]_{s-k}.$$

\item[$\bullet$] For $s<0$, $W^{s,2} (\mathbb{R}^n)$ is the topological dual
\begin{center}
$W^{s,2} (\mathbb{R}^n)= (W^{-s,2} (\mathbb{R}^n))'$.
\end{center}
\end{itemize}

Let \(F \subset \mathbb{R}^n\) be a closed set. We define $W^{s,2}_F(\mathbb{R}^n)$ to be the closed subspace of \(W^{s,2}(\mathbb{R}^n)\) consisting of all
distributions whose support is contained in \(F\), equipped with the induced norm.

Take $s\geqslant0$ and $r=s-[s]$. It is classical that (we refer to \cite{L})  $f\in W^{s,2}(\mathbb{R}^n)$ if and only if $\partial^\alpha f \in L^2 (\mathbb{R}^n)$ for all $|\alpha| \leqslant [s]$ and (if $r>0$)
$$\frac{\partial^\alpha f(x) - \partial^\alpha f(y)}{|x-y|^{\frac{n}{2}+r}} \in L^2 (\mathbb{R}^n \times \mathbb{R}^n)$$
for all $|\alpha| = [s]$. The norm of $W^{s,2} (\mathbb{R}^n)$ has the following equivalence
\begin{equation}
\| f \| _{W^{s,2}(\mathbb{R}^n)} \approx \sum_{|\alpha| \leqslant [s]} \| \partial^\alpha f \|_{L^2 (\mathbb{R}^n)} +1_{r>0} \sum _{|\alpha| = [s]} \| \frac{\partial^\alpha f(x) - \partial^\alpha f(y)}{|x-y|^{\frac{n}{2}+r}} \|_{L^2(\mathbb{R}^n \times \mathbb{R}^n)}. 
\end{equation}

For $s\in \mathbb{R}$ and $U\subset \mathbb{R}^n$ open, we define the space (following Lebeau \cite{L})
\begin{equation}
W^{s,2} (U) = \{ f\in \mathcal{D}' (U) \; : \; \exists F\in W^{s,2}(\mathbb{R}^n) \; \text{such that} \; F_{\mid U}=f \}. 
\end{equation}
We equip $W^{s,2} (U)$ with the norm
$$\| f \|_{W^{s,2} (U)} = \inf \{\| F \|_{W^{s,2}(\mathbb{R}^n)} \; : \; F_{\mid U}=f  \}.$$

We have the quotient Hilbert structure on $W^{s,2}(U)$ induced by the natural isomorphism between $W^{s,2}(U)$ and
$$
\bigslant{W^{s,2}(\mathbb{R}^n)}{W^{s,2} _{\mathbb{R}^n \setminus U}(\mathbb{R}^n)}.
$$
Since $W^{s,2}_{\mathbb{R}^n \setminus U}(\mathbb{R}^n)$ is a closed subspace of the Hilbert space $W^{s,2}(\mathbb{R}^n)$, it is complemented by its orthogonal

\begin{center}
$W^{s,2}(\mathbb{R}^n) = W^{s,2}_{\mathbb{R}^n \setminus U}(\mathbb{R}^n) \oplus (W^{s,2}_{\mathbb{R}^n \setminus U}(\mathbb{R}^n))^{\perp}$.
\end{center}

This induces an extension operator $\mathcal{T}: W^{s,2}(U) \longrightarrow W^{s,2}(\mathbb{R}^n)$ given by
$$\mathcal{T}(f) = \text{Proj}_{(W^{s,2}_{\mathbb{R}^n \setminus U}(\mathbb{R}^n))^{\perp}}(F)$$
for any choice of $F\in W^{s,2}(\mathbb{R}^n)$ such that $F\mid_U = f$,  
where $$\text{Proj}_{(W^{s,2}_{\mathbb{R}^n \setminus U}(\mathbb{R}^n))^{\perp}}: W^{s,2}(\mathbb{R}^n) \rightarrow (W^{s,2}_{\mathbb{R}^n \setminus U}(\mathbb{R}^n))^{\perp}$$ is the orthogonal projection.

\vspace{0.3cm}

\textbf{The usual definition of Sobolev spaces:} In our definition, we follow \cite{L}. Note that the usual Sobolev spaces $W^{s,2}_\star$ (see Lions and Magenes \cite{J}) are defined as follows:

\begin{itemize}
\item[$\bullet$] If $k\in \mathbb{N}$, then
\begin{center}
$W^{k,2}_\star(U) = \{ f\in L^2(U) \; : \; \partial^\alpha f \in L^2(U) \text{ for all } |\alpha| \leq k \}$.
\end{center}

\item[$\bullet$] If $s\in ]k,k+1[$, then
\begin{center}
$W^{s,2}_\star(U) = [W^{k+1,2}_\star(U), W^{k,2}_\star(U)]_{s-k}$.
\end{center}
And we have
\begin{center}
$W^{s,2}_\star(U) = \{ f\in L^2(U) \; : \; \partial^\alpha f \in W^{s-k,2}_\star(U) \text{ for all } |\alpha| \leq k \}$.
\end{center}

\item[$\bullet$] For $s<0$, $W^{s,2}_\star(U)$ is defined to be the topological dual space of $W^{-s,2}_\star(U)$.
\end{itemize}

\begin{defi}
A bounded open set $U\subset \mathbb{R}^n$ is said to be Lipschitz (or with Lipschitz boundary) if and only if for any $q\in \overline{U}\setminus U$, there exists an orthogonal transformation $\phi : \mathbb{R}^n \rightarrow \mathbb{R}^n$ with $\phi (q)=0$, a Lipschitz function $f: \mathbb{R}^{n-1}\rightarrow \mathbb{R}$, and $r>0$ such that
\begin{center}
$\phi (U\cap B(q,r))=\{(y',y_n)\in B(0,r) \; : \; y_n>f(y') \}$.
\end{center}
\end{defi}

Thanks to the Stein extension theorem (along with the functoriality of interpolations, as discussed in Section 8), for a Lipschitz domain $U\subset \mathbb{R}^n$  and $s\geqslant 0$, we have

\begin{equation}
W^{s,2} (U)=W^{s,2} _{\star}(U).
\end{equation}

In fact, the Stein extension theorem provides even more (we refer to Stein \cite{ST}):

\begin{thm}\label{thm3.2} Take $U\subset \mathbb{R}^n$ open bounded with Lipschitz boundary. Then there is a  linear continuous extension operator $Ext:L^2(U)\to L^2 (\mathbb{R}^n)$ such that for $k\in \mathbb{N}$ the restriction of $Ext$ to $W^{k,2} _{\star} (U)$ induces a linear continuous operator
\begin{center}
$Ext _{W^{k,2} _{\star} (U)} : W^{k,2} _{\star} (U) \mapsto W^{k,2} (\mathbb{R}^n)$.
\end{center}

\end{thm}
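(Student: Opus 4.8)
The plan is to follow Stein's construction \cite{ST}: localize $U$ near its boundary so that it becomes the region above a Lipschitz graph, build an explicit extension operator in that model situation, and reassemble by a partition of unity.

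\textbf{Step 1 (localization).} Since $U$ is bounded with Lipschitz boundary, cover $\partial U$ by finitely many balls $B_1,\dots,B_N$ so that, after an orthogonal change of coordinates $R_j$ in each, $U\cap B_j=\Omega_j\cap B_j$ with $\Omega_j=\{(x',x_n)\;:\;x_n>\varphi_j(x')\}$ for some $M_j$-Lipschitz function $\varphi_j:\mathbb{R}^{n-1}\to\mathbb{R}$; add an open set $B_0$ with $\overline{B_0}\subset U$ so that $\overline U\subset\bigcup_{j=0}^N B_j$, and choose a $C^\infty$ partition of unity $\zeta_0,\dots,\zeta_N$ with $\operatorname{supp}\zeta_j\Subset B_j$ and $\sum_j\zeta_j\equiv 1$ on a neighbourhood of $\overline U$. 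Multiplication by $\zeta_j$ is bounded $L^2(U)\to L^2(U\cap B_j)$ and $W^{k,2}_\star(U)\to W^{k,2}_\star(U\cap B_j)$ for every $k$ (Leibniz rule, all derivatives of $\zeta_j$ bounded), and since $\zeta_j f$ vanishes near $\partial B_j$ it extends by $0$ to an element of the same kind of space over $\Omega_j$ (resp.\ over $\mathbb{R}^n$ for $j=0$). It therefore suffices to produce, for a fixed special Lipschitz domain $\Omega=\{x_n>\varphi(x')\}$ with $\varphi$ $M$-Lipschitz, a single linear operator $E_\Omega:L^2(\Omega)\to L^2(\mathbb{R}^n)$ which is moreover bounded $W^{k,2}_\star(\Omega)\to W^{k,2}(\mathbb{R}^n)$ for all $k\in\mathbb{N}$, with norm depending only on $n,k,M$; the desired operator is then $Ext\,f=\zeta_0 f$ (extended by $0$) $+\sum_{j\ge1}R_j^{-1}E_{\Omega_j}\bigl(R_j(\zeta_j f)\bigr)$.

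\textbf{Step 2 (the model operator).} Let $\delta^*$ be a regularized distance to $\overline\Omega$ on $\mathbb{R}^n\setminus\overline\Omega$, i.e.\ a smooth function with $c_1\,d(x,\partial\Omega)\le\delta^*(x)\le c_2\,d(x,\partial\Omega)$ and $|\partial^\alpha\delta^*(x)|\lesssim d(x,\partial\Omega)^{1-|\alpha|}$, obtained by gluing scaled bump functions over a Whitney decomposition of $\mathbb{R}^n\setminus\overline\Omega$; normalizing so that $c_1\ge\sqrt{1+M^2}$ guarantees $x_n+\lambda\,\delta^*(x)>\varphi(x')$ whenever $\lambda\ge1$ and $x_n\le\varphi(x')$. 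Let $\psi$ be a fixed continuous, rapidly decreasing function on $[1,\infty)$ with $\int_1^\infty\psi(\lambda)\,d\lambda=1$ and $\int_1^\infty\lambda^m\psi(\lambda)\,d\lambda=0$ for all $m\ge1$ (such a $\psi$ exists; an explicit one is given in \cite{ST}). Set $E_\Omega f=f$ on $\Omega$ and, for $x=(x',x_n)$ with $x_n\le\varphi(x')$,
\[
(E_\Omega f)(x)=\int_1^\infty f\bigl(x',\,x_n+\lambda\,\delta^*(x)\bigr)\,\psi(\lambda)\,d\lambda .
\]
The key features are that $E_\Omega$ does not depend on $k$, and that the integrand is $f$ evaluated inside $\Omega$.

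\textbf{Step 3 (estimates, and the main obstacle).} The $L^2$-bound follows from Minkowski's integral inequality in $\lambda$ and the change of variables $x\mapsto(x',x_n+\lambda\,\delta^*(x))$, whose Jacobian is bounded above and below uniformly on the effective support of $\psi$. For the $W^{k,2}$-bound one differentiates under the integral: by the higher-order chain rule $\partial^\alpha_x\bigl[f(x',x_n+\lambda\,\delta^*(x))\bigr]$, $|\alpha|=k$, is a sum of terms $\lambda^{j}\,(\partial^{j}f)(x',x_n+\lambda\,\delta^*(x))$ times a product of derivatives of $\delta^*$ bounded by $d(x,\partial\Omega)^{\,j-k}$, for $0\le j\le k$. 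For $j=k$ this weight is harmless and the rapid decay of $\lambda^k\psi$ already bounds the term by $\|\partial^kf\|_{L^2(\Omega)}$. For $j<k$ one expands $(\partial^jf)(x',x_n+\lambda\,\delta^*)$ in powers of $(\lambda-1)$ up to order $k-j$ (valid a.e.\ along vertical lines for Sobolev functions); the polynomial part integrates to $0$ against $\psi(\lambda)\,d\lambda$ because it only produces moments $\int\lambda^i\psi$ with $i\ge1$, while the integral remainder carries a factor $(\delta^*)^{k-j}$ that exactly cancels the $d^{\,j-k}$ weight, leaving a convolution-type expression in $\partial^kf$ whose $L^2$-norm over $\{x_n\le\varphi(x')\}$ is again $\lesssim\|\partial^kf\|_{L^2(\Omega)}$. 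The same identities $\int\psi=1$, $\int\lambda^m\psi=0$ also force the derivatives of order $\le k-1$ of the two pieces of $E_\Omega f$ to have matching $L^2$-traces on $\partial\Omega$, so the glued function carries no singular distribution on $\partial\Omega$ and genuinely belongs to $W^{k,2}(\mathbb{R}^n)$. Reassembling via Step 1 finishes the proof. The genuinely delicate points — and the place where the argument could go wrong — are (i) producing $\delta^*$ with the sharp bounds $|\partial^\alpha\delta^*|\lesssim d^{\,1-|\alpha|}$, and (ii) the moment-cancellation bookkeeping just sketched, by which the vanishing moments and decay of $\psi$ neutralize the blow-up of the $\delta^*$-derivatives near $\partial\Omega$; both are carried out in full in \cite{ST}.
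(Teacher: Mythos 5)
Your proposal is a faithful sketch of Stein's original construction (localization by a partition of unity, regularized distance over a Whitney decomposition, and the kernel $\psi$ with vanishing higher moments), which is exactly the source the paper defers to: the paper gives no proof of this theorem beyond citing \cite{ST}. So your argument is correct and coincides with the paper's (implicit) approach; the technical points you flag as delicate are indeed the ones carried out in full in Chapter VI of \cite{ST}.
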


\begin{prop}
Let $U \subset \mathbb{R}^n$ be an open bounded with Lipschitz boundary and $s\geqslant 0$. Let $k=[s]$ and $r=s-[s]$. Then $f\in W^{s,2} (U)$ if and only if: 
\begin{itemize}
\item[$(1)$] For all $\left| \alpha \right| \leqslant k$, we have $\partial ^\alpha f \in L^2 (U)$. 
\item[$(2)$] If $r>0$, then

\begin{equation}
 \int \int _{U \times U} \frac{\left|  \partial ^\alpha  f(x) - \partial ^\alpha  f(y)  \right| ^2 }{\left| x-y    \right| ^{n+2r}}dxdy <+\infty.
\end{equation}
 
\end{itemize}

\begin{proof}
This result follows as a classical consequence of $(3.2)$ and $(3.4)$ (as shown by Lemma 3.5 in \cite{L}).
\end{proof}
\end{prop}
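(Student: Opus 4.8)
The plan is to prove the two implications separately, reducing everything to the characterization $(3.2)$ of $W^{s,2}(\mathbb{R}^n)$, to the identity $(3.4)$, and to the Stein extension theorem above. \emph{Necessity} is elementary and does not use the Lipschitz hypothesis: if $f=F_{\mid U}$ for some $F\in W^{s,2}(\mathbb{R}^n)$, then by $(3.2)$ we have $\partial^\alpha F\in L^2(\mathbb{R}^n)$ for every $|\alpha|\leqslant k$, hence $\partial^\alpha f=(\partial^\alpha F)_{\mid U}\in L^2(U)$, which is $(1)$; and when $r>0$ and $|\alpha|=k$, the integrand of the double integral in $(2)$ is the restriction to $U\times U$ of the corresponding integrand on $\mathbb{R}^n\times\mathbb{R}^n$, so that integral is dominated by a constant times $\|F\|_{W^{s,2}(\mathbb{R}^n)}^2$ and is in particular finite.

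For \emph{sufficiency}, assume $(1)$ and, if $r>0$, also $(2)$. If $r=0$ these conditions say precisely that $f\in W^{k,2}_\star(U)$; the extension theorem above then produces $F:=Ext(f)\in W^{k,2}(\mathbb{R}^n)$ with $F_{\mid U}=f$, whence $f\in W^{k,2}(U)$. If $r>0$, conditions $(1)$--$(2)$ are by definition the statement that $f$ lies in the Sobolev--Slobodeckij space on $U$, which for a Lipschitz domain coincides with $W^{s,2}_\star(U)=[W^{k,2}_\star(U),W^{k+1,2}_\star(U)]_{r}$; since $Ext$ is bounded $W^{k,2}_\star(U)\to W^{k,2}(\mathbb{R}^n)$ and $W^{k+1,2}_\star(U)\to W^{k+1,2}(\mathbb{R}^n)$, functoriality of complex interpolation yields a bounded operator $Ext:[W^{k,2}_\star(U),W^{k+1,2}_\star(U)]_{r}\to[W^{k,2}(\mathbb{R}^n),W^{k+1,2}(\mathbb{R}^n)]_{r}=W^{s,2}(\mathbb{R}^n)$, the last equality being the interpolation description of $W^{s,2}(\mathbb{R}^n)$ recalled in Section~3 and equivalent to $(3.2)$. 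Since $Ext(f)_{\mid U}=f$, we conclude $f\in W^{s,2}(U)$. Alternatively, one may first identify $(1)$--$(2)$ with membership in $W^{s,2}_\star(U)$ and then invoke $(3.4)$ to pass to $W^{s,2}(U)$.

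The only step that genuinely requires the Lipschitz hypothesis, and the one I expect to be the main point, is the identification, for $0<r<1$, of the Sobolev--Slobodeckij space cut out by $(1)$--$(2)$ with the interpolation space $W^{s,2}_\star(U)$ --- equivalently, the comparability of the Gagliardo seminorm with the $K$-functional of the couple $(W^{k,2}_\star(U),W^{k+1,2}_\star(U))$. I would obtain this by a partition of unity flattening $\partial U$ onto a piece of a half-space, where a reflection-type extension reduces both norms to their counterparts on $\mathbb{R}^n$ --- for which $(3.2)$ gives the equivalence --- or simply quote the classical fact, namely Lemma~3.5 of \cite{L} (see also Lions--Magenes \cite{J}), as the proposition itself suggests. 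Everything else (restriction monotonicity, interpolation functoriality, the reduction through $(3.4)$) is formal.
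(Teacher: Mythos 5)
Your argument is correct and follows exactly the route the paper itself indicates: necessity by restriction from the characterization $(3.2)$ on $\mathbb{R}^n$, and sufficiency by identifying conditions $(1)$--$(2)$ with $W^{s,2}_\star(U)$ (the classical fact for Lipschitz domains, i.e.\ Lemma~3.5 of \cite{L}), then transporting via the Stein extension operator and interpolation functoriality to land in $W^{s,2}(\mathbb{R}^n)$, which is precisely the content of $(3.4)$. The only cosmetic discrepancy is your reference to \emph{complex} interpolation where the paper's Section~8 sets up the real $K$-method; for these Hilbert couples the two methods coincide, so nothing is affected.
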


\section{The definable site and the main problem.} 
Let $X_{\mathcal{A}}(\mathbb{R}^n)$ be the category of open bounded definable sets in $\mathbb{R}^n$ (the morphisms are the inclusions, or the empty set). We endow $X_{\mathcal{A}}(\mathbb{R}^n)$ with the Grothendieck topology (note that this definition works for more general categories; for full details, see \cite{KS}):

\begin{center}
$S\subset X_{\mathcal{A}}(\mathbb{R}^n)$ is a covering of $U\in X_{\mathcal{A}}(\mathbb{R}^n)$ if and only if $S$ is finite and $U=\bigcup _{O\in S} O$.
\end{center}

We call this the definable site associated to $\mathcal{A}$.

\begin{defi}
A sheaf of $\mathbb{C}$-vector spaces on the site $X_{\mathcal{A}}(\mathbb{R}^n)$ is a contravariant functor
\begin{center}
$\mathcal{F}: X_{\mathcal{A}}(\mathbb{R}^n) \rightarrow $ $\mathbb{C}$-vector spaces,
\end{center}
such that for any $U,V\in X_{\mathcal{A}}(\mathbb{R}^n)$, the sequence
\begin{center}
$0\rightarrow  \mathcal{F} (U\cup V)\rightarrow \mathcal{F}(U) \oplus \mathcal{F} (V) \rightarrow \mathcal{F} (U\cap V)$
\end{center}
is exact.\\
This is equivalent (see Proposition 6.4.1 in \cite{KS}) to saying that if $S=\{O_1,...,O_l\}\subset X_{\mathcal{A}}(\mathbb{R}^n)$ is a cover of $O\in X_{\mathcal{A}}(\mathbb{R}^n)$, and $f_i \in \mathcal{F}(O_i)$ such that 
\begin{equation}
 f_i \mid _{O_i \cap O_j}=f_j \mid _{O_i \cap O_j}  \; \text{for all} \;  i\neq j \; \text{with} \; O_i \cap O_j\neq \emptyset,
\end{equation}
then there is a unique $f\in \mathcal{F}(O)$ such that $f\mid _{O_i}=f_i$ for $i=1,...,l$.\\
If, in addition, we have that for any $U,V \in X_{\mathcal{A}}(\mathbb{R}^n)$ the sequence
\begin{center}
$0\rightarrow \mathcal{F} (U\cup V)\rightarrow \mathcal{F}(U) \oplus \mathcal{F} (V) \rightarrow \mathcal{F} (U\cap V)\rightarrow 0$
\end{center}
is exact, then $\mathcal{F}$ is an \emph{acyclic} sheaf (see Proposition 2.14 in \cite{GS}).
\end{defi}
For a more comprehensive exploration of this topic, we refer to Kashiwara and Schapira \cite{KS}.\\
The following example  was introduced by Kashiwara \cite{KA} to prove the Riemann-Hilbert correspondence:

\begin{example}
We denote by $X_{sa}(\mathbb{R}^n)$ the site associated to the o-minimal structure of globally subanalytic sets. We define the trace of distributions on open bounded subanalytic sets
\begin{center}
$\mathcal{T}: X_{sa}(\mathbb{R}^n) \to \mathbb{R}$-vector spaces,
\end{center}
such that for $U\subset \mathbb{R}^n$ we have
$$\mathcal{T}(U)=\{f\in \mathcal{D}' (U) \; : \; \exists F\in \mathcal{D}' (\mathbb{R}^n) \; \text{such that} \; F_{\mid U}=f   \}. $$
One can show that $f\in \mathcal{T}(U)$ if and only if there are $C>0$, $m\in \mathbb{N}$, and $r\in \mathbb{N}$ such that for any $\phi \in C_{c}^{\infty} (U)$ we have 
$$ \left| <f,\phi> \right| \leqslant C \sum _{\left| \alpha \right| \leqslant m} \sup_{x\in U} \left( \frac{\left| \partial ^\alpha \phi (x) \right|}{d(x,\partial U)^r} \right)    .$$
Then, thanks to the \L{}ojasiewicz inequality
, $\mathcal{T}$ is an acyclic sheaf on the subanalytic site $X_{sa}(\mathbb{R}^n)$ (see \cite[Theorem 3.5]{KA}). 
\end{example} 
\textbf{Problem:} Given $s>0$, is there a sheaf $\mathcal{F}^s$ on the definable site $X_{\mathcal{A}}(\mathbb{R}^n)$ such that for any $U\in X_{\mathcal{A}}(\mathbb{R}^n)$ with Lipschitz boundary, we have 
\begin{center}
 $\mathcal{F}^s(U)=W^{s,2}(U)$ and $H^j(U, \mathcal{F}^s )=0$ for $j>0$?  
\end{center} 

Recall that for any contravariant functor (a presheaf) $\mathcal{F}: X_{\mathcal{A}}(\mathbb{R}^n) \longrightarrow $ $\mathbb{C}$-vector spaces, and $x\in \mathbb{R}^n$, we denote by $\mathcal{F}_x$ the set of germs of sections of $\mathcal{F}$ at $x$  
$$\mathcal{F}_x = \lim _{\begin{tikzcd}
{} \arrow[r, "x\in U"'] & {}
\end{tikzcd}} \mathcal{F}(U) = \bigslant{\sqcup _{x\in U} \mathcal{F}(U)}{\sim}  ,$$
where $f_1 \sim f_2$ if and only if there is a neighborhood $V \subset U_1 \cap U_2$ of $x$ such that $f_1 \mid _V = f_2 \mid _V$. There is a canonical sheaf $\mathcal{F}_+$ associated to $\mathcal{F}$ defined by  
\begin{center}
   $U\in X_{\mathcal{A}} (\mathbb{R}^n) \mapsto \mathcal{F}_+ (U) \subset F(U, \sqcup _{x\in U} \mathcal{F}_x)$, 
   \end{center}   
where $f\in \mathcal{F}_+ (U)$ if for any $x\in U$,  $f(x)\in \mathcal{F}_x$ and there is a neighborhood $V\subset U$ of $x$ and $\phi \in \mathcal{F}(V)$ such that for every $y\in V$, $f(y)$ is a representative of $\phi$ in $\mathcal{F}_y$.

For $s>0$, consider $W^{s,2} _+$ as the canonical sheaf associated to $W^{s,2}$ on the site $X_{\mathcal{A}} (\mathbb{R}^n)$. However, let $U\in X_{\mathcal{A}} (\mathbb{R}^n)$ be with Lipschitz boundary. It can be shown that \( W^{s,2}_+(U) \) can be identified with the space of functions in \( L^2 _{\textbf{loc}} (U) \) that are locally in \( W^{s,2}(U) \). Moreover, every \( C^\infty \) function on \( U \) belongs to \( W^{s,2}_+(U) \); this is not the case for \( W^{s,2}(U) \), which makes the canonical sheafification method unsuitable for our purpose. Our goal is to create a sheaf out of Sobolev spaces while retaining their advantageous properties, as Sobolev spaces work effectively on domains with Lipschitz boundary. For $s<0$, a sheafification in the derived category $D^+ (X_{\mathbb{R}_{\text{an}}} (\mathbb{R} ^n))$ of sheaves on the subanalytic site was provided by Lebeau \cite{L}:
\begin{thm}
For $s<0$, there exists an object $\mathcal{F}^s \in D^+ (X_{sa} (\mathbb{R} ^n))$ such that if $U\subset \mathbb{R}^n$ is a bounded open subanalytic set with Lipschitz boundary, the complex $\mathcal{F}^s (U)$ is concentrated in degree $0$ and is equal to $W^ {s,2}(U)$.
\end{thm}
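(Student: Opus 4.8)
The plan is to reduce Lebeau's statement for $s<0$ to the (known) situation for $s>0$ by a duality argument, and then to produce the object $\mathcal F^s\in D^+(X_{sa}(\mathbb R^n))$ by the standard Guillermou--Schapira machinery of the linear subanalytic site, from which one reads off that the complex is concentrated in degree $0$ on Lipschitz domains. Concretely, I would first recall that for a bounded open subanalytic set $U$ with Lipschitz boundary the Stein extension Theorem (Theorem 3.2) gives $W^{s,2}(U)=W^{s,2}_\star(U)$ for $s\geq 0$, and dualize: $W^{s,2}(U)$ for $s<0$ is, up to the canonical isomorphism, the dual of $W^{-s,2}_0(U)$ (the closure of $\mathcal D(U)$ in $W^{-s,2}$), equivalently the quotient $W^{s,2}(\mathbb R^n)/W^{s,2}_{\mathbb R^n\setminus U}(\mathbb R^n)$ as described in Section 3. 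This quotient description is exactly the one that glues well.

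Next I would set up the presheaf $U\mapsto W^{s,2}(U)$ on $X_{sa}(\mathbb R^n)$ for $s<0$ and show directly, using the Łojasiewicz inequality and a subanalytic partition of unity exactly as in the displayed computation for the sheaf $\mathcal T$ of traces of distributions, that the Mayer--Vietoris sequence
\[
0\to W^{s,2}(U_1\cup U_2)\to W^{s,2}(U_1)\oplus W^{s,2}(U_2)\to W^{s,2}(U_1\cap U_2)\to 0
\]
is a complex whose failure to be exact is controlled; the point is that for $s<0$ the spaces are spaces of distributions (no boundary traces obstruct the gluing), so one expects exactness in the subanalytic category, i.e. that the presheaf is already a sheaf and even acyclic on Lipschitz data. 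If full exactness at the level of the naive presheaf is not available for all covers, one instead passes to the associated sheaf in $D^+(X_{sa}(\mathbb R^n))$: define $\mathcal F^s$ as the image of this presheaf of Fréchet/Hilbert spaces under the functor from \cite{GS}, so that $R\Gamma(U,\mathcal F^s)$ is computed by a Čech complex built from the $W^{s,2}$ of a finite subanalytic cover, and the vanishing of higher cohomology on Lipschitz $U$ follows from the acyclicity of $\mathcal T$ together with the duality with the $s>0$ side. Then I would verify $H^0(U,\mathcal F^s)=W^{s,2}(U)$ on Lipschitz domains by checking that the separation/gluing conditions coincide with the classical restriction description $(3.3)$.

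The main obstacle is the acyclicity, i.e. proving that higher Čech cohomology of $W^{s,2}$ vanishes on a finite subanalytic cover of a Lipschitz domain; this is where the linear subanalytic site of Guillermou--Schapira is essential, because on the ordinary site one only knows the two-term exactness for pairs $(U_1,U_2)$, and extending to the $n$-fold covers needs the functorial formalism of \cite{GS} (bounded quasi-injective resolutions, the fact that the relevant functors are exact on Lipschitz charts). A secondary technical point is checking that the quotient-Hilbert structure on $W^{s,2}(U)$ is compatible with restriction maps and with the duality isomorphism, so that all arrows in the Mayer--Vietoris complex are continuous; this is routine but must be stated. I expect the duality step (transferring the Stein-extension splitting from $W^{-s,2}_\star$ to $W^{s,2}$) and the Łojasiewicz estimate for the gluing to be straightforward adaptations of the arguments already displayed for $\mathcal T$, so the real content is the homological input from \cite{GS}.
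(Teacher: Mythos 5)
The paper does not actually prove this statement: it is quoted verbatim from Lebeau \cite{L}, and the surrounding text only records that the construction relies on the linear subanalytic site of Guillermou--Schapira. Measured against Lebeau's actual argument, your proposal has a genuine gap: the entire homological content is deferred. You correctly identify the acyclicity of the \v{C}ech complexes on finite subanalytic covers as ``the main obstacle,'' but then resolve it by appeal to ``the functorial formalism of \cite{GS}.'' That formalism does not apply to the naive presheaf $U\mapsto W^{s,2}(U)$ on the ordinary subanalytic site; it requires defining the presheaf on the \emph{linear} subanalytic site, which means proving quantitative Mayer--Vietoris statements in which the norms of the gluing/extension operators are controlled polynomially in the parameters of the covering. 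That quantitative input is the actual content of Lebeau's proof and is absent from your sketch. Moreover, your heuristic that for $s<0$ ``no boundary traces obstruct the gluing, so one expects exactness \dots\ the presheaf is already a sheaf'' contradicts what this paper itself records: $U\mapsto W^{s,2}(U)$ is a sheaf only for $s\in\,]-\frac{1}{2},\frac{1}{2}[$ (Section 5); for $s\leqslant -\frac{1}{2}$ it genuinely fails to be a sheaf, which is precisely why $\mathcal{F}^s$ must be built in $D^+(X_{sa}(\mathbb{R}^n))$ and only agrees with $W^{s,2}(U)$ on Lipschitz $U$.

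A second concrete failure: the \L{}ojasiewicz/partition-of-unity computation you propose to imitate works for the trace sheaf $\mathcal{T}$ because membership in $\mathcal{T}(U)$ tolerates an arbitrary polynomial loss in $d(x,\partial U)$, so multiplying by a cutoff subordinate to a bad cover merely increases the exponent $r$. A Sobolev space of fixed negative order has no such slack: multiplication by the cutoff $\varphi_1$ is not bounded on $W^{s,2}$ near a non-Lipschitz portion of $\partial U_1$, so the displayed estimate for $\mathcal{T}$ does not adapt. The duality reduction to $s>0$ via the Stein extension is fine as a description of the single space $W^{s,2}(U)$ for Lipschitz $U$, but it does not by itself produce restriction morphisms, an object of the derived category, or the vanishing of higher cohomology. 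To repair the proposal you would need to either carry out Lebeau's construction on the linear site, with the polynomial norm bounds, or supply a substitute for it; as written, the argument has not yet engaged the part that is actually hard.
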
 
\section{The spaces $W^{s,2}$ for $s\in ]-\frac{1}{2}, \frac{1}{2}[$.}
Using the results of Parusi\'nski \cite{Pa2}, it was noticed in \cite{L} that for $s\in ]-\frac{1}{2}, \frac{1}{2}[$, the presheaf $U \mapsto W^{s,2} (U)$ is an acyclic sheaf on the subanalytic site. For the convenience of the reader, we provide detailed explanations of why this is true in the o-minimal case. Let us first recall a classical result on fractional Sobolev spaces (see \cite[Theorem 11.2]{J}). Take $s\in ]0,\frac{1}{2}[$ and $U\subset \mathbb{R}^n$ an open bounded set with Lipschitz boundary. Then there is a $C>0$ such that for any $f\in W^{s,2}(U)$, we have
\begin{equation}
\left\| \frac{f(x)}{d(x,\mathbb{R}^n \setminus U)^s} \right\| _{L^2 (U)}\leqslant C \| f \| _{W^{s,2}(U)} .
\end{equation}

\textbf{Fact}: Fix $s\in ]-\frac{1}{2}, \frac{1}{2} [$ and let $U\in X_{\mathcal{A}} (\mathbb{R}^n)$ be with  Lipschitz boundary. Then the linear operator
\begin{center}
 $1_U : W^{s,2}(\mathbb{R}^n) \longrightarrow W^{s,2}(\mathbb{R}^n)$\\
 $f\mapsto 1_U f$
 \end{center} 
is well defined.
\begin{proof} The case of $s=0$ is obvious. For $0 < s < \frac{1}{2}$, consider $f\in W^{s,2}(\mathbb{R}^n)$ . It is clear that $1_{U}f \in L^2 (\mathbb{R}^n)$, so by $(3.2)$ we need to prove that
\begin{equation}
L=\int \int _{\mathbb{R}^n \times \mathbb{R}^n} \frac{\left|    1_U f(x) -   1_U f(y)  \right| ^2 }{\left| x-y    \right| ^{n+2s}}dxdy <+\infty.
\end{equation}
But
\[
L=\int \int _{U \times U} \frac{\left|    f(x) -   f(y)  \right| ^2 }{\left| x-y    \right| ^{n+2s}}dxdy + 2 \int _U \left| f(x) \right| ^2 \left( \int _{ U ^c} \frac{1 }{\left| x-y    \right| ^{n+2s}}dy \right) dx.
\]
Since $f\in W^{s,2}(\mathbb{R}^n)$, by $(5.1)$ it is enough to prove that
\begin{equation}
d(x,\mathbb{R}^n \setminus U)^{-2s} \lesssim \int _{U} \frac{1}{\left| x-y    \right| ^{n+2s}}dy \lesssim  d(x,\mathbb{R}^n \setminus U)^{-2s},
\end{equation}
where $U\in X_{\mathcal{A}} (\mathbb{R}^n)$ with Lipschitz boundary. Since $\partial U$ is bounded, we can assume that
\begin{equation}
U=\{ (y',y_n) \in \mathbb{R}^n \; :  \; y_n >0   \}.
\end{equation}
A simple computation shows that
\[
d(x,\mathbb{R}^n \setminus U)^{-2s}=\frac{1}{\left| x_n \right| ^{2s}} \lesssim \int _{U} \frac{1}{\left| x-y    \right| ^{n+2s}}dy \lesssim \frac{1}{\left| x_n \right| ^{2s}} =d(x,\mathbb{R}^n \setminus U)^{-2s}.
\]

For $s\in ]-\frac{1}{2} , 0 [$, consider $T\in W^{s,2}(\mathbb{R}^n )$. We have
\begin{center}
$1_U T:W^{-s,2}(\mathbb{R}^n)\longrightarrow \mathbb{C}$\\
$\; \; \; \; \; \; \; \; \; \; \; \; \; \; \; \; \; \; \; \; \; \; \; \; \; \; \; \; \; \; \; f \mapsto <1_U T,f >= <T,1_U f>$.
\end{center}
By the case of $s\in ]0,\frac{1}{2} [$, $1_U T$ is well defined and lies in $W^{-s,2}(\mathbb{R}^n)$.

 \end{proof} 

Let $\mathcal{A} (\mathbb{R}^n)$ denote the algebra generated by the characteristic functions of open bounded definable sets in $\mathbb{R}^n$, that is 
\[
\mathcal{A} (\mathbb{R}^n)=\left\{ \sum _{i\in I} m_i 1_{U_i} \; : \; I \; \text{finite}, \; m_i \in \mathbb{Z}, \; \text{and} \; U_i \in X_{\mathcal{A}}(\mathbb{R}^n)   \right\}.
\]

Then we have Parusi\'nski's result in \cite{Pa2}: 

\begin{thm}\label{thm5.1}
The algebra $\mathcal{A} (\mathbb{R}^n)$ is generated by the characteristic functions of Lipschitz definable domains.
\end{thm}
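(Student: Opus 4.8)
The plan is to proceed by induction on the dimension $n$, exploiting cell decomposition together with the fact that the characteristic function of a definable open set can be expressed, up to lower-dimensional corrections, in terms of the graphs of finitely many definable functions over a common base. First I would invoke Theorem 2.1 (cell decomposition) to write an arbitrary open bounded definable $U \subset \mathbb{R}^n$ as a finite disjoint union of open cells $C$ of the form $\Gamma(B,\phi,\varphi)$ over definable cells $B \subset \mathbb{R}^{n-1}$, together with lower-dimensional pieces; since adding or removing a definable set of dimension $< n$ does not change $1_U$ as an element of $\mathcal{A}(\mathbb{R}^n)$ modulo characteristic functions of lower-dimensional (hence measure-zero, but more importantly here ``thin'') sets, it suffices to treat a single such cell $C$ and then handle the base $B$ by the inductive hypothesis. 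Crucially, the base cell $B$ need not be L-regular, so the reduction is genuinely recursive: one pushes the problem down one dimension at a time.

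The heart of the argument is a ``straightening'' step: given a cell $C = \Gamma(B,\phi,\varphi)$, I would like to compare $1_C$ with the characteristic function of the L-regular-type region obtained after modifying the functions $\phi,\varphi$ so that they have bounded derivatives. The obstruction to $\phi$ (or $\varphi$) having bounded derivative is confined, by o-minimality (monotonicity / the structure of definable functions of one variable, applied fiberwise, or better by a uniform argument using cell decomposition of the graph), to a lower-dimensional definable subset of $B$ near which one can perform a further cell decomposition. Concretely, one decomposes $B$ into cells on which the ``large gradient'' locus of $\phi$ is pushed to the boundary, and on the good part $\phi$ is Lipschitz; iterating the reduction on the bad part, whose base has dimension $\le n-2$ relative to the original, drives the induction. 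Thus $1_C$ is, in $\mathcal{A}(\mathbb{R}^n)$, a $\mathbb{Z}$-combination of characteristic functions of regions each of which is, after a linear change of coordinates, either an L-regular cell or sits over a strictly simpler base. Since L-regular cells are Lipschitz domains by Definition 2.6 (the defining functions having bounded derivatives is exactly the Lipschitz condition), and since the graph-type lower-dimensional cells contribute nothing to $\mathcal{A}(\mathbb{R}^n)$, this closes the induction, with the base case $n=1$ being trivial because every bounded definable open subset of $\mathbb{R}$ is a finite union of intervals, which are Lipschitz domains.

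The main obstacle I anticipate is the straightening/boundedness-of-derivatives step: ensuring that the modification of $\phi$ and $\varphi$ can be done while (a) staying within the definable category, (b) keeping track of the combinatorics so the $\mathbb{Z}$-linear bookkeeping in $\mathcal{A}(\mathbb{R}^n)$ actually telescopes, and (c) guaranteeing that the ``corner'' sets produced when two graphs meet, or when a graph meets the vertical boundary over $\partial B$, are themselves expressible via Lipschitz domains rather than introducing new cuspidal behavior. This is precisely the content of Parusiński's L-regular decomposition (Theorem 2.8), so the cleanest route is to apply Theorem 2.8 directly to the family consisting of $U$, $\overline{U}$, and the relevant graphs, obtaining a finite partition into L-regular cells compatible with $U$; then $1_U = \sum_k 1_{L_k}$ where the sum is over those $L_k$ contained in $U$, each $L_k$ open $L_k$ contributes its (Lipschitz) closure up to lower-dimensional error, and one checks that an open L-regular cell, after the linear change of coordinates, is a Lipschitz domain — reducing the whole theorem to that last local verification. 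I would present the proof in this second, more economical form, with the inductive straightening argument relegated to a remark explaining why L-regular cells suffice.
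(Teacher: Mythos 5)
There is a genuine gap, and it sits at the decisive step of your ``economical'' version (and is also the unexamined endpoint of your inductive sketch): the claim that an open L-regular cell is, after a linear change of coordinates, a Lipschitz domain, because ``the defining functions having bounded derivatives is exactly the Lipschitz condition''. This is false. Boundedness of $\nabla\phi_1,\nabla\phi_2$ makes each graph $\Gamma_{\phi_i}$ a Lipschitz hypersurface, but says nothing about the region $\Gamma(X',\phi_1,\phi_2)$ near a point of $\partial X'$ where $\phi_2-\phi_1\to 0$: there the cell is a cusp, contains no cone with vertex at that point, and hence cannot be written as the supergraph of a Lipschitz function in any rotated frame, as the definition of a Lipschitz domain in Section 3 requires. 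The standard example $\{(x,y)\;:\;0<x<1,\ 0<y<x^2\}$ is an open L-regular cell and is not Lipschitz at the origin. This is not a side issue for the present paper: the existence of cuspidal L-regular cells (case $(C_3)$ of Section~6, and the third item of Remark~\ref{remark2.8}, which says explicitly that open L-regular sets in $\mathbb{R}^2$ have cuspidal singularities) is precisely why $W^{k,2}$ fails to be a sheaf and why $\mathcal{F}^k$ has to be constructed at all. If your claim were true, Theorem~\ref{L-reg-decomposition} would immediately make $W^{s,2}$ an acyclic sheaf for every $s$, contradicting the counterexamples given in cases $(C_4)$ and $(C_5)$ of Section~6.

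What is missing is the mechanism that converts a cuspidal L-regular cell into a $\mathbb{Z}$-combination of characteristic functions of genuinely Lipschitz sets --- for instance writing $1_{\Gamma(X',\phi_1,\phi_2)}=1_{\Gamma(X',\phi_1,\psi)}-1_{\Gamma(X',\phi_2,\psi)}-1_{\Gamma_{\phi_2}}$ for an auxiliary $\psi$ chosen so that both larger regions open up with positive angle at the former cusp points, together with an inductive treatment of the lower-dimensional graph terms. Note that the identity in $\mathcal{A}(\mathbb{R}^n)$ must be exact, not ``up to lower-dimensional corrections'', so those terms cannot be discarded; they must themselves be expressed through the generators, which is again not automatic (a punctured ball, for example, is not Lipschitz). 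Your first, ``straightening'' paragraph does not supply this either: making $\phi$ and $\varphi$ Lipschitz does not remove the tangency of the two graphs over $\partial B$, which is exactly where the non-Lipschitz behaviour lives. This subtraction-and-induction argument is the actual content of Parusi\'nski's proof in \cite{Pa2}; the paper under review does not reprove the theorem but only cites that reference, so the burden of the argument you are proposing to replace is precisely the step your proposal asserts without proof.
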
 

Now we explain why for $s\in ]-\frac{1}{2},\frac{1}{2} [$, the presheaf $W^{s,2}$ is an acyclic sheaf on the definable site $X_{\mathcal{A}}(\mathbb{R}^n)$, that is, for any $U,V\in X_{\mathcal{A}}(\mathbb{R}^n)$ the sequence
\[
0\to W^{s,2}(U\cup V) \rightarrow W^{s,2} (U)\oplus W^{s,2} (V) \rightarrow W^{s,2} (U\cap V) \rightarrow 0
\]
is exact.

\begin{proof}
By the definition of $W^{s,2}$, we have the surjectivity of the map 
$$W^{s,2} (U)\oplus W^{s,2} (V) \to W^{s,2} (U\cap V).$$
Take $(f,g)\in W^{s,2} (U)\oplus W^{s,2} (V)$ such that $f_{\mid U\cap V}=g_{\mid U\cap V}$. Take $(\widehat{f} , \widehat{g} )\in (W^{s,2} (\mathbb{R} ^n))^2 $ such that
\[
\widehat{f} _{\mid U}=f \quad \text{and} \quad \widehat{g} _{\mid V}=g.
\]
By the previous fact and Theorem~\ref{thm5.1}, we have $h=1_U \widehat{f} + 1_V \widehat{g} - 1_{U\cap V} \widehat{f} \in W^{s,2} (\mathbb{R}^n)$. Then $h_{\mid U\cup V} \in W^{s,2}(U\cup V)$, $(h_{\mid U\cup V})_{\mid U}=f$, and $(h_{\mid U\cup V})_{\mid V}=g$.
\end{proof}

\section{Construction of the sheaf $\mathcal{F}^k$ on $\mathbb{R}^2$ for $k\in \mathbb{N}$.}
Before we begin, we fix anticlockwise orientation of the plane $\mathbb{R}^2$ generated by the vectors $\overrightarrow{e_1}=(1,0)$ and $\overrightarrow{e_2}=(0,1)$. Given two definable $C^1$-curves $\gamma_1 , \gamma_2 : [0,a[ \longrightarrow \mathbb{R}^2$, and $r>0$ sufficiently small such that $\gamma_1 (0)=\gamma_2 (0)=p_0$, we denote by $R(r,\gamma_1, \gamma_2)$ the open definable subset (see Figure 2):
\[ R(r,\gamma_1, \gamma_2)=\{ P\in \mathbb{R}^2 \; : \; P\in B(p_0,r) \; \text{and} \; P \; \text{is between} \; \gamma_1 \; \text{and} \; \gamma_2  \}. \]
Formally,
\[ P\in R(r,\gamma_1, \gamma_2) \text{ if and only if } \angle(\gamma_1 \cap C(p_0,\| P \|),\overrightarrow{e_1} )< \angle(\overline{p_0 P},\overrightarrow{e_1})  <\angle(\gamma_2 \cap C(p_0,\| P \|), \overrightarrow{e_1}). \]
Here,
\[ C(p_0 , \| P \|)=\{x\in \mathbb{R}^2 \; : \; \| x - p_0 \| = \| P \|    \}. \]

If we parameterize $\gamma_1$ and $\gamma_2$ by the distance to $p_0$ (assume that $p_0=0$, which is always possible up to a translation):

\begin{center}
$\gamma_1 (t)=te^{i\theta _1 (t)}$ and $\gamma_2 (t)=te^{i\theta _2 (t)}$ with $t\in [0,r[$ and $0<\theta _2 (t)-\theta _1 (t)<2\pi$.
\end{center}

Then,
\begin{center}
$R(r,\gamma_1, \gamma_2)=\{te^{i \theta} \; : \; t\in ]0,r[ \; \text{and} \; \theta _1 (t)<\theta <\theta _2 (t)     \}$.
\end{center}

\begin{rmk}
We can always choose $r$ to be small enough such that $R(r,\gamma_1, \gamma_2)$ is connected and the circle $C(p_0 , r')$  (for $r'<r$) is transverse to $\gamma_1$ and $\gamma_2$ at the intersection points (which consist of only two points).
\end{rmk}

 \begin{figure}[H]
  \centering
  \includegraphics[scale=0.4]{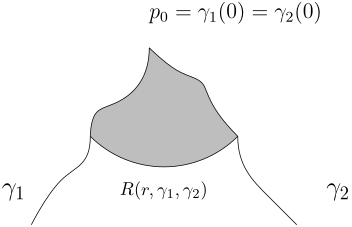}
  \caption{The domain $R(r,\gamma _1, \gamma _2)$.}   
\end{figure}

\subsection{The local nature of open definable sets in $\mathbb{R}^2$.} Let $U$ be a bounded connected open definable subset of $\mathbb{R}^2$. By choosing a cell decomposition of $\mathbb{R}^2$ compatible with $U$ and $\partial U$, we can prove that for any $p_0 \in \partial U$ there is $r>0$ such that we have one of the following cases: 

\begin{itemize}
\item[$(C_1)$:] \textbf{Punctured disk.} $B_r (p_0) \cap U= B_r (p_0)\setminus \{ p_0  \}$.
\begin{figure}[H]
  \centering
  \includegraphics[scale=0.3]{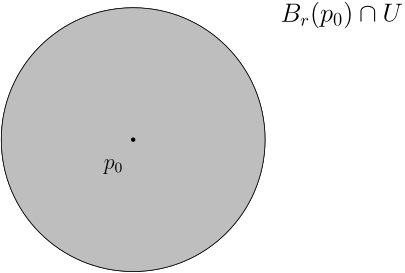}
  \caption{The $(C_1)$ case.}   
\end{figure}
\item[$(C_2)$:] \textbf{Sector.} There are two definable $C^1$-curves $\gamma _1 , \gamma _2 : [0,a[ \longrightarrow \mathbb{R}^2$ such that $\gamma _1 (0) = \gamma _2 (0)=p_0$, $\angle(\gamma _1 '(0),  \gamma _2 ' (0))\neq 0, 2\pi$, and

\begin{center}
$B_r (p_0) \cap U=R(r,\gamma _1 , \gamma _2)$.
\end{center}
\begin{figure}[H]
  \centering
  \includegraphics[scale=0.4]{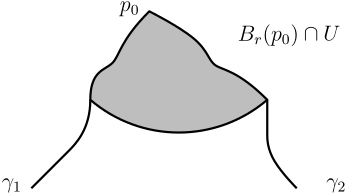}
  \caption{The $(C_2)$ case.}   
\end{figure}
\item[ $(C_3)$:] \textbf{Cusp.} There are two definable $C^1$-curves $\gamma _1 , \gamma _2 : [0,a[ \longrightarrow \mathbb{R}^2$ such that $\gamma _1 (0) = \gamma _2 (0)=p_0$, $\angle(\gamma _1 '(0),  \gamma _2 ' (0))=0$, and

\begin{center}
$B_r (p_0) \cap U=R(r,\gamma _1 , \gamma _2)$.
\end{center}
\begin{figure}[H]
  \centering
  \includegraphics[scale=0.4]{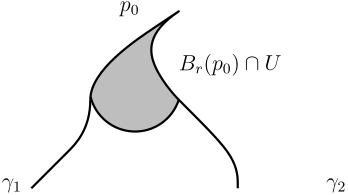}
  \caption{The $(C_3)$ case.}   
\end{figure}
\item[ $(C_4)$:] \textbf{Cusp complement.} There are two definable $C^1$-curves $\gamma _1 , \gamma _2 : [0,a[ \longrightarrow \mathbb{R}^2$ such that $\gamma _1 (0) = \gamma _2 (0)=p_0$, $\angle(\gamma _1 '(0),  \gamma _2 ' (0))=2\pi$, and
\begin{center}
$B_r (p_0) \cap U=R(r,\gamma _1 , \gamma _2)$.
\end{center}
\begin{figure}[H]
  \centering
  \includegraphics[scale=0.4]{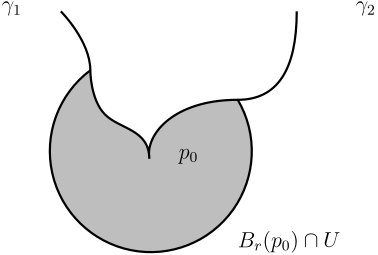}
  \caption{The $(C_4)$ case.}   
\end{figure}
\item[ $(C_5)$:] \textbf{Arc complement.} There exists a definable $C^1$-curve $\gamma : [0,a[ \longrightarrow \mathbb{R}^2$ such that $\gamma (0)=p_0$ and 

\begin{center}
$B_r (p_0) \cap U=B_r(p_0) \setminus Im(\gamma)$.
\end{center}
\begin{figure}[H]
  \centering
  \includegraphics[scale=0.4]{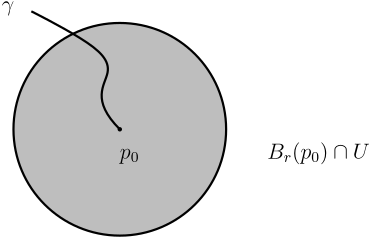}
  \caption{The $(C_5)$ case.}   
\end{figure}
\item[$(C_6)$] $B_r (p_0) \cap U$ is a disjoint union of copies of open sets  like  $C_2$, $C_3$, and $C_4$.
\begin{figure}[H]
  \centering
  \includegraphics[scale=0.4]{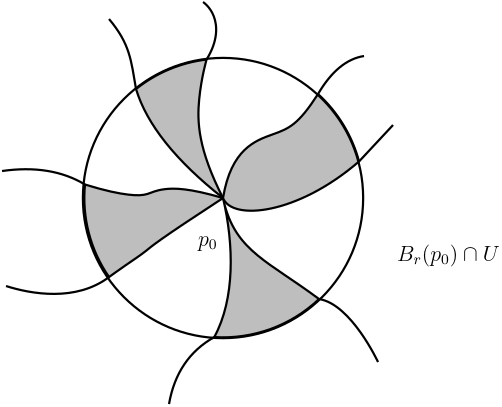}
  \caption{The $(C_6)$ case.}   
\end{figure}
\end{itemize}

\subsection{Local definition of the sheaf $\mathcal{F}^k$.}

\begin{lem}\label{Proposition2.2} Let $U$, $V$ be two Lipschitz definable bounded open subsets of $\mathbb{R}^n$ such that $U\cup V$ and $U\cap V$ are Lipschitz. For any $s\in \mathbb{R}_+$, the sequence of Hilbert spaces 
\begin{center}
$0\to W^{s,2} (U\cup V)\to W^{s,2} (U)\oplus W^{s,2} (V) \to W^{s,2} (U\cap V) \to 0$
\end{center}
is exact.
\begin{proof}
See \cite{L} for the proof (or see Section 8 for a categorical proof).
\end{proof}

\end{lem}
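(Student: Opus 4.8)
The plan is to prove exactness of the Mayer--Vietoris sequence
\[
0\to W^{s,2}(U\cup V)\xrightarrow{\alpha} W^{s,2}(U)\oplus W^{s,2}(V)\xrightarrow{\beta} W^{s,2}(U\cap V)\to 0,
\]
where $\alpha(f)=(f|_U,f|_V)$ and $\beta(g,h)=g|_{U\cap V}-h|_{U\cap V}$, in three parts: injectivity of $\alpha$, surjectivity of $\beta$, and exactness in the middle (which is the substantive point). Injectivity of $\alpha$ is immediate: if $f\in W^{s,2}(U\cup V)$ restricts to $0$ on both $U$ and $V$, then it is $0$ on $U\cup V$ as a distribution. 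Surjectivity of $\beta$ is also easy from the very definition of these spaces as restrictions of $W^{s,2}(\mathbb{R}^n)$: given $\varphi\in W^{s,2}(U\cap V)$, pick $\Phi\in W^{s,2}(\mathbb{R}^n)$ with $\Phi|_{U\cap V}=\varphi$, and then $(\Phi|_U,0)\in W^{s,2}(U)\oplus W^{s,2}(V)$ maps to $\varphi$ under $\beta$. So the content lies in showing that if $(g,h)$ agree on $U\cap V$ then the two pieces glue to a single element of $W^{s,2}(U\cup V)$.

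For the gluing I would follow Lebeau's strategy via cutoff/multiplication operators, now legitimate because all four sets $U$, $V$, $U\cap V$, $U\cup V$ are Lipschitz. The key auxiliary fact I would isolate first: if $\Omega$ is a bounded Lipschitz definable open set and $s\ge 0$, then there exists a bounded linear ``extension-by-gluing'' map reflecting that one can cut a global Sobolev function by a smooth-ish partition adapted to the Lipschitz pair $(U,V)$. Concretely, since $\overline U\setminus U$ and $\overline V\setminus V$ are compact and locally graphs of Lipschitz functions, one can build a Lipschitz partition of unity $(\chi_U,\chi_V)$ subordinate to a neighborhood cover with $\chi_U+\chi_V\equiv 1$ near $\overline{U\cup V}$, $\operatorname{supp}\chi_U$ contained in a neighborhood of $\overline U$ relative to $U\cup V$, etc. Multiplication by such a Lipschitz function preserves $W^{s,2}$ only for $s\in[0,1]$ in general, so for larger $s$ I would instead argue by reduction to the extension theorem: using Stein extension (Theorem~3.2) together with Proposition~3.5, one knows $W^{s,2}(U)=W^{s,2}_\star(U)$ with equivalent norms, and similarly for $V$, $U\cap V$, $U\cup V$. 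Then given $g\in W^{s,2}(U)$, $h\in W^{s,2}(V)$ agreeing on $U\cap V$, the common restriction defines a measurable function $f$ on $U\cup V$ with $f|_U=g$, $f|_V=h$; one checks directly from the intrinsic description in Proposition~3.5 that $f\in W^{s,2}_\star(U\cup V)$: the $L^2$ conditions on derivatives $\partial^\alpha f$, $|\alpha|\le[s]$, are local and follow since $U\cup V=U\cup V$ is covered by the two pieces, and the fractional Gagliardo seminorm over $(U\cup V)^2$ splits as a sum over $U\times U$, $V\times V$, and the cross term over $(U\setminus V)\times(V\setminus U)$, the last of which is controlled because on a Lipschitz domain $U\cup V$ the distance between $U\setminus V$ and $V\setminus U$ away from their common boundary portion is comparable to the distance to $\partial(U\cup V)$ (a \L{}ojasiewicz-type estimate as in the subanalytic trace example above), and the cross term is dominated by $\int_{U\cup V}|f(x)|^2 d(x,\partial(U\cup V))^{-2r}\,dx\lesssim\|f\|^2_{W^{s,2}_\star(U\cup V)}$-type bounds, combined with the finiteness over the diagonal-near pieces.

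Having placed $f\in W^{s,2}_\star(U\cup V)=W^{s,2}(U\cup V)$ with $\alpha(f)=(g,h)$, exactness in the middle follows, and the displayed sequence is exact. Finally, continuity of the maps and the resulting topological exactness (so that the quotient norms match, and hence the sequence is a short exact sequence of Hilbert spaces, as stated) follows from the open mapping theorem once one knows $\alpha$ is a closed embedding and $\beta$ is surjective with closed kernel equal to the image of $\alpha$. The hardest step, and the one I would spend the most care on, is the control of the cross term in the Gagliardo seminorm over $(U\setminus V)\times(V\setminus U)$: this is exactly where the Lipschitz hypotheses on \emph{all four} of $U,V,U\cap V,U\cup V$ get used, via a \L{}ojasiewicz / Whitney-type comparison of $|x-y|$ with $d(x,\partial(U\cup V))+d(y,\partial(U\cup V))$ on the relevant region, and I would either cite the corresponding lemma from \cite{L} or reprove it in the definable setting using cell decomposition.
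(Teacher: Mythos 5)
Your overall skeleton is sound: injectivity and surjectivity are exactly as easy as you say, and for integer $s=k$ your partition-of-unity identification of the distributional derivatives of the glued function is essentially the argument the paper itself gives in the remark following the lemma (where it is moreover observed that the Lipschitz hypothesis on $U\cap V$ is superfluous in the integer case). Note, however, that the paper does not argue your way for general $s$: it cites Lebeau and, in Section~8, gives a categorical proof by interpolation between the integer levels, using the Stein extension operator on $U\cap V$ to split the kernel--cokernel pair of GB-pairs and the injectivity of the interpolation functor $F_s$ on split pairs. That route deliberately avoids the fractional Gagliardo seminorm altogether.

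The genuine gap in your argument is the cross-term estimate for fractional $s$, i.e. the integral of $|\partial^\alpha f(x)-\partial^\alpha f(y)|^2\,|x-y|^{-n-2r}$ over $(U\setminus V)\times(V\setminus U)$ with $r=s-[s]>0$. You propose to dominate it by a Hardy-type quantity $\int |f(x)|^2\, d(x,\partial(U\cup V))^{-2r}\,dx$ and then by $\| f \|^2_{W^{s,2}_\star(U\cup V)}$. This fails on two counts. First, it is circular: $\| f \|_{W^{s,2}_\star(U\cup V)}$ is precisely what you are trying to prove finite. Second, and more seriously, the underlying inequality $\int_\Omega |u|^2 d(\cdot,\partial\Omega)^{-2r}\lesssim \| u \|^2_{W^{r,2}(\Omega)}$ holds only for $r<\tfrac{1}{2}$ (this is exactly the restriction in $(5.1)$ of the paper); for $r\geqslant\tfrac{1}{2}$ it already fails for $u\equiv 1$, so estimating $|u(x)-u(y)|^2$ by $2|u(x)|^2+2|u(y)|^2$ produces a divergent majorant even when the cross term itself vanishes. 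A correct estimate must exploit that $\partial^\alpha g=\partial^\alpha h$ on $U\cap V$: for $x\in U\setminus V$ and $y\in V\setminus U$ one inserts an intermediate point $z\in U\cap V$ with $|x-z|+|z-y|\lesssim|x-y|$, writes $u(x)-u(y)=(u(x)-u(z))+(u(z)-u(y))$, and after averaging over admissible $z$ reduces the cross term to the Gagliardo seminorms over $U\times U$ and $V\times V$. The uniform existence of such intermediate points is the geometric lemma where the Lipschitz hypotheses on $U$, $V$, $U\cap V$ and $U\cup V$ actually enter, and it is this lemma --- not a comparison of $|x-y|$ with distances to $\partial(U\cup V)$ --- that you would need to prove or quote. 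As written, your proof covers integer $s$ but leaves the fractional case open.
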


\begin{rmk}\label{rem_s_in_N}
For $s\in \mathbb{N}$, the requirement for $U\cap V$ to be Lipschitz in the statement of Lemma~\ref{Proposition2.2} is not necessary.
\end{rmk}
\begin{proof}
Take $s=k\in \mathbb{N}$. By $(3.4)$, for $\Omega = U \cup V \subset \mathbb{R}^n$, we have
\begin{center}
$W^{k,2}(\Omega)=\{f\in L^2(\Omega) \; : \; \forall \alpha \in \mathbb{N}^n \; \text{such} \; \text{that}  \left| \alpha \right| \leqslant k, \; \partial ^{\alpha}f \in L^2 (\Omega) \;   \}$,
\end{center}
where $\partial ^{\alpha}f$ is the distributional derivative of $f$. The Hilbert structure of $W^{k,2}(\Omega)$ is given by $$\| f \| _{W^{k,2}(\Omega)} ^2=\sum _{\mid \alpha \mid \leqslant k} \| \partial ^{\alpha}f \| _{L^2(\Omega)} ^2 .$$
Now, consider $(f,g)\in W^{k,2} (U)\oplus W^{k,2} (V)$ such that $f\mid _{U\cap V}=g \mid _{U\cap V}$. There exists $H\in L^2(U \cup V)$ such that $H\mid _{U}=f\in W^{k,2}(U)$ and $H\mid _{V}=g\in W^{k,2}(V)$. We aim to show that for any $\alpha \in \mathbb{N}^n$ with $\left| \alpha \right| \leqslant k$, there exists $h_\alpha \in L^2 (U \cup V)$ such that $\partial ^{\alpha }H=h_\alpha$ (in the distributional sense).

Let $(\varphi _U , \varphi _V)$ be a partition of unity associated to $(U,V)$. For any $\phi \in C_c ^{\infty}(U \cup V)$, we have
\begin{center}
	\begin{align*}
	<\partial ^\alpha H , \phi > &= <\partial ^\alpha H , \varphi _U \phi >+<\partial ^\alpha H , \varphi _V \phi >\\
	&= (-1)^{\mid \alpha \mid}\int _U H\partial ^\alpha (\varphi _U \phi) + (-1)^{\mid \alpha \mid}\int _V H\partial ^\alpha (\varphi _V \phi)\\
	 &= (-1)^{\mid \alpha \mid}\int _U f\partial ^\alpha (\varphi _U \phi) + (-1)^{\mid \alpha \mid}\int _V g\partial ^\alpha (\varphi _V \phi)\\
	 &= \int _U \partial ^\alpha f (\varphi _U \phi) + \int _V \partial ^\alpha g (\varphi _V \phi)\\
	 &= \int _{U\cup V} (\varphi _U \partial ^{\alpha}f +  \varphi _V \partial ^{\alpha}g )\phi \\
	 &= \int _{U\cup V} h_\alpha \phi.
	\end{align*}
\end{center}

Here, $h_\alpha := \varphi _U \partial ^{\alpha}f +  \varphi _V \partial ^{\alpha}g \in L^2 (U \cup V)$, which completes the proof.
\end{proof}

From now on, we consider $k\in \mathbb{N}$. Let $U$ be a connected open definable bounded subset of $\mathbb{R}^2$. We define the $\mathbb{C}$-vector space $\widehat{\mathcal{F}}^k (U)$ in the following special cases:

\begin{itemize}
\item[$(C_1)$] If $U=B_r (p_0)\setminus \{ p_0  \}$, we can assume $p_0=(0,0)$ and $r=1$. In this case, we can decompose $U=U_1 \cup U_2$, where 
\begin{center}
$U_1=\{(x,y)\in U: \; y > x \; \text{or} \; y<-x   \}$ and $U_2=\{(x,y)\in U: \; y> -x \; \text{or} \; y< x   \}$.
\end{center}

We have the sequence
\begin{center}
\begin{tikzcd}
0 \arrow[r] & W^{k,2}(U) \arrow[r, "d_0"] & W^{k,2}(U_1) \oplus W^{k,2} (U_2) \arrow[r, "d_1"] &  W^{k,2} (U_1 \cap U_2)
\end{tikzcd}
\end{center}
It follows from Lemma~\ref{Proposition2.2} that
\begin{center}
 $Ker(d_1)=\{f\in L^2(U)\; : \; f\mid _{L}\in W^{k,2}(L) \; \text{for any} \; L \; \text{Lipschitz in} \; U  \} = W^{k,2} _{\star} (U) $.
\end{center}
But we have a fact (we refer to Exercise 11.9 in \cite{Geov}) about Sobolev spaces:\\
\textbf{Fact:} Take $\Omega \subset \mathbb{R}^n$ open and $W\subset \Omega$ such that $\mathcal{H} ^{n-1} (W)=0$, where $\mathcal{H} ^{n-1}$ is the $(n-1)$-Hausdorff measure on $\mathbb{R}^n$. Then we have
\begin{center}
 $W^{k,2} _{\star} (\Omega \setminus W)=W^{k,2} _{\star} (\Omega)$.
 \end{center} 

That gives $$W^{k,2} _{\star} (U)=W^{k,2} _{\star} (B_r (p_0))=W^{k,2}  (B_r (p_0)).$$
So, this means that the sequence
\begin{center}
\begin{tikzcd}
0 \arrow[r] & W^{k,2}(U) \arrow[r, "d_0"] & W^{k,2}(U_1) \oplus W^{k,2} (U_2) \arrow[r, "d_1"] &  W^{k,2} (U_1 \cap U_2)
\end{tikzcd}
\end{center}
is exact. Therefore, we can define $\widehat{\mathcal{F}}^k(U)$ by 
\begin{center}
$\widehat{\mathcal{F}}^k(U):=W^{k,2}(U).$
\end{center}

\item[$(C_2)$] If $U$ is connected with Lipschitz boundary, then we define $\widehat{\mathcal{F}}^k (U)=W^{k,2} (U)$.
\item[$(C_3)$] If $U$ is a cusp, meaning that there are  $r>0$ and two definable $C^1$-curves $\gamma _1 , \gamma _2 : [0,a[ \longrightarrow \mathbb{R}^2$ such that $\gamma _1 (0) = \gamma _2 (0)$, $\angle(\gamma _1 '(0),  \gamma _2 ' (0))=0$, and
$$U=R(r,\gamma _1 , \gamma _2).$$
Then we define: $\widehat{\mathcal{F}}^k (U)=W^{k,2} (U)$.
\item[$(C_4)$] Let $U$ be the complement of a cusp. This means that there are  $r>0$ and two definable $C^1$-curves $\gamma _1 , \gamma _2 : [0,a] \longrightarrow \mathbb{R}^2$ such that $\gamma _1 (0) = \gamma _2 (0)=p_0$, $\angle(\gamma _1 '(0), \gamma _2 '(0) )=2\pi$, and

\begin{center}
$U=R(r,\gamma _1, \gamma _2)$.
\end{center}

Take $\gamma _3 , \gamma _4: [0,a[\longrightarrow \mathbb{R}^2$ such that $\gamma _3 (0)=\gamma _4 (0)=p_0$, $\angle(\gamma _1 ' (0), \gamma _{3} ' (0)) \notin \{0, 2\pi \}$,  $\angle(\gamma _4 ' (0), \gamma _{2} ' (0)) \notin \{0, 2\pi \}$, and $\angle(\gamma _3 ' (0), \gamma _{4} ' (0)) \neq 0$.

In this case, the sequence 

\begin{center}
$0\to W^{k,2} (U)\to W^{k,2} (R(r,\gamma _1, \gamma _4))\oplus W^{k,2} (R(r,\gamma _3, \gamma _2)) \to W^{k,2} (R(r,\gamma _3, \gamma _4)) $
\end{center}
is not exact in general. 
 
\begin{example} Assume that $s>\frac{1+\sqrt{5}}{2}$. Then we have the continuous embedding $W^{s,2}(\mathbb{R}^2) \hookrightarrow C^{0, s-1}(\mathbb{R}^2)$ (see \cite[Theorem 11.32]{Geov}). Take $U,V\in X_{\mathcal{A}}(\mathbb{R}^2)$ defined by 
\begin{center}
 $U=(]-1,1[\times ]-1,0[)\cup (]-1,0[\times ]-1,1[)$,
 \end{center} 
and 

\begin{center}
$V=(]-1,0[\times ]-1,1[) \cup  \{ (x,y)\; : \; 0\leqslant x <1 \; \text{and} \; x^{s+1} < y <1  \}$.
\end{center}

Define $F\in L^2(U\cup V)$ by $F\mid _U =0$, $F(x,y)=x^{s}$ for $x\in [0,1[$ and $x^{s+1}<y<1$. It is clear that $F\mid _U \in W^{s,2}(U)$ and $F\mid _V \in W^{s,2}(V)$ but $F\notin W^{s,2} (U\cup V)$, because if $F\in W^{s,2} (U\cup V)$ then there will be a $C^{0,s-1}$ extension $\widehat{F}$ of $F$ to $\mathbb{R}^2$, but this cannot be true because
$$
\frac{ \lvert \widehat{F}(x, x^{s+1}) - \widehat{F}(x, 0) \rvert }
     { \lvert x^{s+1} - 0 \rvert^{s-1} }
= x^{s - (s+1)(s-1)} = x^{-s^2 + s + 1}
$$
which is not bounded near $x=0$.
\begin{flushright}
$\square$
\end{flushright}

\end{example} 

\begin{question} 
What happens in this case of $s\in [\frac{1}{2},\frac{1+\sqrt{5}}{2}]$ ? Is the sequence 
\begin{center}
$0\to W^{s,2} (U)\to W^{s,2} (R(r,\gamma _1, \gamma _4))\oplus W^{s,2} (R(r,\gamma _3, \gamma _2)) \to W^{s,2} (R(r,\gamma _3, \gamma _4))$
\end{center}
exact?
\end{question}
Now we define $\widehat{\mathcal{F}}^k (U)$  to be the kernel of the map
\begin{center}
$J:W^{k,2} (R(r,\gamma _1, \gamma _4))\oplus W^{k,2} (R(r,\gamma _3, \gamma _2)) \rightarrow W^{k,2} (R(r,\gamma _3, \gamma _4))$.
\end{center}
We use the notation 
\begin{center}
$\widehat{\mathcal{F}}^k (U)=Ker(J)=K(\gamma _3 , \gamma _4)$.
\end{center}
We need to prove that $K(\gamma _3 , \gamma _4)$ doesn't depend on $\gamma _3$ and $\gamma _4$, but only on $U$. Take $\alpha, \beta :[0,a[\longrightarrow \mathbb{R}^2$ two definable curves that satisfy the same conditions as $\gamma _3$ and $\gamma _4$. Let's prove that 
\begin{center}
$K(\gamma _3 , \gamma _4)=K(\alpha , \beta)$.
\end{center}
It is sufficient to show that $K(\gamma _3 , \gamma _4)\subset K(\alpha , \beta)$ (the reverse inclusion follows in the same way).
We can identify $K(\gamma _3 , \gamma _4)$ and $K(\alpha , \beta)$ with the spaces 
\begin{center}
$K(\gamma _3 , \gamma _4)=\{f\in \mathcal{D}'(U) \; : \; f_{\mid R(r,\gamma _1, \gamma _4)}\in W^{k,2} (R(r,\gamma _1, \gamma _4)) \; \text{and} \;  f_{\mid R(r,\gamma _3, \gamma _2)}\in W^{k,2} (R(r,\gamma _3, \gamma _2))    \}$
\end{center}

\begin{center}
$K(\alpha , \beta)=\{f\in \mathcal{D}'(U) \; : \; f_{\mid R(r,\gamma _1, \beta)}\in W^{k,2} (R(r,\gamma _1, \beta)) \; \text{and} \;  f_{\mid R(r,\alpha, \gamma _2)}\in W^{k,2} (R(r,\alpha, \gamma _2))    \}$.
\end{center}

We can distinguish four possible cases:

\begin{itemize}
\item[Case 1:] $Im(\alpha) \setminus \{ p_0 \} \subset R(r,\gamma _3 , \gamma _4)$ and $Im(\beta) \setminus \{ p_0 \} \subset R(r,\gamma _3 , \gamma _4)$. 
\item[Case 2:] $Im(\alpha) \setminus \{ p_0 \} \subset R(r,\gamma _3 , \gamma _4)$ and $Im(\beta) \setminus \{ p_0 \} \subset R(r,\gamma _4 , \gamma _2)$.
         \item[Case 3:]  $Im(\alpha) \setminus \{ p_0 \} \subset R(r,\gamma _1 , \gamma _3)$ and $Im(\beta) \setminus \{ p_0 \} \subset R(r,\gamma _3 , \gamma _4)$.
         \item[Case 4:]  $Im(\alpha) \setminus \{ p_0 \} \subset R(r,\gamma _1 , \gamma _3)$ and $Im(\beta) \setminus \{ p_0 \} \subset R(r,\gamma _1 , \gamma _3)$.
\end{itemize}

The first case is obvious, because in this case we have $R(r,\gamma _1 , \beta) \subset R(r,\gamma _1 , \gamma _4)$ and $R(r,\alpha,\gamma _2) \subset R(r,\gamma _3 , \gamma _2)$. The cases 3 and 4 can be proven using the same computation as Case 2.\\
\textbf{Proof in Case 2:} Take $f\in K(\gamma _3 , \gamma _4)$. In this case, since $R(r,\alpha , \gamma _2)\subset R(r,\gamma _3 , \gamma _2)$, we have $f_{\mid R(r,\alpha , \gamma _2)}\in W^{k,2} (R(r,\alpha , \gamma _2))$. Now let's prove that 

\begin{center}
$f_{\mid R(r,\gamma _1 , \beta)}\in W^{k,2} (R(r,\gamma _1 , \beta))$.
\end{center}

Take $c:[0,a[\longrightarrow \mathbb{R}^2$ a definable curve such that $c(0)=p_0$, $\angle(\gamma _1 '(0),c'(0))>0$, $\angle(c'(0), \gamma _2 '(0))>0$, $\angle(\beta '(0),c'(0))>0$, and $Im(c)\subset R(r,\beta , \gamma _2) $. We can see that $f_{\mid R(r,\gamma _1 ,\gamma _4)}\in W^{k,2} (R(r,\gamma _1 ,\gamma _4))$ and $f_{\mid R(r,\gamma _3 ,c)}\in W^{k,2} (R(r,\gamma _3 ,c))$ (note that $R(r,\gamma _3 ,c)\subset R(r,\gamma _3 ,\gamma _2)$). Now, by Lemma~\ref{Proposition2.2} the sequence

\begin{center}
$0\to W^{k,2} (R(r,\gamma _1, c))\to W^{k,2} (R(r,\gamma _1 ,\gamma _4))\oplus W^{k,2} (R(r,\gamma _3 ,c)) \to W^{k,2} (R(r,\gamma _3 ,\gamma _4))$
\end{center}

is exact. Hence, $f_{\mid R(r,\gamma _1, c)} \in W^{k,2} (R(r,\gamma _1, c))$, which implies $f_{\mid R(r,\gamma _1, \beta)} \in W^{k,2} (R(r,\gamma _1, \beta))$. 
\begin{flushright}
$\square$
\end{flushright}

\item[$(C_5)$] If there exists a definable $C^1$-curve $\gamma : [0,a[ \longrightarrow \mathbb{R}^2$ such that $\gamma (0)=p_0$ and 

\begin{center}
$U=B_r(p_0) \setminus Im(\gamma)$,
\end{center}

take $\gamma _1, \gamma _2 :[0,a[ \longrightarrow \mathbb{R}^2$ two definable  $C^1$-curves such that  $\angle(\gamma _1 '(0),\gamma _2 ' (0))\neq 2\pi $, $\angle(\gamma _1 '(0),\gamma  ' (0))\neq 0 $ and $\angle(\gamma '(0),\gamma _2 ' (0))\neq 0 $. By Sobolev embeddings and continuity reasons, we can find an example such that the sequence 

\begin{center}
$0\to W^{k,2} (U)\to W^{k,2} (R(r,\gamma , \gamma _2))\oplus W^{k,2} (R(r,\gamma _1 , \gamma )) \to W^{k,2} (R(r,\gamma _1, \gamma _2))$
\end{center}

is not exact. 

\begin{example} Assume that $k>1$. So we have an embedding $W^{k,2}(\mathbb{R}^2)\hookrightarrow C^0 (\mathbb{R}^2)$. Take $U,V\in X_{\mathcal{A}}(\mathbb{R}^2)$ defined by 

\begin{center}
 $U=(]-1,1[\times ]-1,0[)\cup (]-1,0[\times ]-1,1[)$,
 \end{center}
 and

 \begin{center}
 $V=(]-1,0[\times ]-1,1[)\cup (]-1,1[\times ]0,1[)$.
 \end{center} 
Define $F\in L^2(U\cup V)$ by $F\mid _{ U \cup (\{0\} \times ]0,1[) } =0$ and $F(x,y)=e^{-\frac{1}{x^{2}}}$ for $0<x<1$ and $0<y<1$. Then $F\mid _U \in W^{k,2}(U)$ and $F\mid _V \in W^{k,2}(V)$, but $F\notin W^{k,2}(U\cup V)$, as it cannot be extended to a continuous function on $\mathbb{R}^2$.

\begin{flushright}
$\square$
\end{flushright}
\end{example}

\begin{question}
What happens in this case if we replace $k$ with $s\in [\frac{1}{2} , 1]$ ? Is the sequence
\begin{center}
$0\to W^{s,2} (U)\to W^{s,2} (R(r,\gamma , \gamma _2))\oplus W^{s,2} (R(r,\gamma _1 , \gamma )) \to W^{s,2} (R(r,\gamma _1, \gamma _2))$
\end{center}
exact?
\end{question}

So this motivates us to define $\widehat{\mathcal{F}}^k(U)$ to be the kernel of the map 

\begin{center}
$J:W^{k,2} (R(r,\gamma , \gamma _2))\oplus W^{k,2} (R(r,\gamma _1 , \gamma )) \to W^{k,2} (R(r,\gamma _1, \gamma _2))$.
\end{center}

That is,
\begin{center}
$\widehat{\mathcal{F}}^k(U)=Ker(J)=K(\gamma _1, \gamma _2)$.
\end{center}

Applying the same techniques as in the previous case, we can show that $K(\gamma _1, \gamma _2)$ does not depend on $\gamma _1$ and  $\gamma _2$. 
\begin{rmk}
Note that this is a special case of the previous case.
\end{rmk}

\item[$(C_6)$] If $U$ is as described in case $(C_6)$, we define $\widehat{\mathcal{F}}^k(U)$ to be the direct sum of the sections of $\widehat{\mathcal{F}}^k$ on the connected components of $U \cap B_r(p_0)$.

\end{itemize}

\subsection{The global definition of $\mathcal{F}^k$ on the site $X_{\mathcal{A}}(\mathbb{R}^2)$  .}

Take $k\in \mathbb{N}$. For every $U\in X_{\mathcal{A}}(\mathbb{R}^2)$, we define $\mathcal{F}^k  (U)$ by

\begin{center}
$\mathcal{F}^k  (U):=\{f\in W^{k,2} _{loc}(U) \; : \; \text{for  each} \; x\in \partial U, \; \exists r>0 \; \text{such  that} \; B(x,r)\cap U \in \{ C_1 ,..., C_6 \} \; \text{and} \; f\mid _{B(x,r)\cap U}\in \widehat{\mathcal{F}}^{k}(B(x,r)\cap U) \;    \}$.
\end{center}

\textbf{Claim:}  $\mathcal{F}^k$ is a sheaf on the  site $X_{\mathcal{A}}(\mathbb{R}^2)$.
\begin{proof}
We need to prove that for $U\in X_{\mathcal{A}}(\mathbb{R}^2)$ and $V\in X_{\mathcal{A}}(\mathbb{R}^2)$, the 
 sequence

 \begin{center}
  $0 \to \mathcal{F}^k (U\cup V) \to \mathcal{F}^k (U) \oplus \mathcal{F}^k (V) \to \mathcal{F}^k (U\cap V)$
  \end{center} 

is exact. It is enough to prove that if $f\in W^{k,2} _{loc}(U\cup V)$ (or even $\mathcal{D}'(U\cup V)$) such that $f \mid _{U} \in \mathcal{F}^k (U)$ and $f \mid _{V} \in \mathcal{F}^k (V) $, then one has $f\in \mathcal{F}^k (U\cup V)$. It is also enough to assume that $f$ is supported in a small neighborhood of a given point $x\in \overline{U\cup V}$ (if $(\phi _ i)$ is a partition of unity such that $\sum _{i} \phi _i =1$ near $\overline{U \cup V}$, then clearly $f=\sum _i \phi _i f$), and more precisely of a given singular point $x\in \partial U \cap \partial V$ such that $\partial U$ and $\partial V$ have different germs at $x$. \\

So take $x \in \partial U \cap \partial V$ such that $(\partial U, x)\neq (\partial V, x)$ (and also no inclusion between the two germs).

\begin{itemize}
\item \textbf{Case(A):} Assume that $U$, $V$, and $U \cup V$ do not fall under case $C_1$.\\

\textbf{Step1:} We assume that $U$, $V$, and  $U\cup V$ are locally connected near $x$; that is, the intersections of $U$, $V$, and  $U\cup V$ 
 with a sufficiently small ball centered at $x$ are connected sets. There is $r>0$ such that $B(x,r)\cap U$, $B(x,r)\cap V$, $B(x,r)\cap (U\cup V)$, $B(x,r)\cap (U\cap V) \in \{C_2 ,...,C_5 \}$, hence there are definable curves $\gamma _i :[0,a[\longrightarrow \mathbb{R}^2$ ($i=1,...,4$) such that $\gamma _i (0)=x$ and 

\begin{center}
$U\cap B(x,r)=R(r,\gamma _1 , \gamma _3)$, $(U\cap V)\cap B(x,r)=R(r,\gamma _2 , \gamma _3)$, $V \cap B(x,r)=R(r,\gamma _3 , \gamma _4)$, and $(U\cup V)\cap B(x,r)=R(r,\gamma _2 , \gamma _4)$.
\end{center}

By the definition of $\mathcal{F}^k$ and assuming that $f$ is supported in $(U\cup V )\cap B(x,r)$,  it is enough to prove that $f\mid _{(U\cup V)\cap B(x,r)} \in \widehat{\mathcal{F}}^k ((U\cup V)\cap B(x,r))= \widehat{\mathcal{F}}^k (R(r,\gamma _1 , \gamma _4))$ knowing that $f\mid _{U\cap B(x,r)} \in \widehat{\mathcal{F}}^k (U\cap B(x,r))$ and $f\mid _{ V\cap B(x,r)} \in \widehat{\mathcal{F}}^k ( V\cap B(x,r))$. We will discuss several cases for this:   

\begin{itemize}

\item \textbf{case(1)} \textbf{ $\angle(\gamma _1 ' (0),\gamma _4 ' (0))=0$ :} In this case, $U$, $V$, $U\cap V$ and $U\cup V$ are cusps near $x$. So we can find $U'$ and $V'$ Lipschitz such that $U' \cup V'$ is Lipschitz, $U\cap B(r,x) \subset U'$, $V\cap B(r,x)\subset V'$, and $U' \cap V'=(U \cap V) \cap B(r,x)$.  In this case, we have

\begin{center}
$\widehat{\mathcal{F}}^k ((U\cup V )\cap B(x,r))=W^{k,2}((U\cup V )\cap B(x,r))$\\
$\widehat{\mathcal{F}}^k (U \cap B(x,r))=W^{k,2}(U \cap B(x,r))$\\
$\widehat{\mathcal{F}}^k (V \cap B(x,r))=W^{k,2}( V \cap B(x,r))$

\end{center}

Take $f_{U'} \in W^{k,2}(U')$ an extension of $f\mid  _{U\cap B(x,r)}$ and $f_{V'} \in W^{k,2}(V')$ an extension of $f\mid _{V \cap B(x,r)}$, and define $F \in \mathcal{D}' (U' \cup V')$ by gluing $f_{U'}$ and $f_{V'}$. By Lemma~\ref{Proposition2.2} we have that $F\in W^{k,2}(U' \cup V')$ and since $F\mid _{(U\cup V)\cap B(x,r)}=f \mid _{(U\cup V)\cap B(x,r)}$,  $f\in W^{k,2}((U\cup V)\cap B(x,r))=\widehat{\mathcal{F}}^k((U\cup V)\cap B(x,r))$.

\item \textbf{case(2)}  \textbf{ $\angle(\gamma _1 ' (0),\gamma _4 ' (0))\neq 0, 2\pi$ :} In this case, either $U$ is Lipschitz or $V$ is Lipschitz in a neighborhood of $x$. If both are Lipschitz, then the proof follows from Lemma~\ref{Proposition2.2}. Let's assume that $U$ is not Lipschitz. In this case, we can find $U'$ Lipschitz such that $U' \cup V$ is Lipschitz, $U\cap B(r,x) \subset U'$, and $U' \cap V=(U \cap V) \cap B(r,x)$.  As in the previous case, we have

\begin{center}
$\widehat{\mathcal{F}}^k ((U\cup V )\cap B(x,r))=W^{k,2}((U\cup V )\cap B(x,r))$\\
$\widehat{\mathcal{F}}^k (U \cap B(x,r))=W^{k,2}(U \cap B(x,r))$\\
$\widehat{\mathcal{F}}^k (V \cap B(x,r))=W^{k,2}( V \cap B(x,r))$

\end{center}

Take $f_{U'} \in W^{k,2}(U')$ an extension of $f\mid  _{U\cap B(x,r)}$, and define $F \in \mathcal{D}' (U' \cup V)$ by gluing $f_{U'}$ and $f\mid _{V}$. By Lemma~\ref{Proposition2.2} we have that $F\in W^{k,2}(U' \cup V)$ and since $F\mid _{(U\cup V)\cap B(x,r)}=f \mid _{(U\cup V)\cap B(x,r)}$,  $f\in W^{k,2}((U\cup V)\cap B(x,r))=\widehat{\mathcal{F}}^k((U\cup V)\cap B(x,r))$.

\item \textbf{case(3)}  \textbf{ $\angle(\gamma _1 ' (0),\gamma _4 ' (0))= 2\pi$ :} 

\begin{itemize}
\item \textbf{Subcase3.1:} If $U\cap B(x,r)$ and $V\cap B(x,r)$ are Lipschitz, then we have by definition that

\begin{center}
$\widehat{\mathcal{F}}^k ((U\cup V)\cap B(x,r))=K(\gamma _{2}, \gamma _{3})$
\end{center}
And this gives that $f\mid _{(U\cup V)\cap B(x,r)} \in \widehat{\mathcal{F}}^k((U\cup V)\cap B(x,r)) $. 
 
\item \textbf{Subcase3.2:} If $\angle(\gamma _{1}' (0), \gamma _{3}'(0))=2\pi$ and $\angle(\gamma _{2}' (0), \gamma _{4}'(0))=2\pi$, then in this case we can find $\alpha$ and $\beta$ in $(U\cap V)\cap B(x,r)$ with a starting point $x$, such that

\begin{center}
 $\widehat{\mathcal{F}}^k ((U\cup V)\cap B(x,r))=K(\alpha, \beta)$
 \end{center} 
And since $f\mid _{R(r,\gamma _1 , \beta)}\in W^{k,2}(R(r,\gamma _1 , \beta))$ and $f\mid _{R(r, \alpha , \gamma _4)}\in W^{k,2}(R(r, \alpha , \gamma _4))$,  we have $f\mid _{(U\cup V)\cap B(x,r)} \in \widehat{\mathcal{F}}^k((U\cup V)\cap B(x,r)) $.

\item \textbf{Subcase3.3:} If  $\angle(\gamma _{1}' (0), \gamma _{3}'(0))=0$ and $\angle(\gamma _{2}' (0), \gamma _{4}'(0))=2\pi$, then in this case we can find  $\alpha , \; \beta :[0,a[\longrightarrow \mathbb{R}^2$ such that $Im(\beta), \; Im(\alpha)\subset  V\cap B(x,r)$, and 

\begin{center}
$\widehat{\mathcal{F}}^k ((U\cup V)\cap B(x,r))=K(\alpha, \beta)$
\end{center}
 
we have that $f\mid _{R(r, \alpha , \gamma _4)}\in W^{k,2}(R(r, \alpha , \gamma _4))$, and by applying \textbf{case (2)} on $R(r,\gamma _1, \gamma _3)$, $R(r,\gamma _2, \beta)$, we deduce that also $f\mid _{R(r,\gamma _1 , \beta)}\in W^{k,2}(R(r,\gamma _1 , \beta))$, hence we have that $f\mid _{(U\cup V)\cap B(x,r)} \in \widehat{\mathcal{F}}^k((U\cup V)\cap B(x,r)) $.

\item \textbf{Subcase3.4:} If  $\angle(\gamma _{1}' (0), \gamma _{3}'(0))=2\pi$ and $\angle(\gamma _{2}' (0), \gamma _{4}'(0))=0$, then it is the symmetry statement of Subcase 3.3. 
 
\end{itemize}

\begin{rmk} Note that the case where
 $\gamma _1 (t)= \gamma _4 (t)$ is included in the \textbf{case(3)}.
\end{rmk}

\end{itemize}

\textbf{Step2:} We don't assume here the local connectivity of $U$, $V$, and $U\cup V$.

In this case, there are a finite number of definable curves ( with beginning point $x$ )
 $\gamma _1,\lambda _1...,\gamma _m, \lambda _m :[0,a[\longrightarrow \mathbb{R}^2$, $\alpha _1,\beta _1...,\alpha _l, \beta _l :[0,a[\longrightarrow \mathbb{R}^2$ 
such that  

\begin{center}
$B(x,r)\cap U = \sqcup _i R(r,\gamma _i , \lambda _i )$ and $B(x,r)\cap V = \sqcup _i R(r,\alpha _i , \beta _i )$. 
\end{center} 
 
Take $f\in \mathcal{D}'((U\cup V)\cap B(x,r))$ such that $f\mid _{U\cap B(x,r)} \in \widehat{\mathcal{F}}^k (U \cap B(x,r))$ and $f\mid _{ V\cap B(x,r)} \in \widehat{\mathcal{F}}^k ( V \cap B(x,r))$, clearly this implies that

\begin{center}
  $f\mid _{R(r,\gamma _i , \lambda _i )} \in \widehat{\mathcal{F}}^k (R(r,\gamma _i , \lambda _i ))$ and $f\mid _{R(r,\alpha _j , \beta _j )} \in \widehat{\mathcal{F}}^k (R(r,\alpha _j , \beta _j ))$ for all $i$ and $j$.
  \end{center}  
 
We want to prove that $f\mid _{(U\cup V)\cap B(x,r)} \in \widehat{\mathcal{F}}^k ((U \cup V)\cap B(x,r))$. By the local definition of $\widehat{\mathcal{F}}^k$, it is enough to prove that $f\mid _{C}\in \widehat{\mathcal{F}}^k(C)$ for every connected component $C$ of $(U \cup V)\cap B(x,r)$. So take $C$ a connected component of $(U \cup V)\cap B(x,r)$,  we can reorder the curves $\gamma _1,\lambda _1...,\gamma _m, \lambda _m ,\alpha _1,\beta _1...,\alpha _l, \beta _l $ to find definable curves $c_1 ,...,c_n$ such that

\begin{center}
 $C= \cup _{i=1} ^{n-2} R(r,c_{i} , c_{i+2})$, $f\mid _{R(r,c_{i} , c_{i+2})}\in \widehat{\mathcal{F}}^k (R(r,c_{i} , c_{i+2}))$, and $R(r,c_{i} , c_{i+2}) \cap R(r,c_{i+1} , c_{i+3})\neq \emptyset$ for any $i\in \{ 1,...,n-3  \}$.  
 \end{center}  

Using induction and the first step we deduce that $f\mid _{C}\in \widehat{\mathcal{F}}^k (C)$. 
 
\item \textbf{Case(B):} Let's be out of the assumption of \textbf{Case(A)} . Since we assumed that the germs $(\partial U, x)$ and $(\partial V, x)$ are not comparable, the only non-trivial case is when $U$,$V\in \{ C_2 ,..., C_6  \}$ and $U\cup V$ is like $C_1$. Let $L$ be a Lipschitz open subset in $U\cup V$. If $x\notin \overline{L}$, then $f\mid _{L} \in W^{k,2}(L)$ because for any $p\in \overline{L}$ there is a neighborhood $O_p$ of $p$ in $U$ or $V$ such that $f\mid _{O_p} \in W^{k,2}(O_p)$. Now, if $x\in \overline{L}$ then in this case near $x$, $L$ is like $C_2$ and covered by two open sets $U_L \in \{C_2 ,..., C_6   \}$ and $V_L \in \{C_2 ,..., C_6   \}$ such that $f\mid _{U_{L}}\in \widehat{\mathcal{F}}^k(U_{L})$ and  $f\mid _{V_{L}}\in \widehat{\mathcal{F}}^k(V_{L})$, and by the discussion of \textbf{Case(A)}, it follows that $f\mid _{L} \in \widehat{\mathcal{F}}^k(L)=W^{k,2}(L)$.

\end{itemize}

\end{proof}

\begin{rmk} \label{remark2.8} Take $k\in \mathbb{N}$. By analyzing each case, we can show that 

\begin{itemize}

\item[$(1)$] Let $U\in X_{\mathcal{A}}(\mathbb{R}^2)$, such that $U$ falls into  $C_1$ to $C_6$. Then, we have
\begin{center}
$\mathcal{F}^k (U)=\widehat{\mathcal{F}} ^k (U) $.
\end{center}
\item[$(2)$] If $W\in X_{\mathcal{A}}(\mathbb{R}^2)$ has only cuspidal singularities (singularities on the boundary of $W$ are Lipschitz or of type $C_3$), then

\begin{center}
$\mathcal{F} ^k (W)=W^{k,2}(W)$.
\end{center}

Consequently, if $U$ and $V$ belong to $X_{\mathcal{A}}(\mathbb{R}^2)$, such that $U$, $V$, $U \cap V$, and $U \cup V$ possess only cuspidal singularities, the sequence
\begin{center}
$0 \to W^{k,2}(U \cup V) \to W^{k,2}(U) \oplus W^{k,2}(V) \to W^{k,2}(U \cap V) \to 0$
\end{center}
is exact.

\item[$(3)$] For any $U \in X_{\mathcal{A}}(\mathbb{R}^2)$, the space $\mathcal{F}^k(U)$ naturally carries a Hilbert structure. Consider $\mathcal{L}=(L_1, L_2, ..., L_m)$ as an L-regular decomposition of $U$. Since each open L-regular set in $\mathbb{R}^2$ only contains cuspidal singularities, the following mapping
\[
\mathcal{N}_{\mathcal{L}}: \mathcal{F}^k(U) \longrightarrow \mathbb{R}, \quad f \mapsto \mathcal{N}_{\mathcal{L}}(f) = \sum_{\dim(L_i)=2} \| f_{|L_i} \|_{W^{k,2}(L_i)},
\]
defines a Hilbert structure on $\mathcal{F}^k(U)$ that is independent of $\mathcal{L}$. Furthermore, if $U$ exclusively has cuspidal singularities, this norm coincides with the Sobolev norm $\| \cdot \|_{W^{k,2}(U)}$.
\end{itemize}

\begin{proof}
Let's address each part of the proof step by step:

\begin{itemize}
\item \textbf{(1)} We proceed by considering different cases. The cases $C_1$ and $C_2$ follow straightforwardly from the fact that any $x\in \partial U$ (except for the center of the punctured disk) has a Lipschitz boundary in $U$. The case $C_6$ is a consequence of the additive property of $\mathcal{F}^k$ and the other cases. Therefore, we focus on proving $C_3$ and $C_4$ (where $C_5$ is analogous to $C_4$).

\begin{itemize}
\item \textbf{$C_3$ :} In this case, $U=R(r,\alpha, \beta)$ represents a cusp between boundary curves $\alpha$ and $\beta$. If $f\in \mathcal{F}^k (U)$, then for any $x\in \overline{U}$, there exists $r_x>0$ such that $f \mid _{U_{r_x} (x)} \in \widehat{\mathcal{F}} ^k  (U_{r_x} (x))=W^{k,2} (U_{r_x} (x))$. This holds because locally, on the boundary of $U$, the types are limited to $C_2$ and $C_3$. Thus, by using a partition of unity argument, we find that $f \in W^{k,2} (U)=\widehat{\mathcal{F}} ^ k (U)$. Similarly, if $f\in \widehat{\mathcal{F}} ^ k (U)= W^{k,2} (U)$, it is evident that $f\in \mathcal{F} ^ k (U)$ since $W^{k,2}$ is always a subspace of $\mathcal{F} ^ k$.

\item \textbf{$C_4$ :} In this case, we have two definable $C^1$-curves $\gamma _1 , \gamma _2 : [0,a[ \longrightarrow \mathbb{R}^2$ such that $\gamma _1 (0) = \gamma _2 (0)=p_0$, $\angle(\gamma _1 '(0),  \gamma _2 ' (0))=2\pi$, and

\begin{center}
$ U=R(r,\gamma _1 , \gamma _2)$.
\end{center}

Let $\gamma _3 , \gamma _4: [0,a[\longrightarrow \mathbb{R}^2$ such that $\gamma _3 (0)=\gamma _4 (0)=p_0 \in \mathbb{R}^2$, $\angle(\gamma _1 ' (0), \gamma _{3} ' (0))>0$, and $\angle(\gamma _4 ' (0), \gamma _{2} ' (0))>0$. Consequently,

\begin{center}
$\widehat{\mathcal{F}} ^k (U)=\{f\in \mathcal{D}'(U) \; : \; f_{\mid R(r,\gamma _1, \gamma _4)}\in W^{k,2} (R(r,\gamma _1, \gamma _4)) \; \text{and} \;  f_{\mid R(r,\gamma _3, \gamma _2)}\in W^{k,2} (R(r,\gamma _3, \gamma _2))    \}$.
\end{center}
For $f\in \widehat{\mathcal{F}} ^k (U)$ and  $x\in \partial U$, we can choose a sufficiently large $r$ so that $U_r (x)=U$ and $f \mid {U_r (x)} \in \widehat{\mathcal{F}} ^ k (U_r (x))$, implying $f\in \mathcal{F} ^ k (U)$. Conversely, consider $f\in \mathcal{F} ^ k (U)$. For the point $p_0$, we can find $r'>0$ such that $U_{r'} (p_0)=R(r',\gamma _1 , \gamma _2 )$ and $f \mid {U_{r'} (p_0)} \in \widehat{\mathcal{F}} ^ k (U_{r'} (p_0))$ due to the definition. This leads to

\begin{center}
 $\widehat{\mathcal{F}} ^ k (U_{r'} (p_0))=\{f\in \mathcal{D}'(U) \; : \; f_{\mid R(r',\gamma _1, \gamma _4)}\in W^{k,2} (R(r',\gamma _1, \gamma _4)) \; \text{and} \;  f_{\mid R(r',\gamma _3, \gamma _2)}\in W^{k,2} (R(r',\gamma _3, \gamma _2))    \} \quad (\star)$.
 \end{center} 
Considering that $U$ is Lipschitz near each point $x\in \partial U \setminus \{ p_0 \}$, it follows that $f$ is Sobolev near each of these points. Combining this with $(\star)$ shows that $f_{\mid R(r,\gamma _1, \gamma _4)}\in W^{k,2} (R(r,\gamma _1, \gamma _4)) \; \text{and} \;  f_{\mid R(r,\gamma _3, \gamma _2)}\in W^{k,2} (R(r,\gamma _3, \gamma _2))$, implying $f\in \widehat{\mathcal{F}} ^ k (U)$.
\end{itemize}

\item \textbf{(2)} When $W\in X_{\mathcal{A}}(\mathbb{R}^2)$ only possesses cuspidal singularities, consider any point $x\in \overline{W}$. There exists $r_x >0$ such that $W_{r_x} (x)$ is either Lipschitz or a standard cusp. Therefore, $\widehat{\mathcal{F}} ^ k (W_{r_x} (x)) = W^{k,2} (W_{r_x} (x))$. By using a partition of unity $(\phi _x)_{x\in W}$ for the covering $(W_{r_x}(x))_{x\in W}$, it's evident that

\begin{center}
$f=\sum _x \phi _x f\mid _{W_{r_x} (x)} \in W^{k,2} (W)$.
\end{center}
This establishes exactness on cuspidal domains.

\item \textbf{(3)} This result follows directly from the L-regular decomposition and $(2)$.
\end{itemize}

Thus, we have demonstrated each part of the remark.
\end{proof}

\end{rmk}

\textbf{Notation:} For $k\in \mathbb{N}$ and $U\in X_{\mathcal{A}} (\mathbb{R}^2)$ with only cuspidal singularities, we denote by $E_U$ a linear extension operator

\begin{center}
$E_U : W^{k,2} (U) \longrightarrow W^{k,2} (\mathbb{R}^2)$\\
$\; \; \; \; \; \; \; \; \; \; \; \; \; \; \; \; \; \; \; \; \; \; \; \; \; \; \; \; \; \; \; \; \; \; \; \; \; \; \; f\mapsto E_U (f) \; \text{with} \; (E_U (f)) _{\mid U} =f$. 
\end{center}
\section{Cohomology of the sheaf $\mathcal{F}^k$.}
For the cohomology computation, we need to introduce the concept of "good directions".

\textbf{Good directions:}
Consider a definable subset $A\subset \mathbb{R}^n$, with $\dim A <n$, and a unit vector $\lambda \in \mathbb{S}^{n-1}$. We say that $\lambda$ is a \emph{good direction} for $A$ if there exists $\varepsilon >0$ such that for all $x\in A^{reg}$, we have

\begin{center}
$d(\lambda , T_x A^{reg})>\varepsilon$.
\end{center}

Given $\lambda \in \mathbb{S}^{n-1}$, let $\pi _\lambda :\mathbb{R}^n \longrightarrow N_\lambda =<\lambda > ^{\perp}$ be the orthogonal projection, and let $x_\lambda$ denote the coordinate of $x$ along $<\lambda>$.

Consider definable sets $A\subset \mathbb{R}^n$ and $A' \subset N_\lambda$, along with a definable function $f:A' \longrightarrow \mathbb{R}$. We say that $A$ is the graph of the function $f$ with respect to $\lambda$ if 

\begin{center}
$A=\{x\in \mathbb{R}^n \; : \; \pi _\lambda (x)\in A' \; \text{and} \; x_\lambda =f(\pi _\lambda (x))     \}$.
\end{center}

It's important to note that $\lambda \in \mathbb{S}^{n-1}$ is a good direction for $A$ if and only if $A$ is a union of graphs of Lipschitz definable functions over certain subsets of $N_\lambda$. It's worth mentioning that the sphere $\mathbb{S}^n$ doesn't possess any good direction. To address this, we need to partition it into finite subsets, each of which has a distinct good direction. However, there exists a beautiful theorem by G. Valette \cite{V} which asserts that after applying a bi-Lipschitz deformation to the ambient space, a good direction can always be found:

\begin{thm}\label{Good direction} For any definable $X\subset \mathbb{R}^n$ with $\dim (X)<n$, there exists a definable bi-Lipschitz function $h:\mathbb{R}^n \longrightarrow \mathbb{R}^n$ such that $h(X)$ exhibits a  good direction $\lambda \in \mathbb{S}^{n-1}$.
\end{thm}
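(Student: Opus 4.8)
The plan is to reduce the statement to a straightening problem and then resolve it by induction on the ambient dimension $n$. By the characterization recorded just before the statement, a unit vector $\lambda$ is a good direction for a definable set $A$ precisely when $A$ is a finite union of graphs of Lipschitz definable functions over definable subsets of $N_\lambda$. Hence it suffices to produce a definable bi-Lipschitz homeomorphism $h:\mathbb{R}^n\to\mathbb{R}^n$ such that $h(X)$ is a finite union of such graphs over the fixed hyperplane $N_{e_n}=\mathbb{R}^{n-1}\times\{0\}$. As a first normalization, choose (after a generic linear change of coordinates) a cell decomposition of $\mathbb{R}^n$ compatible with $X$ and adapted to the projection $\pi:\mathbb{R}^n\to\mathbb{R}^{n-1}$ forgetting the last coordinate. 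Since $\dim X<n$, $X$ contains no band-type cell, so $X=\bigcup_j\Gamma_{\xi_j}$ with each $\xi_j:B_j\to\mathbb{R}$ a definable $C^1$ function on a cell $B_j\subset\mathbb{R}^{n-1}$. The vector $e_n$ is already a good direction for $X$ exactly when the gradients $\nabla\xi_j$ are uniformly bounded; the entire difficulty is that in general they blow up, and the obstruction is concentrated on a lower-dimensional \emph{polar set} $Z\subset\mathbb{R}^{n-1}$ — the projection of the locus where (limit) tangent planes of $X$ turn vertical, together with the projections of the cell boundaries and of the part of $X$ escaping to infinity.

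The inductive step then runs as follows. The base case $n=1$ is trivial, since $\dim X<1$ forces $X$ to be finite and every direction is good. For the step, a transversality/dimension count shows that for a generic choice of the last coordinate axis the incidence set $\{(x,\lambda):\lambda\in T_xX^{reg}\}\subset\overline{X^{reg}}\times\mathbb{S}^{n-1}$, which has dimension at most $2\dim X-1\le 2n-3$, has fibres of dimension at most $n-2$ over a generic $\lambda$; consequently one may arrange $\dim Z\le n-2<n-1$. Applying the theorem inductively in $\mathbb{R}^{n-1}$ to $Z$ (this is legitimate, the ambient dimension having strictly dropped) yields a definable bi-Lipschitz $g:\mathbb{R}^{n-1}\to\mathbb{R}^{n-1}$ for which $g(Z)$ has a good direction; composing with a rotation we may assume this direction is $e_{n-1}$, so $g(Z)$ is a finite union of Lipschitz graphs over $\mathbb{R}^{n-2}$. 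Lift $g$ to $\tilde g(x',x_n)=(g(x'),x_n)$, which is definable and bi-Lipschitz on $\mathbb{R}^n$ and carries $X$ to $\tilde g(X)=\bigcup_j\Gamma_{\xi_j\circ g^{-1}}$, a set whose polar locus is now the thin, well-positioned set $g(Z)$. (One can also first invoke Theorem~\ref{L-reg-decomposition} to present $X$ as a finite union of L-regular cells, each of which carries its own good coordinate direction; the content of the whole argument is exactly to unify these directions, which the above does inductively.)

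The remaining step — and the main obstacle — is a local-to-global surgery supported in a tubular neighbourhood $T$ of $g(Z)$. Using that $g(Z)$ is a union of Lipschitz graphs over $\mathbb{R}^{n-2}$, one identifies $T$ bi-Lipschitzly with a standard neighbourhood and works fibrewise; the o-minimal preparation theorem then describes the asymptotics of the functions $\xi_j\circ g^{-1}$ near $g(Z)$ as $\mathrm{dist}(\cdot,g(Z))^{-\alpha_j}$ times a bounded definable unit (and similarly near infinity). One then constructs an explicit definable bi-Lipschitz modification $h_0$, equal to the identity outside $T$, that absorbs these negative powers — morally, it bends the graphs $\Gamma_{\xi_j\circ g^{-1}}$ away from the vertical by a controlled, bi-Lipschitz stretch of the $x_n$-coordinate, matched continuously across the strata of $g(Z)$. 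Outside $T$ the gradients $\nabla(\xi_j\circ g^{-1})$ are already bounded, since that locus is uniformly away from the only bad set, so the composite $h=h_0\circ\tilde g$ does the job: $e_n$ is a good direction for $h(X)$. The crux, as flagged, is making the local absorbing maps honestly bi-Lipschitz and patching them compatibly along the stratification of $g(Z)$: this is where naive cell-by-cell rotations fail to glue, and where one needs the full strength of Valette's bi-Lipschitz triangulation/preparation machinery from \cite{V}.
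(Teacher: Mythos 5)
The paper does not actually prove this statement: it is quoted verbatim from Valette's work and the text points the reader to \cite{V}. So the only fair comparison is between your outline and the genuine content of that theorem, and there your proposal has a real gap. Your reduction (generic coordinates, $X$ a finite union of graphs $\Gamma_{\xi_j}$ since $\dim X<n$, a polar set $Z$ of dimension $\le n-2$ carrying the verticality, induction on the ambient dimension applied to $Z$, lifting $g$ to $\tilde g$) is a sensible normalization and does reflect the strategy in the literature on regular projections and preparation. But everything you have done up to that point only repositions the bad set; the entire content of the theorem is the "surgery" $h_0$ near $g(Z)$, and that step is not carried out. You describe what $h_0$ should achieve and then state that one needs "the full strength of Valette's bi-Lipschitz triangulation/preparation machinery from \cite{V}" to build it — which makes the argument circular as a proof of Valette's theorem.

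Worse, the mechanism you propose for $h_0$ cannot work as described. You say the map "bends the graphs $\Gamma_{\xi_j\circ g^{-1}}$ away from the vertical by a controlled, bi-Lipschitz stretch of the $x_n$-coordinate," i.e.\ a map of the form $(x',x_n)\mapsto (x',\phi(x',x_n))$ preserving the fibers of $\pi$. Such a map, if bi-Lipschitz, has $\partial_{x_n}\phi$ bounded above and below, and its differential sends a tangent vector $(v',v_n)$ to $(v',\nabla_{x'}\phi\cdot v'+\partial_{x_n}\phi\, v_n)$; if $|v_n|\gg|v'|$ then the image still has last component of size $\gtrsim |v_n|\gg|v'|$. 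In other words, fiber-preserving bi-Lipschitz maps send nearly vertical tangent directions to nearly vertical ones, so no stretch of the last coordinate can turn an unbounded gradient $\nabla(\xi_j\circ g^{-1})$ into a bounded one. To remove vertical tangencies you must move points transversally to the fibers — a shear or local rotation whose amount depends on the stratum of $g(Z)$ — and making those local moves simultaneously definable, bi-Lipschitz, and compatible across strata is exactly the difficulty the theorem resolves. As it stands, the proposal proves the easy normalizations and leaves the theorem itself unproved; if you want a self-contained argument you need to construct $h_0$ explicitly (already the model case $X=\Gamma_{\sqrt{|x|}}\subset\mathbb{R}^2$ near the origin is instructive, since there the required map must exchange the roles of the two coordinates on the steep part of the graph), or else simply cite \cite{V} as the paper does.
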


\begin{figure}[H]
  \centering
  \includegraphics[scale=0.4]{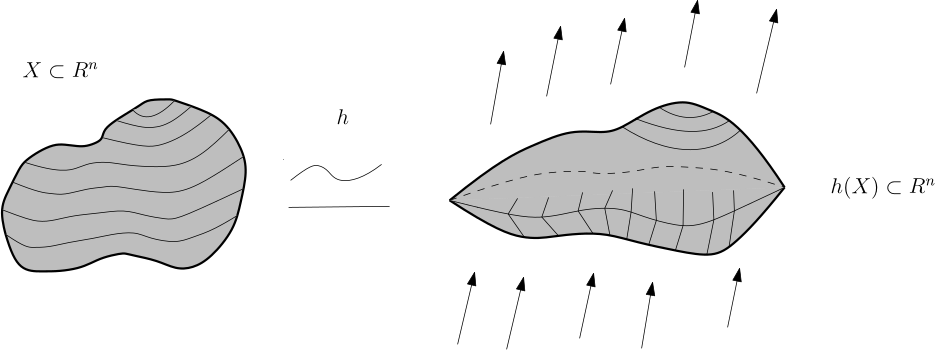}
  \caption{Example of a bi-Lipschitz transformation to get a good direction for a closed hypersurface in $\mathbb{R}^n$}   
\end{figure}

\begin{defi} Take $U\in X_{\mathcal{A}}(\mathbb{R}^2)$ and  $\mathcal{U}=(U_i)_{i\in I}$ a  cover of $U$ in the definable site $X_{\mathcal{A}}(\mathbb{R}^2)$. An \emph{adapted} cover of $\mathcal{U}$ is a definable cover $\mathcal{V}=\{ V_j \}_{j\in J}$ of $\mathbb{R}^2$ such that the following properties are satisfied:

\begin{itemize}

\item[(1)] $\mathcal{V}$ is compatible with $\mathcal{U}$, that is, each element in $\mathcal{U}$ is a finite union of elements in $\mathcal{V}$.

\item[(2)] Every finite intersection of elements in $\mathcal{V}$ is either empty or a connected domain with only cuspidal singularities, and the intersection of more than three elements is always empty.

\item[(3)] There exist $m\in \mathbb{N}$, $r>0$, and $(k_l,p_l)\in \mathbb{N}^2$ for each $l\in \{0,...,m+1 \} $ such that $\mathcal{V}=\{ V_j \}_{j\in J}$ can be rearranged as follows
	\begin{eqnarray*}
\mathcal{V} &=& \{ O_{l,p} \; : \; l\in \{0 ,1, ... , m+1 \} \; \text{and} \; p\in \{0,..., p_l \}   \}  \\
 &\cup & \{ \widehat{O}_{l,p} \; : \; l\in \{0 ,1, ... , m+1 \} \; \text{and} \; p\in \{0,..., p_l -1 \}   \} \\
 & \cup &  \{ V_{l,k} \; : \; l\in \{0 ,1, ... , m \} \; \text{and} \; k\in \{0,..., k_l +1 \}  \} \\
 & \cup & \{ B(a_{l,k},r) \; : \; a_{k,l} \in \mathbb{R}^2 , \; l\in \{0 ,1, ... , m \} \; \text{and} \; k\in \{0,..., k_l  \} \} 
\end{eqnarray*} 

\item[(4)] \begin{itemize} 
\item[$\bullet$] For each $l\in \{1, ... , m   \}$ and $p\in \{ 0 , ... , p_l \}$ there is a unique $(L(l,p) , R (l,p))\in \mathbb{N}^2$ such that the only possible non-Lipschitz singularities of $O_{l,p}$ and $\widehat{O}_{l,p}$ (only in the case of $p<p_l$) are at $a_{l-1 , L(l,p)}$ and $a_{l,R (l,p)}$.
\item[$\bullet$] For each $p\in \{ 0 , ... , p_0 \}$ there is a unique $ R (0,p)\in \mathbb{N}$ such that the only possible non-Lipschitz singularities of $O_{0,p}$ and $\widehat{O}_{0,p}$ (only in the case of $p<p_l$) is at $a_{l,R (0,p)}$.
\item[$\bullet$] For each $p\in \{ 0 , ... , p_{m+1} \}$ there is a unique $ L (m+1,p)\in \mathbb{N}$ such that the only possible non-Lipschitz singularities of $O_{m+1,p}$ and $\widehat{O}_{m+1,p}$ (only in the case of $p<p_l$) is at $a_{l,L (m+1,p)}$.  
\end{itemize}

\item[(5)] The only non empty intersections of two open sets in $\mathcal{V}$ are the open sets
 $O_{l,p} \cap \widehat{O} _{l,p}$, $\widehat{O} _{l,p} \cap O_{l,p+1}$, $O_{l,p}\cap V_{l,R(l,p)}$, $O_{l,p}\cap V_{l-1,L(l,p)}$, $B(a_{l,k},r) \cap V_{l,k} $, $B(a_{l,k},r) \cap V_{l,k+1} $, $B(a_{l-1,L(l,p)},r)\cap \widehat{O}_{l,p}$, $B(a_{l,R(l,p)},r) \cap \widehat{O}_{l,p}$, $B(a_{l-1,L(l,p)},r)\cap O_{l,p}$, and $B(a_{l,R(l,p)},r) \cap O_{l,p}$.     

\item[(6)] The only non empty intersections of three open sets in $\mathcal{V}$ are the open sets $O_{l,p}\cap V_{l,R(l,p)} \cap B(a_{l,R(l,p)},r)$, $O_{l,p}\cap V_{l-1,L(l,p)} \cap B(a_{l-1,L(l,p)},r)$, $\widehat{O}_{l,p}\cap V_{l,R(l,p)} \cap B(a_{l,R(l,p)},r)$, $\widehat{O}_{l,p}\cap V_{l-1,L(l,p)} \cap B(a_{l-1,L(l,p)},r)$, $O_{l,p} \cap \widehat{O}_{l,p} \cap B(a_{l,R(l,p)},r)$, $O_{l,p} \cap \widehat{O}_{l,p} \cap B(a_{l-1,L(l,p)},r)$, $O_{l,p+1} \cap \widehat{O}_{l,p} \cap B(a_{l,R(l,p+1)},r)$, and $O_{l,p+1} \cap \widehat{O}_{l,p} \cap B(a_{l-1,L(l,p+1)},r)$. 
\end{itemize}
\end{defi}
This definition is motivated by the construction in Figure 10 and explained in detail in the proof of Proposition~\ref{good refinement}. These covers will be essential in the computation of the cohomology of the sheaves $\mathcal{F}^k$ (see Theorem ~\ref{Cohomo}).

\textbf{\v{C}ech cohomology:} Recall that for a given sheaf $\mathcal{F}$ on a topological space $M$ and a covering $\mathcal{U}=(U_i)_{i\in I}$ with $I$ an ordered set, we have the \v{C}ech complex $\mathcal{C}^{\star } _{\mathcal{U}} (M, \mathcal{F})$  defined by
$$
\mathcal{C}^{0}_{\mathcal{U}} (M, \mathcal{F}) \overset{d_0}{\longrightarrow} \mathcal{C}^{1}_{\mathcal{U}} (M, \mathcal{F}) \overset{d_1}{\longrightarrow} \mathcal{C}^{2}_{\mathcal{U}} (M, \mathcal{F}) \longrightarrow \cdot \cdot \cdot 
$$
such that 
$$
\mathcal{C}^{m}_{\mathcal{U}} (M, \mathcal{F})= \bigoplus _{J=(i_0 < i_1 < ... <i_m)} \mathcal{F} (U_{J}), \; U_J = \cap _{j=0}^m
U_{i_j},
$$ 
and
$$
(d _{m} \alpha)_{U_J}:=(d _{m} \alpha)_{J=\{ i_0 < ... < i_{m+1}   \}}=\sum _{j=0 , ... ,m+1} (-1)^j (\alpha _{J\setminus i_j})_{\mid U_{J}}.
$$

Clearly, if $\mathcal{V}$ is a refinement of $\mathcal{U}$, then there is a canonical morphism $\mathcal{C}^ {\star } _{\mathcal{U}} (M, \mathcal{F} ) \longrightarrow \mathcal{C}^ {\star } _{\mathcal{V}} (M, \mathcal{F} )$. Thus, the \v{C}ech cohomology of degree $j$ of $M$ with respect to $\mathcal{F}$ is defined to be the colimit
$$
H^{j} (M,\mathcal{F})=\lim _{\mathcal{U}} H^{j} (\mathcal{C}^ {\star } _{\mathcal{U}} (M, \mathcal{F})).
$$

It is well known that this cohomology coincides with the cohomology of the sheaf $\mathcal{F}$ on paracompact spaces, and so on definable sets. We prove in the following proposition that any cover in the site $X _{\mathcal{A}} (\mathbb{R}^2)$ has an \emph{adapted} cover, and so we can use adapted covers to compute the cohomology of $\mathcal{F}^k$.
\begin{prop}\label{good refinement}
Take $U\in X_{\mathcal{A}} (\mathbb{R}^2)$ and $\mathcal{U}=(U_i)_{i\in I} \subset X_{\mathcal{A}} (\mathbb{R}^2)$ a cover of $U$. Then there is an adapted cover $\mathcal{V}$ of $\mathcal{U}$.
\begin{proof}
Take $\mathcal{U}=(U_i)_{i\in I}$ a definable cover of $U$. It is obvious that finding such a cover is sufficient after a bi-Lipschitz definable homeomorphism $h:\mathbb{R}^2 \longrightarrow \mathbb{R}^2$. Therefore, by Theorem~\ref{Good direction}, we can assume that $\bigcup _i \partial (U_i )$ is included in a finite union of graphs of definable Lipschitz functions $\xi _j : \mathbb{R} \longrightarrow \mathbb{R}$. We are going to construct an adapted cover $\mathcal{V}$ (see Figure 10). \\
Take 
$$
n=\max \{\sharp (\pi ^{-1}(x)\cap (\bigcup _j \Gamma _{\xi _j})) \; : \; x\in \mathbb{R}   \}<+\infty,
$$
where $\pi$ is the canonical projection:
$$\pi : \R ^2 \to \R$$
$$\; \; \; \; (x,y) \mapsto x$$
Consider $\mathcal{C}=\{]-\infty , a_0[ , \{ a_0 \} , ]a_0 , a_1[, ...  , ]a_{m-1} , a_{m}[ , \{ a_m \} , ]a_m , +\infty [ \} $ a cell decomposition of $\mathbb{R}$ compatible with the collection of sets
\begin{center}
$A_k=\{x\in \mathbb{R} \; : \; \sharp (\pi ^{-1}(x)\cap (\bigcup _j \Gamma _{\xi _j}))=k  \}$ for $k\in \{1,...,n \}$.
\end{center}
For $l\in \{ 0, ... , m \}$, we have $$\pi ^{-1}(a_l) \cap (\bigcup _j \Gamma _{\xi _j})=\{a_{l,0}  , a_{l,1}, ... ,a_{l, k_{l}} \}.$$ 
We denote $a_{-1}:=-\infty$ and $a_{m+1}:=+\infty$. For $l\in \{-1 ,0 , ..., m  \}$, there exist Lipschitz definable  functions
$$\phi _{l,0} < ... < \phi _{l,p_l} : ]a_l , a_{l+1}[\to \mathbb{R}$$  
such that $\pi ^{-1}\left( \pi (\bigcup _i \partial (U_i )) \cap ]a_l , a_{l+1}[ \right) \cap (\bigcup _j \Gamma _{\xi _j}) =\bigcup _p \Gamma _{\phi _{l,p}}$. For each $l\in \{-1,0,...,m \}$ and $p\in \{0,...,p_l \}$, there exist definable Lipschitz functions $\phi _{l, p} ^- < \phi _{l, p} ^+ :]a_l , a_{l+1}[\mapsto \mathbb{R}$, such that we have
$$\phi _{l, 0} ^- < \phi _{l, 0} < \phi _{l, 0} ^+ < \phi _{l, 1} ^- < \phi _{l, 1} < \phi _{l, 1} ^+ \dots <\phi _{l, p_l} ^+  ,$$
$$\lim _{t\rightarrow a_l} \phi _{l, p} ^- (t) = \lim _{t\rightarrow a_l} \phi _{l, p} ^+ (t) = \lim _{t\rightarrow a_l} \phi _{l, p} (t)  \; \text{for} \; l\in \{0, 1, \dots, m\},$$
and 
$$ \lim _{t\rightarrow a_{l+1}} \phi _{l, p} ^- (t) = \lim _{t\rightarrow a_{l+1}} \phi _{l, p} ^+ (t) = \lim _{t\rightarrow a_{l+1}} \phi _{l, p}  (t)  \; \text{for} \; l \in \{-1, 0, . . . , m-1\}.$$
Denote by $a_{l,-1}:=-\infty$ and $a_{l,k_l +1}:=+\infty$. For  each $l\in \{0, ... , m \}$ and $k \in \{-1,...,  k_l   \}$,  there exist Lipschitz functions (with respect to the direction $e = (1,0)$) 
$$\varphi _{l,k} ^-  < a_l <  \varphi _{l,k} ^ + :]\pi _e (a_{l,k}) , \pi _e(a_{l,k+1})[\to \mathbb{R}$$ such that the graphs of these functions do not intersect the graphs of the functions $\phi _{l, p} ^s $ (for any $l$, $p$, and $s\in  \{0,-,+ \}$ with $\phi _{l,p} ^0 := \phi _{l,p}$), and
$$\lim _{t\rightarrow \pi _e (a_{l,k})} \varphi _{l, k} ^s (t)= a_l= \lim _{t\rightarrow \pi _e (a_{l,k+1})} \varphi _{l, k} ^s (t).$$
For each $(l,k)$ such that $a_{l,k}\in U$, there exists $r_{l,k}>0$ such that $B(a_{l,k} , r_{l,k})\subset U_i$ for all $U_i$ that contain $a_{l,k}$.  Choose $r<\min_{l,k} (r_{l,k})$ such that $\partial B(a_{l,k},r)$ is transverse to all the graphs of the functions $\phi _{l,p} ^s$ and  $\varphi _{l,k} ^s$ (here also $\varphi _{l,k} ^0 := a_{l,k}$), with
$$\overline{B}(a_{l',k'},r) \cap \overline{B}(a_{l,k},r)=\emptyset \; \text{if} \; (l,k)\neq (l',k').$$
Consider the collection of open definable sets (following the terminology of Definition 7.2)
$$\mathcal{V} = \{O_{l,p}, \widehat{O}_{l,p}, V_{l,k}, B(a_{l,k},r) \;    \} _{l,k,p},$$
where
$$O_{l,p}= \Gamma (\phi _{l,p}  , \phi _{l,p+1} ), \widehat{O}_{l,p} = \Gamma (\phi _{l,p} ^- ,\phi _{l,p} ^+),\; \text{and} \; V_{l,k}=\Gamma (\varphi _{l,k} ^- ,\varphi _{l,k} ^+) .$$
Clearly, the collection $\mathcal{V}$ is an adapted cover of $\mathcal{U}$.
\end{proof}
\end{prop}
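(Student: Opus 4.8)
The plan is to normalise the configuration with a bi-Lipschitz deformation of the plane, and then to write $\mathcal V$ down explicitly, organised into vertical ``columns'' over a cell decomposition of the base line. First I observe that a definable bi-Lipschitz homeomorphism $h\colon\mathbb R^2\to\mathbb R^2$ preserves $X_{\mathcal A}(\mathbb R^2)$, sends covers to covers, and carries the local models $C_1,\dots,C_6$ to sets of the same type, so it is harmless to replace $\mathcal U$ by $h(\mathcal U)$. Applying Theorem~\ref{Good direction} to $\bigcup_i\partial U_i$ (which has dimension $<2$), I may therefore assume that $\bigcup_i\partial U_i$ lies in a finite union $\bigcup_j\Gamma_{\xi_j}$ of graphs of definable Lipschitz functions $\xi_j$, with respect to the direction $(0,1)$. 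The point of this reduction is that a \emph{global} good direction removes all vertical tangencies simultaneously, which is exactly what will force the pieces of $\mathcal V$ to have only cuspidal --- rather than worse --- singularities.

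Next I set up the column structure. Let $n=\max_x\sharp\bigl(\pi^{-1}(x)\cap\bigcup_j\Gamma_{\xi_j}\bigr)$ and choose a cell decomposition of $\mathbb R$ compatible with the sets $A_k=\{x:\sharp(\pi^{-1}(x)\cap\bigcup_j\Gamma_{\xi_j})=k\}$, $k\le n$; denote its $0$-cells by $a_0<\dots<a_m$, and set $a_{-1}=-\infty$, $a_{m+1}=+\infty$. Over the fibre $x=a_l$ the graphs meet in finitely many points $a_{l,0},\dots,a_{l,k_l}$ --- the only candidate vertices --- while over each open interval $]a_l,a_{l+1}[$ the relevant branches of $\bigcup_j\Gamma_{\xi_j}$ are the graphs of finitely many Lipschitz functions $\phi_{l,0}<\dots<\phi_{l,p_l}$.

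Now I build the blocks. Over each interval $]a_l,a_{l+1}[$ I thicken each branch $\phi_{l,p}$ to a band $\widehat O_{l,p}=\Gamma(\phi_{l,p}^-,\phi_{l,p}^+)$ between nearby Lipschitz functions $\phi_{l,p}^-<\phi_{l,p}<\phi_{l,p}^+$, chosen so that the bands are pairwise disjoint, interleave the $\phi_{l,p}$, and are \emph{pinched} to the common one-sided limit of $\phi_{l,p}$ at both endpoints $a_l,a_{l+1}$; the open regions between consecutive bands (and above/below the extreme ones) are the $O_{l,p}$. Near the fibre $x=a_l$ and away from its vertices, along each subinterval $]a_{l,k},a_{l,k+1}[$ I insert a vertical strip $V_{l,k}=\Gamma(\varphi_{l,k}^-,\varphi_{l,k}^+)$, Lipschitz over the vertical axis, pinched to $a_l$ at the two ends, with graphs disjoint from all the $\phi_{l,p}^\pm$. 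Finally, for each vertex $a_{l,k}\in U$ I take a ball $B(a_{l,k},r)$ with one common radius $r$ small enough that $B(a_{l,k},r)\subset U_i$ whenever $a_{l,k}\in U_i$, that $\partial B(a_{l,k},r)$ is transverse to all graphs in play, and that the closed balls are pairwise disjoint; then $\mathcal V$ is the finite collection of all the $\widehat O_{l,p}$, $O_{l,p}$, $V_{l,k}$ and $B(a_{l,k},r)$.

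It then remains to verify properties (1)--(6). That $\mathcal V$ is a finite definable cover of $\mathbb R^2$, and that each $U_i$ is a finite union of its members (since $\partial U_i\subset\bigcup_j\Gamma_{\xi_j}$, so $U_i$ is cut cleanly by the column structure), are straightforward. The real work is to show that every nonempty finite intersection of members of $\mathcal V$ is connected with only cuspidal singularities, that fourfold intersections are empty, and that the nonempty twofold and threefold intersections are exactly those listed in (5) and (6) with the incidence labels of (4): here one uses the transversality of each $\partial B(a_{l,k},r)$ to the graphs, together with the pinching of the bands and strips, which together force each such intersection to be either Lipschitz or a standard cusp of type $C_3$ and restrict overlaps to adjacent blocks (and to triples that meet a ball). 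I expect this combinatorial--geometric bookkeeping --- enumerating the overlap types of bands, inter-band regions, strips and balls and checking that each is cuspidal and occurs in the prescribed list --- to be the main (though essentially routine) obstacle; the earlier steps are standard o-minimal cell-decomposition arguments, and the whole weight of the proof rests on the pinched, transverse choices made in the previous paragraph.
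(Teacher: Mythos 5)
Your proposal follows essentially the same route as the paper: reduce via Valette's good-direction theorem to the case where $\bigcup_i\partial U_i$ lies in graphs of Lipschitz functions, then build the cover column-by-column from pinched bands $\Gamma(\phi_{l,p}^-,\phi_{l,p}^+)$, inter-band regions, pinched vertical strips, and small transverse balls around the vertices. The construction, the ordering of the steps, and the final (largely unverified in both cases) combinatorial check of properties (1)--(6) all match the paper's argument.
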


Figure 10 illustrates an example of an adapted cover $\mathcal{V}$ following the notation used in the proof of Proposition~\ref{good refinement}.

\begin{figure}[H]
  \centering
  \includegraphics[scale=0.45]{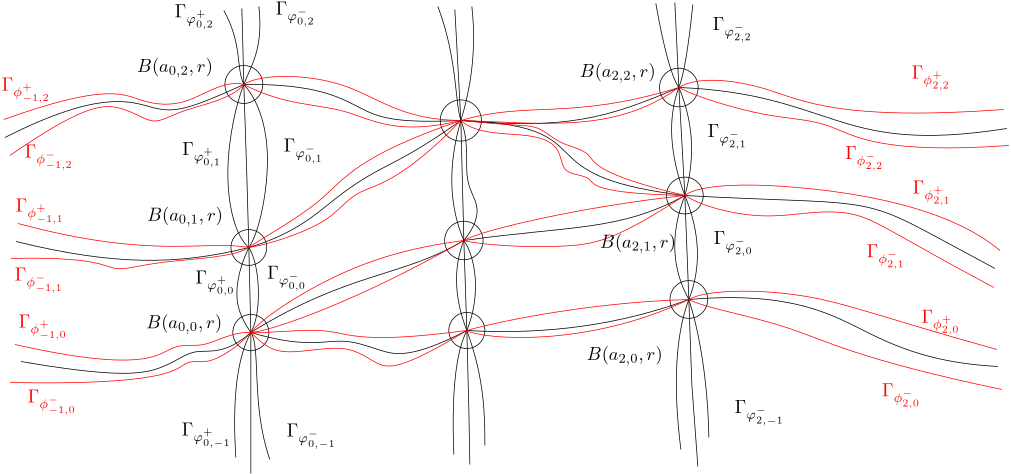}
  \caption{The cover $\mathcal{V}$.}   
\end{figure}

\begin{rmk}\label{description of the good cover}
\begin{itemize}
\item[(1)] \label{example of H1} The sheaf $\mathcal{F}^k : X_{\mathcal{A}}(\mathbb{R}^2) \longrightarrow \mathbb{C}-$vector spaces is not acyclic for $k>1$. In this case, we have an inclusion $W^{k,2} (\mathbb{R}^2) \subset C^0 (\mathbb{R}^2)$. Consider the punctured disk $W=B(0,1) \setminus \{ 0 \} = U \cup V$, with

\begin{center}
$U=\{(x,y)\in W: \; y > x \; or \; y<-x   \}$ and $V=\{(x,y)\in W: \; y> -x \; or \; y< x   \}$.
\end{center}
And $U\cap V= O_1 \cup O_2$ with $\overline{O_1}\cap \overline{O_2}=\{ 0 \}$, where

\begin{center}
$O_1=\{(x,y)\in W: \; y> \mid x \mid \}$ and $O_2 = \{(x,y)\in W: \; y< - \mid x \mid \}$.
\end{center}

If $H^1 (W, \mathcal{F} ^k )=0$, then by the Mayer-Vietoris long exact sequence, the sequence
\begin{center}
$0\mapsto \mathcal{F} ^k (W) \mapsto W^{k,2} (U) \oplus W^{k,2}(V) \mapsto W^{k,2}(O_1) \oplus W^{k,2} (O_2) \mapsto 0$
\end{center}
 is exact. However, this is not possible because for a constant couple $(f= 1, g = 0)\in W^{k,2}(O_1) \oplus W^{k,2} (O_2)$ there are no continuous functions $(u,v)\in W^{k,2} (U) \oplus W^{k,2}(V)$ such that $(u-v)\mid _{O_1} =1$ and  $(u-v)\mid _{O_2} =0$.
Hence $H^1 (W, \mathcal{F} ^k )\neq 0$.

\item[(2)] In Theorem~\ref{Cohomo} we will compute the cohomology of $\mathcal{F}^k$. The proof of Theorem~\ref{Cohomo} is based on the following observations: from the construction of \emph{adapted} covers, we can deduce that $H^j(\cdot,\mathcal{F}^k)=0$ for $j\geqslant 2$ (because the intersection of more than three elements in the adapted cover is always empty). For the first cohomology groups of $\mathcal{F}^k$, the only obstruction for $H^1 (U, \mathcal{F} ^k)$ to  vanish is the existence of punctured disks in $U$. If we take $U$ with no punctured disk singularity, then locally gluing cocycles from $\mathcal{C}_{\mathcal{U}}^1 (U, \mathcal{F}^k)$ to cochains in $\mathcal{C}_{\mathcal{U}}^0 (U, \mathcal{F}^k)$ is summarized in the following simple example: take $x_0\in \overline{U}$ and $\gamma _0 , \cdots , \gamma _4=\gamma _0 :[0,a[\to \overline{U}$ (see Figure 11) with $\gamma _0 (0)= \cdots =\gamma _4 (0)=x_0$ and $\gamma _i ^- < \gamma _i ^+ :[0,a[\to \mathbb{R}^2  $ (see Figure 12). Then locally, we choose two situations (in fact they are the only situations that will show up locally in the proof of Theorem~\ref{Cohomo}): 
\begin{figure}[H]
  \centering
  \includegraphics[scale=0.4]{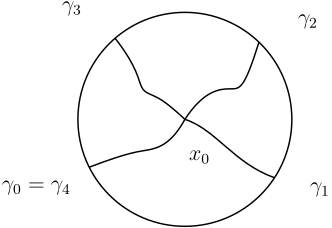}
  \label{casep}
  \caption{The curves $\gamma _i$ around $x_0$.}   
\end{figure}
\begin{figure}[H]
  \centering
  \includegraphics[scale=0.4]{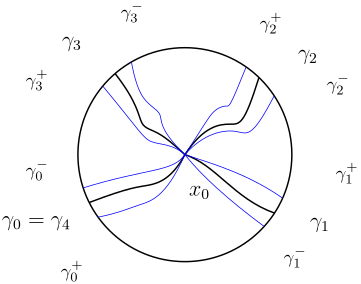}
  \label{casep2}
  \caption{The curves $\gamma _i ^-$ and $\gamma _i ^+$.}   
\end{figure}

\begin{figure}[H]
  \centering
  \includegraphics[scale=0.5]{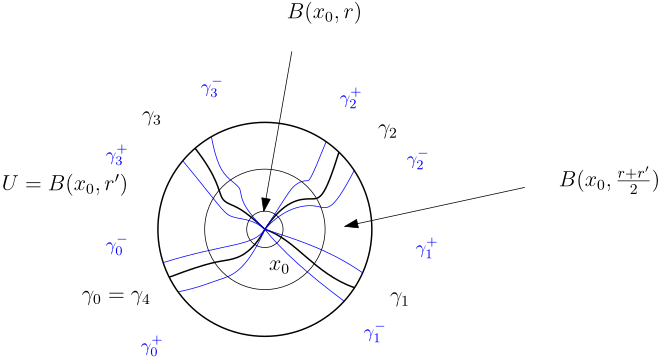}
  \label{casep2}
  \caption{The covering of $U$ in Situation 1.}   
\end{figure}

\begin{itemize}
\item \textbf{Situation 1:} Assume that $x_0 \in U$. In this situation, for some $0<r<r'$, we assume that (see Figure 13)
$$U=\bigcup _i R(r',\gamma _i , \gamma _{i+1}) \bigcup _i R(r',\gamma _i ^- , \gamma _i ^+) \bigcup B(x_0,r).$$ For each $i$, we take functions $f_{i,+}\in W^{k,2}(R(r', \gamma _i, \gamma _i^+))$, $f_{-,i}\in W^{k,2}(R(r', \gamma _i ^- , \gamma _i))$, $g_i \in W^{k,2}(R(r, \gamma _i , \gamma _{i+1}))$, and $h_i \in W^{k,2}(R(r,\gamma _i ^- , \gamma _i ^+ ))$ such that
\begin{center}
$  (f_{i,+}) _{\mid R(r, \gamma _i , \gamma _i^+ )} =  (g_{i}) _{\mid R(r, \gamma _i , \gamma _i^+ )} $,\\
$  (f_{-,i}) _{\mid R(r, \gamma _i ^- , \gamma _i )} = (g_{i-1}) _{\mid R(r, \gamma _i ^- , \gamma _i )}   $,\\
$(h_{i}) _{\mid R(r, \gamma _i , \gamma _i^+ )} =  (g_{i}) _{\mid R(r, \gamma _i , \gamma _i^+ )} $, and \\
$  (h_{i}) _{\mid R(r, \gamma _i ^- , \gamma _i )} =  (g_{i-1}) _{\mid R(r, \gamma _i ^- , \gamma _i )}  $.
\end{center}
       
We want to glue these functions to functions in $W^{k,2} (R(r',\gamma _i , \gamma _{i+1}))$, $W^{k,2} (R(r',\gamma _i ^- , \gamma _{i} ^+))$, and $W^{k,2}(B(x_0,r))$. Take $(\phi ' , \phi  ) $ a partition of unity associated to the covering $\{ C=B(x_0 ,r')\setminus B(x_0 , r) , B(\frac{r+r'}{2},x_0) \}$ (see Figure 13). Define  $u\in W^ {k,2}(B(x_0 , r))$ by taking just the values of $g_i's$ and $h_i's$. On each $R(r',\gamma _i ^- , \gamma _i ^+)$, we choose the zero function. We take smooth compactly supported functions $F_i:\mathbb{R}^2 \to [0,1] $ such that $F_i=1$ on a neighborhood of $R(r',\gamma _i ^- , \gamma _i ^+) \cap C$ and $F_i=0$ on the other sets of type $R(r',\gamma _j ^- , \gamma _j ^+) \cap C$. So, in each $W^{k,2}(R(r',\gamma _i , \gamma _{i+1}))$ we define $v_i$ by 
$$v_i:=\left( \phi ' (F_iE_{R(r',\gamma _i , \gamma _i ^+)}(f_{i,+})+F_{i+1}E_{R(r',\gamma _{i+1}^- , \gamma _{i+1})}(f_{i+1,-})) + \phi (E_{B(x_0 , r)}(u)) \right) _{\mid R(r',\gamma _i , \gamma _{i+1})} .$$  
Then, clearly, the functions $u$, $0$ and $v_i$ glue the functions $f_{i,+}$, $f_{-,i}$, $g_i$ and $h_i$. 

\begin{figure}[H]
  \centering
  \includegraphics[scale=0.4]{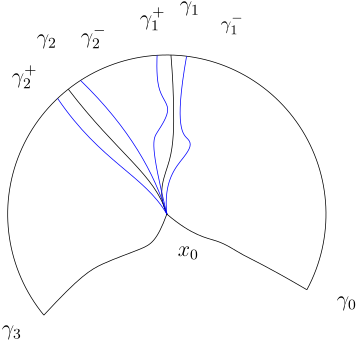}
  \label{casep2}
  \caption{The covering of $U$ in Situation 2.}   
\end{figure}

\item \textbf{Situation 2:} Assume that $x_0 \notin U$. In this situation (with the same notation as in the first case), we assume that (see Figure 14)
$$U=R(r', \gamma _0 , \gamma _1) \bigcup R(r', \gamma _1 ^- , \gamma _1 ^+) \bigcup R(r', \gamma _1 , \gamma _2) \bigcup R(r', \gamma _2 ^-, \gamma _2 ^+) \bigcup R(r', \gamma _2 , \gamma _3),$$
with a given functions $f_{i,+}\in W^{k,2}(R(r', \gamma _i , \gamma _i^+))$ and $f_{-,i}\in W^{k,2}(R(r', \gamma _i ^- , \gamma _i))$. To glue these functions, it is  enough to take the functions  
\begin{center}
$v_0:=0 \in W^{k,2}(R(r', \gamma _0 , \gamma _1)),$\\
 $u_1 :=\left( E_{R(r', \gamma _1 ^- , \gamma _1)}(f_{- ,1} ) \right) _{\mid R(r', \gamma _1 ^- , \gamma _1 ^+ )} \in W^{k,2} (R(r', \gamma _1 ^- , \gamma _1 ^+ )),$ \\
 
 $v_1 := \left( E_{R(r', \gamma _1 ^- , \gamma _1 ^+)} (f_{1,+}) + E_{R(r', \gamma _1 ^- , \gamma _1 ^+ )} (u_1) \right) _{\mid R(r', \gamma _1 , \gamma _2)} \in W^{k,2}(R(r', \gamma _1 , \gamma _2)),$\\
 $u_2:=\left( E_{R(r', \gamma _2 ^- , \gamma _2 )} (f_{-,2}) + E_{R(r', \gamma _1 , \gamma _2)} (v_1)   \right) _{\mid R(r', \gamma _2 ^- , \gamma _2 ^+)} \in W^{k,2} (R(r', \gamma _2 ^- , \gamma _2 ^+)),$\\
 $v_2:=\left( E_{R(r', \gamma _2  , \gamma _2 ^+ )} (f_{2,+}) + E_{R(r', \gamma _2 ^- , \gamma _2 ^+)} (u_2)  \right) _{\mid R(r', \gamma _2 , \gamma _3)} \in W^{k,2} (R(r', \gamma _2 , \gamma _3)).$
\end{center}

\end{itemize}

\end{itemize}
 
\end{rmk}
For the sake of notation, in the following theorem we will use $\mathcal{F}$ instead of $\mathcal{F}^k$ and $W^2$ instead of $W^{k,2}$.
\begin{thm}\label{Cohomo}
Take $U\in X_{\mathcal{A}}(\mathbb{R}^2)$. Then for any $j>1$ we have
\begin{center}
$H^j (U, \mathcal{F})=0$. 
\end{center}

And if $U$ has no singularities of type $C_1$, then for any $j \in \mathbb{N}$
\begin{center}
$H^j (U, \mathcal{F})=\left\{
                        \begin{array}{ll}
                  \mathcal{F}(U) \; \; \; \; \; \;  \; \; \; \; \text{if} \;  j=0 \\
                        \{0\} \; \; \; \; \; \; \; \; \; \; \; \; \; \; \; \text{if} \; j\geqslant 1. \\
                        \end{array}
                        \right.$
\end{center}

\begin{proof}
By the definition of the \v{C}ech cohomology, it is enough to compute the \v{C}ech cohomology on an adapted cover. So take $\mathcal{V}$ an adapted cover of $\{ U \}$ as given by Proposition~\ref{good refinement} and take $\mathcal{W}$ the cover of $U$ defined by
$$\mathcal{W}=\{O\in \mathcal{V} \; : \; O\subset U  \}.$$
Then we have the \v{C}ech complex
$$
\mathcal{C}^{0}_{\mathcal{W}} (U, \mathcal{F}) \overset{d_0}{\to} \mathcal{C}^{1}_{\mathcal{W}} (U, \mathcal{F}) \overset{d_1}{\to} \mathcal{C}^{2}_{\mathcal{W}} (U, \mathcal{F})\to 0.  
$$
For $j > 2$, we have $\mathcal{C}^j_{\mathcal{W}}(U, \mathcal{F}) = 0$, because the intersection of four elements in $\mathcal{W}$ is always empty. Take $\omega \in \mathcal{C}^2_{\mathcal{W}}(U, \mathcal{F})$. Let $\mathcal{W}_2$ be the set of all nonempty intersections of three elements of $\mathcal{W}$, and let $\mathcal{W}_1$ be the set of all nonempty intersections of two elements of $\mathcal{W}$. So we can write $\omega$ as follows
\[
\omega = \sum_{W \in \mathcal{W}_2} \omega(W),
\]
where for $O \in \mathcal{W}_2$ we define
\[
(\omega(W))_O = \left\{
\begin{array}{ll}
(\omega)_W & \text{if } O = W, \\
0 & \text{if } O \neq W.
\end{array}
\right.\]
To show that $\omega = 0$ in $H^2(U, \mathcal{F})$, it is enough to find for each $W \in \mathcal{W}_2$ an element $\alpha(W) \in \mathcal{C}^1_{\mathcal{W}}(U, \mathcal{F})$ such that $d(\alpha(W)) = \omega(W)$. For each $a_{k,l} \in \mathbb{R}^2$, we take a smooth function $F_{k,l} \in C_c^{\infty}(\mathbb{R}^2)$ such that $F_{k,l} = 1$ on $B(a_{k,l}, r)$ and $F_{k,l} = 0$ on each other $B(a_{k',l'}, r)$.  
Take $W \in \mathcal{W}_2$. Then $W = B(a_{l,k}, r) \cap Y$, where $Y$ is one of the cases in (5) of Definition 7.2. For any $O \in \mathcal{W}_1$ we define
\[
(\alpha(W))_O = \left\{
\begin{array}{ll}
\left(\sum_{k,l} F_{k,l} E_W((\omega)_W)\right)_{\mid Y} & \text{if } O = Y, \\
0 & \text{if } O \neq Y.
\end{array}
\right.
\]

So clearly we have $d(\alpha(W)) = \omega(W)$, and so $H^2(U, \mathcal{F}) = 0$.\\
Now assume that $U$ has no punctured disk singularity, and let's show that $H^1(U,\mathcal{F})=0$. Take $\alpha \in \mathcal{C}_{\mathcal{W}}^1(U,\mathcal{F})$ such that $d(\alpha)=0$, so we need to find $u\in \mathcal{C}_{\mathcal{W}}^0(U,\mathcal{F})$ such that $d(u)=\alpha$. For $O\in \mathcal{W}$, we define $u_O \in \mathcal{C}_{\mathcal{W}}^0(U,\mathcal{F})$ by induction on $l$ and $p$ (see (3) of Definition 7.2):
\begin{itemize}

\item \textbf{$O=O_{0,0}$:} In this case we define $u _O=0\in W^{s,2}(O).$

\item \textbf{$O=\widehat{O}_{0,p} $:} Assuming that we have constructed $u _{ O_{0,p} }$, we define $u _O \in W^{s,2}(O)$ by
$$u _O=\left( E_{O_{0,p}}(u _{O_{0,p}})+E_{O_{0,p} \cap \widehat{O}_{0,p}} (\alpha _{O_{0,p} \cap \widehat{O}_{0,p}}) \right) _{\mid O}. $$
\item \textbf{$O= O_{0,p+1}$:} Assuming that we have constructed $u _{\widehat{O}_{0,p}}$ , we define $u_O \in W^{s,2}(O)$ by
 $$u _O=\left( E_{\widehat{O}_{0,p}}(u _{\widehat{O}_{0,p}})+E_{\widehat{O}_{0,p} \cap O_{0,p+1}} (\alpha _{\widehat{O}_{0,p} \cap O_{0,p+1}}) \right) _{\mid O}. $$
 This was induction on $p$ with fixed $l=0$. Now assume that for $l$ fixed we have constructed $u _{O_{l,p}} $ and $u _{\widehat{O}_{l,p}}$ for each $p$. If $O=V_{l,k} \in \mathcal{W}$, then by (4) of Definition 7.2 there is a unique $p$ such that 
$$O_{l,p} \cap 	V_{l,k}\neq \emptyset.$$
In this case we define $u_O$ by 
$$u _O = \left( E_{O_{l,p}} (u _{O_{l,p}})+ E_{O_{l,p} \cap 	V_{l,k}} (\alpha _{O_{l,p} \cap 	V_{l,k}}) \right) _{\mid O} .$$
To finish, we need to construct $u$ on each $O=O_{l+1,p}$ and $O=\widehat{O}_{l+1,p}$ for each $p$. We discuss the following cases:

\item \textbf{$O=O_{l+1,0}$:}
  Assume that there is a unique $k$ such that $O\cap V_{l,k} \neq \emptyset$ (if not we define $u _O$ to be $0$), so we define $u _O$ by
 $$u _O=\left( E_{V_{l,k}}(u _{V_{l,k}})+E_{O\cap V_{l,k}} (\alpha _{O\cap V_{l,k}}) \right) _{\mid O}. $$ 

\item \textbf{$O=\widehat{O}_{l+1,p}$:} Assume that we have constructed $u_{O_{l+1,p}}$. We define $u _O$ by
$$u _O:=\left( E_{O_{l+1,p}} (u_{O_{l+1,p}}) +E_{\widehat{O}_{l+1,p} \cap O_{l+1,p}} (\alpha _{\widehat{O}_{l+1,p} \cap O_{l+1,p}})  \right) _{\mid O}.$$ 

\item \textbf{$O=O_{l+1,p+1}$:} We break it into two cases:
\begin{itemize}
\item \textbf{Case(1):} For any $k$ we have $V_{l+1,k} \cap O=\emptyset$. We define $u_O$ by
$$u _O := \left( E_{\widehat{O}_{l+1,p}} (u_{\widehat{O}_{l+1,p}}) + E_{\widehat{O}_{l+1,p} \cap O_{l+1,p+1}} (\alpha _{\widehat{O}_{l+1,p} \cap O_{l+1,p+1}}) \right) _{\mid O}.$$ 
\item \textbf{Case(2):} There exists $k$ such that
$$V_{l+1,k} \cap O \neq \emptyset .$$ 
In this case, $B(a_{l+1,k},r)\in \mathcal{W}$  (because otherwise $a_{l+1,k}$ will be a punctured disk singularity for $U$), and we choose $u_{B(a_{l+1,k},r)}$ to take the values of $\alpha$. Take $r'>r$ such that $B(a_{l+1,k},r')\cap B(a_{l+1,k+1},r')=\emptyset$ and $(f,g)$ a partition of unity associated to the cover $(B(a_{l+1,k},r'), \mathbb{R}^2 \setminus  B(a_{l+1,k},r)) $. We also take $h$ and $h' \in C^\infty (\mathbb{R}^2)$ such that 
\begin{center}
$h _{\mid V_{l+1,k} \cap \mathbb{R}^2 \setminus  B(a_{l+1,k},r))}=0$, $h _{\mid \widehat{O}_{l+1,p} \cap \mathbb{R}^2 \setminus  B(a_{l+1,k},r))}=1,$\\
$h' _{\mid V_{l+1,k} \cap \mathbb{R}^2 \setminus  B(a_{l+1,k},r))}=1$, and $h' _{\mid \widehat{O}_{l+1,p} \cap \mathbb{R}^2 \setminus  B(a_{l+1,k},r))}=0$.
\end{center}
So in this case we define $u_O$ by
\begin{center}
$u_O:=h \left( f E_{B(a_{l+1,k},r)}(u_{B(a_{l+1,k},r)}) + g E_{\widehat{O}_{l+1,p}} (u_{\widehat{O}_{l+1,p}})  \right)_{\mid O} +$\\ $h'\left( f E_{B(a_{l+1,k},r)}(u_{B(a_{l+1,k},r)}) + g E_{V_{l+1,k}} (u_{V_{l+1,k}})  \right)_{\mid O} .$
\end{center}
And, for any $O$ such that $a_{l+1,k}\in \overline{O}$, we need to modify the definition of $u_O$ by
$$u_O:= \left(f E_{B(a_{l+1,k},r)}(u_{B(a_{l+1,k},r)})+g E_{O}(u'_O)  \right)_{\mid O}, $$
where $u' _O$  is the old definition given in the previous stages of the induction.
\end{itemize}

\end{itemize}

Finally, from the construction of $u$, we have $d(u)=\alpha$.

\end{proof}
\end{thm}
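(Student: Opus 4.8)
\medskip
\noindent\textbf{Proof proposal.} The plan is to reduce everything to \v{C}ech cohomology computed on an \emph{adapted} cover, whose existence for any cover of $U$ is Proposition~\ref{good refinement}. So I would fix $U\in X_{\mathcal{A}}(\mathbb{R}^2)$, take an adapted cover $\mathcal{V}$ of $\{U\}$, and work with $\mathcal{W}=\{O\in\mathcal{V}:O\subset U\}$ and the complex $\mathcal{C}^{\bullet}_{\mathcal{W}}(U,\mathcal{F})$. Since in an adapted cover no four members meet, this complex is concentrated in degrees $0,1,2$, so $H^j(U,\mathcal{F})=0$ for $j>2$ is immediate; the remaining work is to kill $H^2$ in general and $H^1$ when $U$ has no $C_1$ singularity. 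Two facts from Section~6 are the essential inputs: that $\mathcal{F}^k$ is a sheaf, and that (Remark~\ref{remark2.8}) every finite intersection of members of an adapted cover has only cuspidal singularities, so on such a set $\mathcal{F}^k$ coincides with $W^{k,2}$ and carries a continuous linear extension operator $E_{(\cdot)}$ to $W^{k,2}(\mathbb{R}^2)$.

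For $H^2=0$ I would use the structural fact that every triple intersection allowed in an adapted cover (item~(6) of the definition) involves exactly one of the balls $B(a_{l,k},r)$. Given a $2$-cocycle $\omega$, decompose it as $\omega=\sum_W \omega(W)$, one summand per triple intersection $W$, and for each $W$, attached to a ball $B(a_{l,k},r)$, build a $1$-cochain $\alpha(W)$ by multiplying $E_W((\omega)_W)$ by a bump function $F_{l,k}$ that is $1$ on $B(a_{l,k},r)$ and $0$ on every other such ball, placing the result on the appropriate $2$-fold overlap and $0$ elsewhere. A direct computation gives $d(\alpha(W))=\omega(W)$, hence $\omega=d\big(\sum_W\alpha(W)\big)$ and $H^2(U,\mathcal{F})=0$ with no hypothesis on $U$.

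For $H^1=0$ under the no-$C_1$ hypothesis: given a $1$-cocycle $\alpha$, I would build a $0$-cochain $u$ with $du=\alpha$ by induction along the combinatorial structure of the adapted cover --- over the ``columns'' indexed by $l$, and within each column over the index $p$ (items~(3)--(5) of the definition). Start from $u_{O_{0,0}}=0$ and propagate: on each successive member $O$ define $u_O$ as the restriction of $E_{O'}(u_{O'})$ plus the extension of the relevant component of $\alpha$ on the overlap $O\cap O'$, where $O'$ is a previously-treated neighbour; this is forced by $du=\alpha$ and is well-defined because the overlaps have only cuspidal singularities. The subtle step --- and the one I expect to be the main obstacle --- is crossing a ``bad point'' $a_{l,k}$, where two branches of the construction must be reconciled: here one uses that $B(a_{l,k},r)\in\mathcal{W}$, which holds \emph{precisely because $U$ has no punctured-disk singularity} (otherwise $B(a_{l,k},r)\not\subset U$), sets $u_{B(a_{l,k},r)}$ to carry the values of $\alpha$, and then re-glues the already-defined $u_O$ on all $O$ with $a_{l,k}\in\overline{O}$ via a partition of unity subordinate to $\big(B(a_{l,k},r'),\ \mathbb{R}^2\setminus B(a_{l,k},r)\big)$ for a slightly larger $r'$. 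After the re-gluing one checks $du=\alpha$ on each nonempty overlap, which is the routine (if bookkeeping-heavy) part. The local model for this entire inductive step is the two situations drawn in Remark~\ref{description of the good cover}.

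Finally $H^0(U,\mathcal{F})=\mathcal{F}(U)$ is just the sheaf axiom, and the no-$C_1$ hypothesis cannot be dropped: Remark~\ref{description of the good cover}(1) already exhibits $H^1\neq 0$ for a punctured disk. The heart of the argument is therefore the $H^1$ induction; once the combinatorics of adapted covers (Proposition~\ref{good refinement}) and the extension operators on cuspidal domains (Remark~\ref{remark2.8}) are in hand, the vanishing of $H^2$ and of all higher cohomology is comparatively soft.
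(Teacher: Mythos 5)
Your proposal is correct and follows essentially the same route as the paper: reduction to the \v{C}ech complex of an adapted cover (which is concentrated in degrees $0,1,2$), killing $H^2$ by splitting a $2$-cocycle into summands supported on single triple overlaps and using bump functions together with the extension operators on cuspidal domains, and killing $H^1$ by the same column-by-column induction in which the balls $B(a_{l,k},r)$ --- available precisely because $U$ has no punctured-disk singularity --- are used to reconcile the branches via a partition of unity. The paper's proof is exactly this argument carried out with explicit formulas for $\alpha(W)$ and for $u_O$ at each inductive stage.
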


\section{ $(W^{1,2} , W^{0,2})$-double extension is a sufficient condition for the sheafification of $W^{s,2}$.} 

\vspace{0.3cm}

In this section, we provide a categorical proof of Lemma~\ref{Proposition2.2}, and we discuss the case where $U\cap V$ is not Lipschitz. The only assumption we require here is that $U$, $V$, and $U\cup V$ are Lipschitz. We use the fact that the sequences 

\begin{center}
$0\to W^{0,2} (U\cup V)\to W^{0,2} (U)\oplus W^{0,2} (V) \to W^{0,2} (U\cap V) \to 0$
\end{center}
and
\begin{center}
$0\to W^{1,2} (U\cup V)\to W^{1,2} (U)\oplus W^{1,2} (V) \to W^{1,2} (U\cap V) \to 0$
\end{center}
are exact (see Lemma~\ref{Proposition2.2} and Remark~\ref{rem_s_in_N}).\\

We assume that we have the following double extension:

\textbf{Assumption:} There exists a linear continuous extension operator
\begin{center}
 $\mathcal{T}:W^{0,2}(U\cap V)\longrightarrow W^{0,2} (\mathbb{R}^n)$,
  \end{center}
such that $\mathcal{T}$ induces a linear continuous extension from $W^{1,2}(U\cap V)$ to $W^{1,2}(\mathbb{R}^n)$. \\

\begin{rmk}
Note that this assumption holds if $U\cap V$ is Lipschitz, due to the Stein extension theorem.
\end{rmk}

\vspace{0.3cm}

Note that here $W^{0,2}=L^2$, and we need only Sobolev spaces with regularity $s\in ]0,1[$. This is because, as recalled from Section 3, if $k\in \mathbb{N}$ and $s\in ]k,k+1[$, then $s-k \in ]0,1[$ and for $U\subset \R ^n$:
$$W^{s,2}_\star(U) = \{ f\in L^2(U) \; : \; \partial^\alpha f \in W^{s-k,2}_\star(U) \text{ for all } |\alpha| \leq k \}.$$
We will pass to our exact sequence for $s\in ]0,1[$ by a linear combination of the last two, which leads us to expect it to be exact. To achieve this, we will use the notion of \textbf{exact category} (see \cite{E}). An exact category is not abelian but has a structure that enables us to perform homological algebra.\\

Let $\mathcal{C}$ be an additive category. A pair of composable morphisms
\begin{center}
\begin{tikzcd}
X \arrow[r, "f"] & Y \arrow[r, "g"] & Z
\end{tikzcd}
\end{center}
is said to be a \textbf{KC-pair} (Kernel-Cokernel pair) if $f$ is the kernel of $g$ and $g$ is the Cokernel of $f$. Fix $\mathcal{E}$ as a class of KC-pairs. An \textbf{admissible monomorphism} (with respect to $\mathcal{E}$) is a morphism $f$ such that there is a morphism $g$ with $(f,g) \in \mathcal{E}$. \textbf{Admissible epimorphisms} are defined dually.\\

\begin{defi}
An \textbf{exact structure} is a pair $(\mathcal{C}, \mathcal{E})$ where $\mathcal{C}$ is an additive category and $\mathcal{E}$ is a class of KC-pairs, closed under isomorphisms, and satisfying the following proprieties:
\begin{description}
\item[$(E_0)$] For any $X\in Obj(\mathcal{C})$, $Id_X$ is an admissible monomorphism.
\item[$(E_0)^c$] The dual statement of $(E_0)$.
\item[$(E_1)$] The composition of admissible monomorphisms is an admissible monomorphism.
\item[$(E_1)^c$] The dual statement of $(E_1)$.
\item[$(E_2)$] If $f:X\to Y$ is an admissible monomorphism and $t:X\to T$ a morphism, then the pushout 
\begin{center}
\begin{tikzcd}
X \arrow[r, "f"] \arrow[d, "t"'] & Y \arrow[d, "s_Y"] \\
T \arrow[r, "s_T"']              & S                 
\end{tikzcd}
\end{center}
exists and $s_T$ is an admissible monomorphism.
\item[$(E_2)^c$] The dual statement of $(E_2)$.
\end{description}
\end{defi}
If $(\mathcal{C}, \mathcal{E})$ is an exact structure, a morphism $f:X\longrightarrow Y$ is said to be \textbf{$\mathcal{E}$-strict} if it can be decomposed into
\begin{center}
\begin{tikzcd}
X \arrow[rr, "f"] \arrow[rd, "e"'] & {} \arrow[loop, distance=2em, in=305, out=235] & Y \\
                                   & Z \arrow[ru, "m"']                             &  
\end{tikzcd}
\end{center}
 where $e:X\longrightarrow Z$ is an admissible epimorphism (with respect to $\mathcal{E}$), and  $m:Z\longrightarrow Y$ is an admissible monomorphism (with respect to $\mathcal{E}$).\\

Now fix $\mathcal{C}$ an additive category. It is well known (see \cite{E}) that the following class of KC-pairs
\begin{center}
$\mathcal{E}_{0} =\{ (f,g) \; : \;   \begin{tikzcd}
X \arrow[r, "f"] & Y \arrow[r, "g"] & Z
\end{tikzcd} \; \text{split}    \}$
\end{center}

is an exact structure on $\mathcal{C}$ (it is the smallest one on $\mathcal{C}$).\\

\begin{defi} Let $(\mathcal{C}, \mathcal{E})$ be an exact structure, $\mathcal{A}$ an \textbf{abelian} category, and $F:\mathcal{C}\longrightarrow \mathcal{A}$ an \textbf{additive functor}. $F$ is said to be \textbf{injective} if for any pair $\begin{tikzcd}
X \arrow[r, "f"] & Y \arrow[r, "g"] & Z
\end{tikzcd}$ in $\mathcal{E}$, the sequence 
\begin{center}
  \begin{tikzcd}
0 \arrow[r] & F(X) \arrow[r, "F(f)"] & F(Y) \arrow[r, "F(g)"] & F(Z)
\end{tikzcd}
  \end{center}  
is exact in $\mathcal{A}$.
\end{defi}
The following result is well known in the theory of exact categories:

\begin{prop}\label{Inj-functors}
$F$ is injective if and only if it preserves the kernel of every $\mathcal{E}$-strict morphism.
\begin{proof}
See \cite{E}.
\end{proof}
\end{prop}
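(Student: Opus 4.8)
The plan is to prove both implications directly from the definitions, using the fact that an $\mathcal{E}$-strict morphism factors through an admissible epimorphism followed by an admissible monomorphism, and that the members of $\mathcal{E}$ are precisely the kernel--cokernel pairs built from these.

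First, suppose $F$ is injective and let $f\colon X\to Y$ be an $\mathcal{E}$-strict morphism, with factorisation $f = m\circ e$ where $e\colon X\to Z$ is an admissible epimorphism and $m\colon Z\to Y$ an admissible monomorphism. By definition of admissible epimorphism there is an admissible monomorphism $i\colon K\to X$ with $(i,e)\in\mathcal{E}$, so $i$ is a kernel of $e$; by injectivity the sequence $0\to F(K)\xrightarrow{F(i)} F(X)\xrightarrow{F(e)} F(Z)$ is exact in $\mathcal{A}$, i.e. $F(i)$ is a kernel of $F(e)$. Since $m$ is an admissible monomorphism it is in particular a monomorphism, and $F(m)$ is a monomorphism as well (applying injectivity to the pair containing $m$: $0\to F(Z)\xrightarrow{F(m)}F(Y)\to\cdots$ is exact). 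Hence $\Ker F(f)=\Ker(F(m)\circ F(e))=\Ker F(e)$, which is $F(i)\colon F(K)\to F(X)$; but $F(i)$ is also $F$ applied to the kernel $i$ of $f$ (as $i$ is a kernel of $e$ and $m$ is mono, $i$ is a kernel of $f=m\circ e$). Thus $F$ preserves the kernel of $f$.

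Conversely, suppose $F$ preserves the kernel of every $\mathcal{E}$-strict morphism, and take a pair $X\xrightarrow{f}Y\xrightarrow{g}Z$ in $\mathcal{E}$. Both $f$ and $g$ are $\mathcal{E}$-strict: $g$ is an admissible epimorphism (take the trivial factorisation $g=g\circ\id$, with $\id$ an admissible monomorphism by $(E_0)$), and $f$ is an admissible monomorphism, hence $\mathcal{E}$-strict by the dual remark. Since $f$ is the kernel of $g$ and $F$ preserves this kernel, $F(f)$ is a kernel of $F(g)$; in particular $F(f)$ is a monomorphism and $\Image F(f)=\Ker F(g)$, so the sequence $0\to F(X)\xrightarrow{F(f)}F(Y)\xrightarrow{F(g)}F(Z)$ is exact in $\mathcal{A}$. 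As the chosen pair in $\mathcal{E}$ was arbitrary, $F$ is injective.

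The main subtlety I expect is bookkeeping rather than conceptual: one must be careful that ``kernel of $f$'' and ``kernel of $e$'' genuinely coincide when $f=m\circ e$ with $m$ a (split, or at least admissible) monomorphism, and that applying the already-known half of the statement to the monomorphism $m$ is legitimate — this is where one uses that admissible monomorphisms, being the first term of a pair in $\mathcal{E}$, are sent by an injective $F$ to monomorphisms. Since the paper cites \cite{E} for this proposition, the cleanest route is simply to defer to that reference; the sketch above records why the equivalence holds so the reader can reconstruct it.
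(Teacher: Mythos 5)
Your argument is correct, and since the paper's own ``proof'' is only a citation to \cite{E}, there is no in-paper argument to compare it with; your direct verification from the definitions is exactly the standard one and fills that gap. Both directions are sound: for the forward implication you correctly use that an admissible monomorphism $m$ is sent to a monomorphism by an injective $F$ (being the first term of a pair in $\mathcal{E}$), so that $\Ker F(m\circ e)=\Ker F(e)$ and the kernel of the admissible epimorphism $e$ — which is preserved by injectivity applied to its KC-pair — is also the kernel of $f=m\circ e$; for the converse, observing that the second map $g$ of any pair $(f,g)\in\mathcal{E}$ is itself $\mathcal{E}$-strict reduces injectivity to kernel preservation. One small slip: the trivial factorisation exhibiting $g$ as $\mathcal{E}$-strict should be $g=\id_Z\circ g$ (admissible epimorphism $g$ followed by the admissible monomorphism $\id_Z$ from $(E_0)$), not $g=g\circ\id$, which would put the identity in the epimorphism slot and $g$ in the monomorphism slot; this does not affect the argument.
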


We will construct a category $\mathcal{C}$ to serve our case, and the category $\mathcal{A}$ will be just the category of $\mathbb{C}$-vector spaces. Let's recall the concept of interpolation:

\begin{defi} A good pair of Banach spaces (or \textbf{GB-pair}) is a pair $(X,Y)$ of Banach spaces such that $X\subset Y$ with continuous inclusion, that is, there is $C>0$ such that for any $x\in X$ we have
$$\| x \| _Y \leqslant C \| x \| _X.$$
\end{defi}

We recall the interpolation $K$-method. So fix $(X,Y)$ a GB-pair  and $t>0$, and define the $K$-norm on $Y$ by 
$$u\mapsto K(t,u)=\inf\{\| x \| _{X}+
t\| y \| _{Y} \; : \; u=x+y, \; x\in X, \; y\in Y     \}.$$
For $s\in ]0,1[$, we define the interpolation space $[X,Y]_{s}$ by 
$$[X,Y]_{s}=\{u\in Y \; : \; \int _{0} ^{+\infty}\left( t^{-s}K(t,u) \right) ^{2}\frac{dt}{t}<+\infty   \}.$$
It is a Banach space with the norm 
$$\| u \| _{[X,Y]_{s}}=\left( \int _{0} ^{+\infty}\left( t^{-s}K(t,u) \right) ^{2}\frac{dt}{t}\right) ^{\frac{1}{2}}.$$
Recall the following theorem on interpolation spaces:
\begin{thm}\label{Interpolation}
Let $(X,Y)$ and $(X',Y')$ be two GB-pairs and 
\begin{center}
$L:Y\longrightarrow Y'$
\end{center}
a continuous linear map such that $L$ induces a continuous linear map from $X$ to $X'$. Then, for any $s\in ]0,1[$, $L$ induces a linear continuous map from $[X,Y]_s$ to $[X',Y']_s$.
 \begin{proof}
 See \cite{J}.
\end{proof}    

\end{thm}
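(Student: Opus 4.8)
The plan is to prove the statement directly and self‑containedly from the definition of the $K$‑functional, rather than leaning on the reference \cite{J}; the whole argument collapses to a single pointwise comparison of the two $K$‑functionals. Write $C_Y$ for the operator norm of $L\colon Y\to Y'$, write $C_X$ for the operator norm of the induced map $L\colon X\to X'$, and set $C=\max(C_X,C_Y)$.

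First I would fix $u\in Y$ and $t>0$ and take any decomposition $u=x+y$ with $x\in X$ and $y\in Y$. Applying $L$ produces the decomposition $Lu=Lx+Ly$, where $Lx\in X'$ and $Ly\in Y'$, so that
\[
K(t,Lu;X',Y')\ \le\ \|Lx\|_{X'}+t\,\|Ly\|_{Y'}\ \le\ C_X\|x\|_X+t\,C_Y\|y\|_Y\ \le\ C\bigl(\|x\|_X+t\,\|y\|_Y\bigr).
\]
Taking the infimum over all decompositions $u=x+y$ gives the key estimate $K(t,Lu;X',Y')\le C\,K(t,u;X,Y)$, valid for every $t>0$.

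Then I would integrate this inequality against the weight $t^{-2s}\frac{dt}{t}$. For $s\in\,]0,1[$ this yields
\[
\|Lu\|_{[X',Y']_s}^2=\int_0^{+\infty}\bigl(t^{-s}K(t,Lu;X',Y')\bigr)^2\frac{dt}{t}\ \le\ C^2\int_0^{+\infty}\bigl(t^{-s}K(t,u;X,Y)\bigr)^2\frac{dt}{t}=C^2\|u\|_{[X,Y]_s}^2 .
\]
In particular $u\in[X,Y]_s$ forces $Lu\in[X',Y']_s$, and since the induced map is just the restriction of the already‑linear $L$, it is linear and continuous with norm at most $C$, which is exactly the assertion.

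There is no genuine obstacle here; the only point needing care is the middle inequality, where the two a priori different operator norms $C_X$ and $C_Y$ must be absorbed into the single constant $C$ uniformly in $t$, so that the upper bound for $K(t,Lu;X',Y')$ is proportional to $\|x\|_X+t\|y\|_Y$ rather than to some $t$‑dependent combination. Once that is observed, the rest is a monotone comparison of two weighted $L^2$ integrals. I would also note that the identical one‑line computation works verbatim for the real interpolation norms $K_{s,q}$ with any $q\in[1,\infty]$, so this theorem is nothing more than the functoriality of the real $K$‑method; it is isolated here precisely because it is what makes $(X,Y)\mapsto[X,Y]_s$ an additive functor on the category of GB‑pairs, which is the ingredient the subsequent exact‑category argument relies on.
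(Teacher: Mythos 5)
Your proof is correct and is precisely the standard argument for the functoriality of the real $K$-method: the pointwise estimate $K(t,Lu;X',Y')\le \max(C_X,C_Y)\,K(t,u;X,Y)$ followed by integration against $t^{-2s}\,dt/t$. The paper itself gives no proof and simply defers to Lions--Magenes, where this same computation appears, so your self-contained version matches the intended argument.
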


Let $\mathcal{A}$ be the category of $\mathbb{C}$-vector spaces and  $\mathcal{C}$ be the category where the object are $GB$-pairs. For  $((X,Y),(X',Y'))\in (Obj(\mathcal{C}))^2$, we define the morphisms as: 
\begin{center}
$Hom _{\mathcal{C}}((X,Y),(X',Y'))=\{ L\in \mathcal{L}(Y,Y') \; : \; L\big|_X
 \in \mathcal{L}(X,X')  \; \}$.
\end{center}

Clearly, $\mathcal{C}$ is an additive category. We consider the exact structure $\mathcal{E}_0$ on $\mathcal{C}$ of splitting KC-pairs. For any $s\in ]0,1[$, we define the functor $F_{s}: \mathcal{C}\longrightarrow \mathcal{A}$ as follows
\begin{center}
 $F_{s} ((X,Y))=[X,Y]_s$ and for $f\in Hom _{\mathcal{C}}((X,Y),(X',Y'))$\\
 $F_s (f)=f\mid _{[X,Y]_s}$.
 \end{center} 
By Theorem~\ref{Interpolation} , $F_s$ is well defined additive functor. 
\begin{lem}\label{some} For $(X,Y),(X',Y')\in Obj(\mathcal{C})$ and for $s\in [0,1]$, there is a natural isomorphism 

\begin{center}
$[X\oplus X' , Y\oplus Y']_s \simeq[X,Y]_s \oplus [X',Y']_s$.
\end{center}
\begin{proof}
Take the projections 
\begin{center}
$P:Y\oplus Y' \longrightarrow Y$, and 
\end{center}

\begin{center}
$P':Y\oplus Y' \longrightarrow Y'$.
\end{center}

Since $P\mid _{X\oplus X'}\in \mathcal{L}(X\oplus X' ,X)$ and $P'\mid _{X\oplus X'}\in \mathcal{L}(X\oplus X', X')$, by Theorem~\ref{Interpolation} this induces a continuous linear map
\begin{center}
  $(P,P'):[X\oplus X' , Y\oplus Y']_s \longrightarrow[X,Y]_s \oplus [X',Y']_s$,  \\
  $\; \; \; \; \; \; (u)\mapsto (P(u),P'(u))$.
  \end{center}  
The same way applying Theorem~\ref{Interpolation} on the injections
\begin{center}
$I:Y\longrightarrow Y\oplus Y'$ and $I':Y'\longrightarrow Y\oplus Y'$,
\end{center}
we get a continuous linear map 
\begin{center}
$(I,I'):[X,Y]_s \oplus [X',Y']_s \longrightarrow [X\oplus X' , Y\oplus Y']_s$,  \\
$\; \; \; \; (z,z')\mapsto z\oplus z'$.
\end{center}

It is clear that $(I,I') \circ (P,P')=Id$ and $(P,P') \circ (I,I')=Id $.
\end{proof}
\end{lem}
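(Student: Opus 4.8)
The plan is to deduce the isomorphism purely from the functoriality of the $K$-method, i.e.\ from the fact that $F_s$ is an additive functor on $\mathcal{C}$ (Theorem~\ref{Interpolation}), by writing down the evident comparison maps and checking that they are mutually inverse. For $s\in\{0,1\}$ the statement is immediate with the usual endpoint conventions, so I would focus on $s\in\,]0,1[$, where Theorem~\ref{Interpolation} applies.

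First I would record that $(X\oplus X',\,Y\oplus Y')$ is a legitimate object of $\mathcal{C}$, the inclusion $X\oplus X'\hookrightarrow Y\oplus Y'$ being continuous since each factor inclusion is. Then I would observe that the four coordinate maps
\[
P:Y\oplus Y'\to Y,\qquad P':Y\oplus Y'\to Y',\qquad I:Y\to Y\oplus Y',\qquad I':Y'\to Y\oplus Y'
\]
are morphisms of $\mathcal{C}$, because each restricts to a bounded operator between the corresponding $X$-components (they are just the coordinate projections and injections). Applying $F_s$ to $P,P'$ and pairing the outputs yields a continuous linear map $(F_s(P),F_s(P')):[X\oplus X',Y\oplus Y']_s\to[X,Y]_s\oplus[X',Y']_s$, and applying $F_s$ to $I,I'$ and adding yields a continuous linear map the other way.

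The crux is that these two maps are mutually inverse, and this requires no computation: the identities $(I,I')\circ(P,P')=\mathrm{id}_{Y\oplus Y'}$ and $(P,P')\circ(I,I')=\mathrm{id}$ already hold as identities of linear maps among the ambient Banach spaces $Y$, $Y'$, $Y\oplus Y'$; since $F_s$ acts by restriction and each interpolation space sits inside the relevant ambient space, these identities persist after restriction, giving a topological isomorphism $[X\oplus X',Y\oplus Y']_s\cong[X,Y]_s\oplus[X',Y']_s$. Naturality in $\mathcal{C}$ is then automatic — for morphisms $f,f'$ the relevant square commutes because $f\oplus f'$ intertwines the coordinate maps at the level of ambient spaces, or equivalently because $F_s$ is additive on hom-sets and any additive functor between additive categories preserves finite biproducts. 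There is no real obstacle here; the only point worth flagging is that the isomorphism is topological but in general not isometric, which is harmless for its use in Section~8. If one preferred to avoid the categorical language, the hands-on alternative is to check directly the two-sided estimate
\[
\max\{K(t,u),K(t,u')\}\ \le\ K\big(t,(u,u')\big)\ \le\ K(t,u)+K(t,u')
\]
for the $K$-functionals of the three pairs, by splitting and recombining decompositions coordinatewise, and then to substitute it into the integral defining the interpolation norm.
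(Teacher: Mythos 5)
Your proposal is correct and follows essentially the same route as the paper: apply Theorem~\ref{Interpolation} to the coordinate projections and injections to get continuous comparison maps in both directions, and observe that they compose to the identity because interpolation acts by restriction. The extra remarks (endpoint cases $s\in\{0,1\}$, naturality, the alternative two-sided $K$-functional estimate) are sensible additions but do not change the argument.
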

  
\begin{lem}\label{Injective} The functor $F_{s}:\mathcal{C}\longrightarrow \mathcal{A}$ is injective with respect to the exact structure $(\mathcal{C} , \mathcal{E} _0)$.
\begin{proof}
By Proposition~\ref{Inj-functors}, it is enough to prove that $F_{s}$ preserves the kernel of every $\mathcal{E}_0$-strict morphism. Take $f:(X,Y)\longrightarrow (X',Y')$ a $\mathcal{E}_0$-strict morphism. Then there exist an admissible epimorphism $e:(X,Y)\longrightarrow (Z,W)$ and an admissible monomorphism $m:(Z,W)\longrightarrow (X',Y')$ such that we have a decomposition
\begin{center}
\begin{tikzcd}
(X,Y) \arrow[rr, "f"] \arrow[rd, "e"'] & {} \arrow[loop, distance=2em, in=305, out=235] & (X',Y') \\
                                   & (Z,W) \arrow[ru, "m"']                             &  
\end{tikzcd}
\end{center}
 By Remark 3.28 in \cite{E}, if $k_f:K_f\longrightarrow (X,Y)$ is the kernel of $f$, then $(k_f,e)\in \mathcal{E}_0$. An easy computation shows that the kernel of $f$ is the  morphism  

\begin{center}
$k_f:K_f=(X\cap Ker(f), Ker(f))\longrightarrow (X,Y)$.\\
$\; \; \; u \longrightarrow k_f(u)=u$.
\end{center}
Here $Ker(f)$ is given the norm of $Y$, and $X\cap Ker(f)$ is given the norm
$$\| u \| _{X\cap Ker(f)}=\max\{\| u \| _{X}, \| u \| _{Ker(f)} \}.$$
By Lemma 3.8 in \cite{E}, there exists a morphism $P:(X,Y)\longrightarrow K_f$ such that $P\circ k_f = Id_{K_f}$, and this means that $(X\cap Ker(f), Ker(f))$ is a complemented sub-couple of $(X,Y)$. Hence, by \cite[Section1.17.1, Theorem 1]{IN}, we have 
\begin{center}
 $[X\cap Ker(f), Ker(f)]_s=Ker(f)\cap [X,Y]_s=Ker(F_{s}(f))$.
 \end{center} 
\end{proof}

\end{lem}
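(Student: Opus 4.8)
The plan is to reduce the statement to a single interpolation fact via the abstract characterization of injective functors. By Proposition~\ref{Inj-functors}, $F_s$ is injective for $(\mathcal{C},\mathcal{E}_0)$ if and only if it preserves the kernel of every $\mathcal{E}_0$-strict morphism, so I would fix an $\mathcal{E}_0$-strict morphism $f:(X,Y)\longrightarrow(X',Y')$ and use a decomposition $f=m\circ e$ with $e$ an admissible epimorphism and $m$ an admissible monomorphism (with respect to $\mathcal{E}_0$). Since $m$ is in particular a monomorphism, $\ker f=\ker e$, and I would first identify this kernel concretely: it is the sub-couple $K_f=(X\cap\ker f,\ \ker f)$, where $\ker f$ denotes the ordinary kernel of $f\colon Y\to Y'$ equipped with the norm of $Y$, and $X\cap\ker f$ is given the norm $\|u\|=\max(\|u\|_X,\|u\|_Y)$, with $k_f\colon K_f\hookrightarrow(X,Y)$ the inclusion. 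The point of $\mathcal{E}_0$-strictness is that the pair $(k_f,e)$ then lies in $\mathcal{E}_0$, i.e. splits (here I would cite the relevant splitting lemma from \cite{E}), so that $K_f$ is a \emph{complemented} sub-couple of $(X,Y)$: there is a morphism $P\colon(X,Y)\to K_f$ in $\mathcal{C}$ with $P\circ k_f=\mathrm{id}_{K_f}$.

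The heart of the argument — and the step I expect to be the main obstacle, in the sense of being where all the hypotheses are really used — is the interpolation identity for complemented sub-couples. For a complemented sub-couple the $K$-method commutes with passing to the sub-couple, which gives
\[
[X\cap\ker f,\ \ker f]_s \;=\; \ker f\,\cap\,[X,Y]_s
\]
(this is Theorem~1 in Section~1.17.1 of \cite{IN}). The care needed here is exactly that $P$ be a genuine morphism of GB-pairs, i.e. bounded simultaneously on $X$ and on $Y$ — this is what $\mathcal{E}_0$-strictness provides and what fails for an arbitrary morphism — and that the chosen norm on $X\cap\ker f$ is the one for which this identity holds literally; without complementation the left-hand side can be strictly smaller.

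Finally I would match this with $\ker(F_s f)$. By construction $F_s(f)$ is the restriction of $f\colon Y\to Y'$ to $[X,Y]_s$ with values in $[X',Y']_s$, so its kernel in $\mathcal{A}$ is precisely $\{u\in[X,Y]_s:f(u)=0\}=\ker f\cap[X,Y]_s$, and the inclusion of this subspace into $[X,Y]_s$ is exactly $F_s(k_f)$. Combining the two displays, $F_s(K_f)=[X\cap\ker f,\ \ker f]_s=\ker f\cap[X,Y]_s=\ker(F_s f)$ with $F_s(k_f)$ as the kernel map, so $F_s$ preserves the kernel of every $\mathcal{E}_0$-strict morphism; by Proposition~\ref{Inj-functors} it is injective. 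The additivity and direct-sum compatibility of $F_s$ that make this formulation meaningful are already supplied by Theorem~\ref{Interpolation} and Lemma~\ref{some}.
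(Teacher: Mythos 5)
Your proposal is correct and follows essentially the same route as the paper: reduce via Proposition~\ref{Inj-functors} to kernels of $\mathcal{E}_0$-strict morphisms, identify the kernel as the sub-couple $(X\cap\ker f,\ \ker f)$, use the splitting coming from $\mathcal{E}_0$-strictness to see it is complemented, and invoke Theorem~1 of Section~1.17.1 in \cite{IN} to commute interpolation with the complemented sub-couple. The only (harmless) additions are the explicit remark that $\ker f=\ker e$ and the spelled-out identification of $\ker(F_s f)$ with $\ker f\cap[X,Y]_s$, which the paper leaves implicit.
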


Now we have the KC-pair in the category $\mathcal{C}$
\begin{center}
$ ( W^{1,2} (U\cup V), L^2(U\cup V) )\to (W^{1,2} (U)\oplus W^{1,2} (V),  L^2 (U)\oplus L^2 (V)) \to (W^{1,2} (U\cap V), L^2 (U\cap V))  $.
\end{center}

And by the assumption of the existence of $(W^{1,2},W^{0,2})$-double extension, this sequence splits, so it is in the structure $\mathcal{E}_0$. Hence, by Lemma~\ref{Injective}, if we apply the functor $F_{s}$ (for any $s\in ]0,1[$) we get an exact sequence. Therefore, by $(3.4)$ we get the  exact sequence     
 $$0\to  W^{s,2} (U\cup V)\to [W^{1,2} (U)\oplus W^{1,2} (V),  L^2 (U)\oplus L^2 (V)]_{s} \to [W^{1,2} (U\cap V), L^2 (U\cap V)]_{s}.$$
By Lemma~\ref{some} and $(3.4)$ we can write it the following way
\begin{center}
$0\to  W^{s,2} (U\cup V)\to W^{s,2} (U)\oplus W^{s,2} (V) \to [W^{1,2} (U\cap V), L^2 (U\cap V)]_{s}  $.
\end{center}
Hence, we have the exactness of the  sequence
 \begin{center}
 $0\to  W^{s,2} (U\cup V)\to W^{s,2} (U)\oplus W^{s,2} (V) \to W^{s,2} (U\cap V) \to 0.  $
 \end{center}
 
 \begin{flushright}
 $\square$
 \end{flushright}

\begin{rmk}
The answer to the exactness of the sequence 
\begin{center}
 $0\to  W^{s,2} (U\cup V)\to W^{s,2} (U)\oplus W^{s,2} (V) \to W^{s,2} (U\cap V) \to 0  $
 \end{center}

is important. A positive affirmation of its exactness would imply the possibility of sheafifying Sobolev spaces. Conversely, a negative outcome would indicate that there exist no degree-independent extension operators from $W^{i,2}(\Omega)$ to $W^{i,2}(\mathbb{R}^n)$ (for $i\in \{[s], [s]+1 \}$) when $\Omega$ is a cuspidal domain.
\end{rmk}
\section{Further discussion}
\begin{itemize}
\item  It may be helpful to construct a broader exact structure $\mathcal{E}$ on the category $\mathcal{C}$ of GB-pairs, such that the KC-pair
\begin{center}
$(W^{1,2} (U\cup V), L^2(U\cup V)) \to (W^{1,2} (U)\oplus W^{1,2} (V), L^2 (U)\oplus L^2 (V)) \to (W^{1,2} (U\cap V), L^2 (U\cap V)) \dotsb (\star)$
\end{center}
\vspace{0.2cm}
is in $\mathcal{E}$ (when $U$, $V$, and $U\cup V$ are Lipschitz domains). For example, we can demonstrate that the maximal class of all KC-pairs is exact on $\mathcal{C}$ (although this is not always true, as seen in \cite{E}). However, a challenge arises when enlarging the class $\mathcal{E}$, as this also broadens the class of $\mathcal{E}$-strict morphisms. For instance, if we consider $\mathcal{E}$ as the maximal class, a morphism $f:(X,Y)\longrightarrow (X',Y')$ is $\mathcal{E}$-strict if and only if $f(Y)$ is closed in $Y'$, $f(X)$ is closed in $X'$, $f$ is open into $f(Y)$, and $f\mid_{X}$ is open into $X'$. Nonetheless, at present, no result establishes the compatibility of interpolation with the kernel of such morphisms. In \cite{JL} and \cite{SJ}, some sufficient conditions for morphisms to have a kernel that is compatible with interpolation are provided. However, connecting these conditions with our specific situation remains unclear. Hence, it might be possible to devise an exact structure for the category of GB-pairs $\mathcal{C}$ that encompasses the KC-pair $(\star)$ and simultaneously satisfies the conditions outlined in \cite{JL} and \cite{SJ} for the class of strict morphisms."  
\item  Sheafification of Sobolev spaces in the usual sense in higher dimensions is much more challenging and remains unclear to us. Therefore, this requires a sheafification in the derived sense, as was achieved for negative regularity by G. Lebeau \cite{L} (building upon the work of Guillermou-Schapira \cite{GS} and Parusiński \cite{Pa2}). The two-dimensional case can be summarized with the following idea: take $U$ and $V$ as two cuspidal domains in $X_{\mathcal{A}}(\mathbb{R}^2)$, such that $U \cup V$ and $U \cap V$ are also cuspidal (see Figure 15).
\begin{figure}[H]
  \centering
  \includegraphics[scale=0.4]{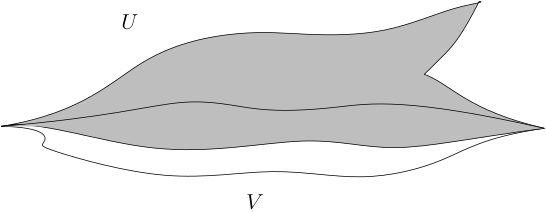}
  \label{casep2}
  \caption{$U$ and $V$.}   
\end{figure}
From the fact that we have enough space (from the metric point of view) outside $U$ and $V$, we can build two domains $\widehat{U} \in X_{\mathcal{A}}(\mathbb{R}^2)$ and $\widehat{V} \in X_{\mathcal{A}}(\mathbb{R}^2)$ with Lipschitz boundaries (see Figure 16) outside $U$ and $V$, such that $\widehat{U} \cup \widehat{V}$ has a Lipschitz boundary.
\begin{figure}[H]
  \centering
  \includegraphics[scale=0.4]{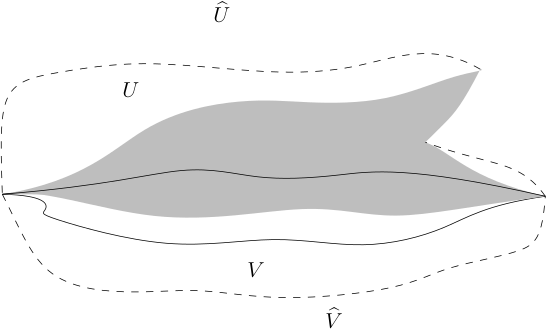}
  \label{casep2}
  \caption{$\widehat{U}$ and $\widehat{V}$.}   
\end{figure}
This gives a commutative diagram
\begin{center}
\begin{tikzcd}
            & 0                                 & 0                                 & 0                                 &   \\
0 \arrow[r] & W^{k,2}(U \cup V) \arrow[r] \arrow[u]             &  W^{k,2}(U) \oplus  W^{k,2}(V) \arrow[r] \arrow[u]             &  W^{k,2}(U \cap V) \arrow[r] \arrow[u]             & 0 \\
0 \arrow[r] & W^{k,2}(\widehat{U} \cup \widehat{V}) \arrow[r] \arrow[u] & W^{k,2}(\widehat{U}) \oplus  W^{k,2}(\widehat{V})  \arrow[r] \arrow[u] & W^{k,2}(\widehat{U} \cap \widehat{V}) \arrow[r] \arrow[u] & 0 \\
            & 0 \arrow[u]                       & 0 \arrow[u]                       & 0 \arrow[u]                       &  
\end{tikzcd}
\end{center}
  with the second exact line and exact rows. This implies the exactness of the first line.\\
  However, this flexibility is no longer true in higher dimensions, let's mention the following example (due to Parusi\'nski): take $U\in  X_{\mathcal{A}}(\mathbb{R}^3)$ and $U\in  X_{\mathcal{A}}(\mathbb{R}^3)$, both L-regular such that $U \cup V$ and $U \cap V$ are also L-regular (see the figure below). 
\begin{figure}[H]
  \centering
  \includegraphics[scale=0.4]{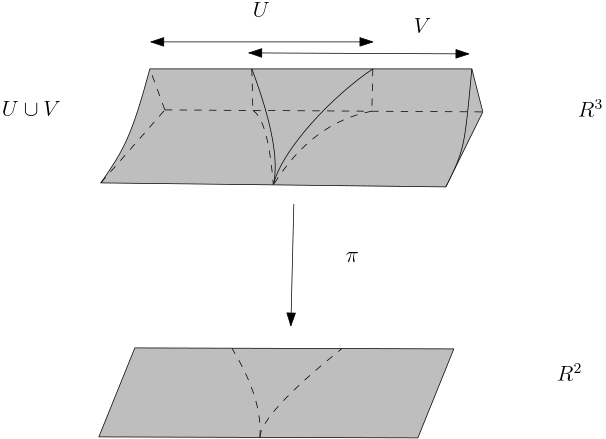}
\end{figure}
Then you can see directly that there is not enough space outside to build domains with Lipschitz boundaries and use Lemma~\ref{Proposition2.2}. This point is not clear and it is interesting to ask the following question:
\begin{question}
 For $k\in \mathbb{N}$, we define the presheaf
 \begin{center}
     $\mathcal{F}^k :X_{\mathcal{A}}(\mathbb{R}^n) \to \mathbb{C}$-vector spaces
 \end{center}
 such that for $U\in X_{\mathcal{A}}(\mathbb{R}^n)$, we have
 \begin{center}
     $\mathcal{F}^k (U)=\{ f\in L^2 (U) \; : \; f_{\mid K} \in W^{k,2} (K) \; for \; any  \; open \; L-regular \; K\subset U \}$.
 \end{center}
 Is $\mathcal{F}^k$ a sheaf on the site $X_{\mathcal{A}}(\mathbb{R}^n)$?
\end{question}
\end{itemize}

\newpage

\end{document}